\theoremstyle{definition}
\newtheorem{theorem}{Theorem}[section]
\theoremstyle{definition}
\newtheorem{lemma}{Lemma}[section]
\theoremstyle{definition}
\theoremstyle{definition}
\newtheorem* {notation}{Notation}
\theoremstyle{definition}
\newtheorem{proposition}{Proposition}[section]
\newtheorem{corollary}{Corollary}[section]
\newtheorem* {remark}{Remark}
\newtheorem{example}{Example}[section]
\theoremstyle{definition}
\theoremstyle{definition}
\newtheorem* {remarks}{Remarks}
\numberwithin{equation}{section}
\newcommand{\coadj}{ {\mathrm{coadj} } }
\newcommand{\coaux}{ {\mathrm{coaux} } }
\newcommand{\aux}{ {\mathrm{aux} } }
\newcommand{\adj}{ \mathrm{adj} }
\newcommand{\rml}{\mathrm R \setminus \mathrm L}
\newcommand{\larc}[1]{\hspace{-.4ex}\overset{#1}{\frown}\hspace{-.4ex}}
\def\({\left(}
\def\){\right)}
  \newcommand{\sP}{\mathscr{S}} \newcommand{\FF}{\mathbb{F}}  \newcommand{\CC}{\mathbb{C}}     \newcommand{\cP}{\mathcal{P}} \newcommand{\cA}{\mathcal{A}}
\newcommand{\cQ}{\mathcal{Q}} \newcommand{\cK}{\mathcal{K}}  \newcommand{\cC}{\mathcal{C}}
\newcommand{\cR}{\mathcal{R}}  \newcommand{\cS}{\mathcal{S}}\newcommand{\cT}{\mathcal{T}} 
 \def\NN{\mathbb{N}}
\def\tr{\mathrm{tr}}         \def\GL{\mathrm{GL}}   \def\Res{\mathrm{Res}}    \def\spanning{\textnormal{-span}}   
\def\Irr{\mathrm{Irr}}
   \newcommand{\fkn}{\mathfrak{n}}\newcommand{\supp}{\mathrm{supp}}
\newcommand{\One}{{1\hspace{-.14cm} 1}}
\newcommand{\fka}{\mathfrak{a}}
\def\fk{\mathfrak}
\def\barr{\begin{array}}
\def\earr{\end{array}}
\def\ba{\begin{aligned}}
\def\ea{\end{aligned}}
\def\be{\begin{equation}}
\def\ee{\end{equation}}
\def\cS{\mathcal{S}}
\renewcommand{\@makefnmark}{\mbox{\textsuperscript{}}}
\begin{document}
\title{Superclasses and supercharacters of normal pattern subgroups of the unipotent upper triangular matrix group}
\author{Eric Marberg\footnote{This research was conducted with government support under
the Department of Defense, Air Force Office of Scientific Research, National Defense Science
and Engineering Graduate (NDSEG) Fellowship, 32 CFR 168a.} \\ Department of Mathematics \\ Massachusetts Institute of Technology \\ \tt{emarberg@math.mit.edu}}
\date{}

\maketitle

\begin{abstract}
Let $U_n$ denote the group of $n\times n$ unipotent upper-triangular matrices over a fixed finite field $\FF_q$, and let $U_\cP$ denote the pattern subgroup of $U_n$ corresponding to the poset $\cP$.  This work examines  the superclasses and supercharacters, as defined by Diaconis and Isaacs, of the family of normal pattern subgroups of $U_n$.  After classifying all such subgroups, we describe an indexing set for their superclasses and supercharacters given by set partitions with some auxiliary data. We go on to establish a canonical bijection between the supercharacters of $U_\cP$ and certain $\FF_q$-labeled subposets of $\cP$.  This bijection generalizes the correspondence identified by Andr\'e and Yan between the supercharacters of $U_n$  and the $\FF_q$-labeled set partitions of $\{1,2,\dots,n\}$. At present, few explicit descriptions appear in the literature of the superclasses and supercharacters of infinite families of algebra groups other than $\{U_n : n \in \NN\}$.  This work significantly expands the known set of examples in this regard.
\end{abstract}

\section{Introduction}

Consider the group $U_n$ of $n\times n$ unipotent upper triangular matrices over a finite field $\FF_q$.  Classifying this group's irreducible representations is a well-known wild problem, provably intractable for arbitrary $n$.  Despite this, Andr\'e discovered a natural way of constructing certain sums of irreducible characters and certain unions of conjugacy classes of $U_n$, which together form a useful approximation to the group's irreducible representations \cite{An95, An01, An02}. In his 
Ph.D. thesis \cite{Ya01}, Yan showed how to replace the algebraic geometry of Andr\'e's construction with more elementary methods. This simplified theory proved to have both useful applications and a natural generalization.  In particular, Arias-Castro, Diaconis, and  Stanley 
\cite{ADS04} employed Yan's work in place of the usual irreducible character theory to study random walks on $U_n$.  

Later,  Diaconis and  Isaacs \cite{DI06} axiomatized the approximating approach to define the notion of a supercharacter theory for a finite group, in which supercharacters replace irreducible characters and superclasses replace conjugacy classes.  In addition, they generalized Andr\'e's original construction to define a supercharacter theory for algebra groups, a family of groups of the form $\{1 + X : X \in \fkn \}$ where $\fkn$ is a  nilpotent (finite-dimensional, associative) $\FF_q$-algebra.  In the resulting theory, restrictions and tensor products of supercharacters decompose as nonnegative integer combinations of supercharacters. 
Furthermore, there is a notion of superinduction that is dual to restriction of supercharacters.  The references \cite{MT09, T09, T09-2, TV09} study these aspects of Diaconis and Isaacs's supercharacter theory in detail.

One of the primary motivations for these developments is the remarkable combinatorial structure of the superclasses and supercharacters of $U_n$.  Analogous to the symmetric group, where we replace partitions with set-partitions, there is a natural bijection
\be\label{class1} \barr{ccc} \left \{ \barr{c} \text{Superclasses and} \\ \text{Supercharacters of $U_n$} \earr \right\} & \leftrightarrow & \left \{ \barr{c} \text{$\FF_q$-labeled set partitions} \\ \text{of $\{1,2,\dots,n\}$} \earr \right\}\earr.\ee  Thus, Andr\'e's approximation to the representation theory of $U_n$ is not merely computable, but lends itself as a subject of interest in its own right.  Few analogues of (\ref{class1}) appear to be known for families of groups other than $U_n$.  Thiem and Venkateswaran \cite{TV09} provide one example, describing the superclasses and supercharacters of a normal series of subgroups interpolating between $U_n$ and $U_{n-1}$.   

The purpose of this work is introduce another family of examples, in particular by generalizing the classification (\ref{class1}) to all normal pattern subgroups of $U_n$.   A \emph{pattern group} is a subgroup $U_\cP \subset U_n$ of the form 
\[ U_\cP = \{ g \in U_n : g_{ij}=0\text{ if $i<j$ and }(i,j) \notin \cP \}\] where $\cP \subset \{(i,j) : 1\leq i < j \leq n\}$ is a set of positions above the diagonal.  In order for the set $U_\cP$ to form a group, $\cP$ must be a \emph{poset} on $[n] = \{1,2,\dots,n\}$; i.e., a set of positions such that $(i,j),(j,k) \in \cP$ implies $(i,k) \in \cP$.  The set of normal pattern subgroups of $U_n$ is in bijection with the set of nilpotent two-sided ideals in the algebra $\fk t_n$ of upper triangular matrices over $\FF_q$.  Both sets are naturally parametrized by Dyck paths with $2n$ steps, and hence have order $C_n = \frac{1}{n+1}\binom{2n}{n}$, the $n$th Catalan number.  We determine the superclasses and supercharacters of these groups (Theorems \ref{normal-reps} and \ref{co-normal-reps}) by constructing explicit bijections of the form
\be \label{class2} \Bigl\{ \text{Superclasses/Supercharacters of $U_\cP \vartriangleleft  U_n$} \Bigr\}
 \quad \leftrightarrow \quad  
\left\{ \barr{c} \text{$\FF_q$-labeled set partitions of} \\ \text{$[n]$ with some auxiliary data} \earr\right\}
 .\ee Unlike for $U_n$, the natural indexing sets for the superclasses and supercharacters of an arbitrary normal pattern subgroup $U_\cP\vartriangleleft U_n$ do not coincide, so we require two maps to fully define (\ref{class2}).    In the supercharacter case, the map (\ref{class2}) has a more explicit, combinatorial interpretation (Theorem \ref{bijections}) provided by the correspondence 
\be \label{class3} \Bigl\{ \text{Supercharacters of $U_\cP \vartriangleleft  U_n$} \Bigr\}
 \quad \leftrightarrow \quad  
\Bigl\{  \text{Certain $\FF_q$-labeled subposets of $\cP$}\Bigr\} .\ee  This bijection generalizes (\ref{class1}) by providing a neat combinatorial indexing set for the supercharacters of $U_\cP$, which we can defined in graph theoretic terms involving only the poset $\cP$.

Section \ref{prelim-section} provides background information on supercharacter theories, pattern groups, and relevant combinatorial constructions.  In Section \ref{normal-section} we classify all normal pattern subgroups of $U_n$ and describe a strategy for determining these groups' superclasses and supercharacters.  Sections \ref{superclass-section} and \ref{supercharacter-section} carry out this strategy to construct the bijections (\ref{class2}).  As an application, we show that if $U_\cP\vartriangleleft U_n$ then each supercharacter of $U_\cP$ is given by a product of irreducible supercharacters.  In Section \ref{posets-section}, we show how each supercharacter of $U_\cP$ corresponds to a unique labeled subposet of $\cP$, and then characterize these posets to define the correspondence (\ref{class3}).

\subsection*{Acknowledgements}

I  thank Nat Thiem and the anonymous referees for their helpful remarks and suggestions.

\section{Preliminaries}\label{prelim-section}

This section reviews the definition of an abstract supercharacter theory, then introduces a specific supercharacter theory for pattern groups.  The final subsection presents our definition of labeled set partitions, and establishes some notational conventions.

\subsection{Abstract Supercharacter Theories} \label{abstract-sc}

Let $G$ be a finite group.  We use the word \emph{character} to mean any function $G\rightarrow \CC$ of the form $g \mapsto \tr\( \rho(g)\)$ where $\rho$ is a representation of $G$ in a complex finite dimensional vector space.  As defined originally by Diaconis and Isaacs \cite{DI06}, a \emph{supercharacter theory} of $G$ is a pair $(\cS,\cS^\vee)$, where $\cS$ is a set of characters of $G$ and  $\cS^\vee$ is a partition of the elements of $G$,  satisfying the following conditions:
\begin{enumerate}
\item[(1)] $|\cS| = |\cS^\vee|$.  
\item[(2)] Each irreducible character of $G$ appears as a constituent of exactly one $\chi \in \cS$.
\item[(3)] Each $\chi \in \cS$ is constant on each set $\cK \in \cS$.
\item[(4)] The conjugacy class $\{1\} \in \cS^\vee$.
\end{enumerate} 
We call $\cS^\vee$ the set of \emph{superclasses} and $\cS$ the set of \emph{supercharacters} of the supercharacter theory $(\cS,\cS^\vee)$.  Each superclass is a union of conjugacy classes, and each supercharacter $\chi \in\cS$ is equal to a positive constant times  $\sum_{\psi  \in \Irr(\chi)} \psi(1) \psi$ where $\Irr(\chi)$ denotes the set irreducible constituents of $\chi$ \cite[Lemma 2.1]{DI06}.  By condition (2), the sets $\Irr(\chi)$ for $\chi \in \cS$ form a partition of the set $\Irr(G)$ of irreducible characters of $G$.  Consequently, the supercharacters $\cS$ form an orthogonal basis for the space of \emph{superclass functions}, the complex valued functions on $G$ which are constant on the superclasses $\cS^\vee$.

Every finite group has two supercharacter theories: the usual irreducible character theory and the trivial supercharacter theory with $\cS = \{ \One, \rho_G - \One\}$ and $\cS^\vee = \{ \{1\}, G-\{1\}\}$, where $\rho_G$ denotes the character of the regular representation of $G$.  The preprint \cite{H09} discusses several methods of constructing additional supercharacter theories of an arbitrary finite group.

In this work, we study a particular supercharacter theory introduced by Diaconis and Isaacs \cite{DI06} as a generalization of the work of Andr\'e \cite{ An95} and Yan \cite{Ya01}.  This nontrivial supercharacter theory serves as a useful approximation for the irreducible characters of groups whose representations are poorly understood, and displays some beautiful combinatorial properties.  Before introducing this supercharacter theory, we must define \emph{pattern groups,} the family of groups to which the theory applies.  This is the goal of the next section.

\subsection{Posets and Pattern Groups}

\def\cov{\mathrm{cov}}

Fix a positive integer $n$ and let $[n] = \{1,2,\dots,n\}$.  We denote by $[[n]]$ the set of positions above the diagonal in an $n\times n$ matrix:
\[ [[n]] = \{ (i,j) : 1\leq i < j \leq n\} .\]  By a \emph{poset} $\cP$ on $[n]$, we mean a subset $\cP \subset[[n]]$ such that if $(i,j), (j,k) \in \cP$ then $(i,k) \in \cP$.  
The poset 
 $\cP$ corresponds to the strict partial ordering $\prec$ of the set $[n]$ defined by setting $i \prec j$ if and only if $(i,j) \in \cP$.  

We say that $(i,k) \in \cP$ is a \emph{cover} of $\cP$ if there is no $j$ such that $(i,j),(j,k) \in \cP$, and we denote the set of covers of $\cP$ by $\cP^\cov$.    The sets $\cP$ and $\cP^\cov$ then uniquely determine each other: namely, 
\be\label{covers2posets} (i,k) \in \cP\quad \text{if and only if}\quad \exists\ (j_1,j_2), (j_2, j_3), \dots , (j_{r-1}, j_r) \in \cP^\cov\text{ with $i=j_1$ and $k=j_r$.}\ee  We can visually depict $\cP$ via its Hasse diagram, which is the directed graph whose vertices are $1,2,\dots,n$ and whose directed edges are the ordered pairs $(i,j)\in \cP^\cov$.  For example, we can define the poset $\cP = \{ (1,3), (1,4), (2,3), (2,4), (3,4) \}$ on $n=4$ by writing
\[ \cP\ =\  \xy<0.25cm,0.8cm> \xymatrix@R=.3cm@C=.3cm{
  &  4  \\
  &  3  \ar @{-} [u]   \\
1 \ar @{-} [ur]   & 2 \ar @{-} [u] 
}\endxy \]

Fix a finite field $\FF_q$ with $q$ elements, and let $U_n$ denote the group of $n\times n$ upper-triangular matrices with ones on the diagonal and entries in $\FF_q$.  Given a poset $\cP$ on $[n]$, the \emph{pattern group} $U_\cP$ is the subgroup $U_n$ given by 
\[ U_\cP = \{ g \in U_n : g_{ij} =0\text{ if $i<j$ and $(i,j)\notin \cP$} \}.\]  
The group $U_n$ is the pattern group corresponding to the poset $\cP = [[n]]$.   

\begin{remarks}
\begin{enumerate}
\item[]
\item[(a)] Note that under our definitions, the set $\cP$ can serve as a poset on $[n]$ for any sufficiently large integer $n$.  Thus, implicit in the notation $U_\cP$ is the choice of a dimension $n$ corresponding to $\cP$.   Different choices of $n$ result in canonically isomorphic pattern groups, however.
\item[(b)] Examples of pattern groups include the unipotent radicals of rational parabolic subgroups of the finite general linear groups $\GL_n(\FF_q)$.  One can describe many group theoretic structures of pattern groups in terms of the poset $\cP$, such as the center, Frattini subgroups, coset representatives, etc.\ (see \cite{DT09,MT09} for examples).
 \end{enumerate}
\end{remarks}

\subsection{Superclasses}

Let $\fkn_n$ denote the nilpotent $\FF_q$-algebra of strictly upper triangular $n\times n$ matrices with entries in $\FF_q$.  For any matrix $X$, let 
\[\supp(X) = \{ (i,j) : X_{ij} \neq 0\}\] denote the set of positions in $X$ with nonzero entries.  Now, given a poset $\cP$ on $[n]$, define $\fkn_\cP \subset \fkn_n$ as the nilpotent $\FF_q$-algebra 
\[ \fkn_\cP = U_\cP - 1 =  \{ X \in \fkn_n : \supp(X)\subset \cP \}.\]  

The group $U_\cP$ acts on the algebra $\fkn_\cP$ on the left and right by multiplication.  The map $X\mapsto 1 +X$ gives a bijection $\fkn_\cP \rightarrow U_\cP$, and we define the \emph{superclasses} of $U_\cP$ to be the sets formed by applying this map to the two-sided $U_\cP$-orbits in $\fkn_\cP$.  The superclass of $U_\cP$ containing $g \in U_\cP$, denoted $\cK_\cP^g$, is therefore the set
\[ \label{superclass-defin} \cK_\cP^g = \{ 1+x(g-1)y : x,y \in U_\cP\}.\]  Each superclass is a union of conjugacy classes, and one superclass consists of just the identity element of $U_\cP$.

\subsection{Supercharacters}

Given a poset $\cP$ on $[n]$, let $\fkn_\cP^*$ denote the dual space of $\FF_q$-linear functionals $\lambda : \fkn_\cP \rightarrow \FF_q$.  Similarly, let $\fkn_n^*$ denote the dual space of $\fkn_n$.  Throughout this work, we distinguish linear functionals by their domains, and hence do not view $\lambda \in \fkn_\cQ^*$ as an element of $\fkn_\cP^*$ when $\fkn_\cP\subset \fkn_\cQ$.  This precaution avoids some potential ambiguities later.

One of the benefits of working with pattern groups is that we have a canonical way of identifying $\fkn_\cP^*$ with $\fkn_\cP$. Specifically, we associate $\lambda \in \fkn_\cP^*$ with the matrix in $\fkn_\cP$ whose $(i,j)$th entry is $\lambda_{ij}$, where we define
\[ \lambda_{ij} = \begin{cases} \lambda(e_{ij}), & \text{if $(i,j) \in \cP$}, \\ 0,&\text{otherwise}.\end{cases}\]
Here $e_{ij}$ denotes the elementary  $n\times n$ matrix with 1 in entry $(i,j)$ and 0 in all other entries.   This identification gives a vector space isomorphism $\fkn_\cP^* \cong \fkn_\cP$, and provides a convenient way of defining linear functionals on $\fkn_\cP$ as matrices.  Following the convention for matrices, given $\lambda \in \fkn_\cP^*$, we let \[\supp(\lambda) = \{ (i,j)\in \cP : \lambda_{ij} \neq 0\}.\]  

We have left and right actions of the group $U_\cP$ on the vector space $\fkn_\cP^*$ given by defining $g\lambda$ and $\lambda g$ for $g \in U_\cP$ and $\lambda \in \fkn_\cP^*$ to be the functionals with
\[\label{star-action} g\lambda(X) = \lambda(g^{-1}X)\qquad\text{and}\qquad \lambda g(X) = \lambda(Xg^{-1})\qquad\text{for }X \in \fkn_\cP.\] These actions are compatible in the sense that  $(g\lambda)h = g(\lambda h)$ for $g,h \in U_\cP$, and $\lambda \in \fkn_\cP^*$.  Hence  we may remove all parentheses without introducing ambiguity.  Given $\lambda \in \fkn_\cP^*$, we denote the corresponding left, right, and two-sided $U_\cP$-orbits by $U_\cP\lambda$, $\lambda U_\cP$, $U_\cP\lambda U_\cP$.

Fix a nontrivial group homomorphism $\theta : \FF_q^+\rightarrow \CC^\times$.  The \emph{supercharacters} of $U_\cP$ are the functions $\chi^\lambda_\cP : U_\cP \rightarrow \CC$ indexed by $\lambda \in \fkn_\cP^*$, defined by the formula  
\be\label{formula}\chi^\lambda_\cP(g) = \frac{|U_\cP \lambda |}{|U_\cP \lambda U_\cP|} \sum_{\mu \in U_\cP\lambda U_\cP} \theta\circ \mu(g-1),\qquad\text{for }g \in U_\cP.\ee 
It follows from this definition that supercharacters are constant on superclasses.  In addition, we have $\chi_\cP^\lambda = \chi_\cP^\mu$ if and only if $\mu \in U_\cP \lambda U_\cP$.  $\chi_\cP^\lambda$ is the character of the left $U_\cP$-module 
\[ V^\lambda_\cP = \CC\spanning\{ v_\mu : \mu \in U_\cP \lambda \},\qquad\text{where }gv_\mu = \theta\circ \mu\(1-g^{-1}\) v_{g\mu}\text{ for }g\in U_\cP.\]  
For $\lambda, \mu \in \fkn_\cP^*$, 
\[ \langle \chi^\lambda_\cP, \chi^\mu_\cP \rangle_{U_\cP} =\begin{cases} |U_\cP\lambda \cap \lambda U_\cP|,&\text{if }\mu \in U_\cP\lambda U_\cP, \\ 0,&\text{otherwise,}\end{cases} 
\quad\text{where }\langle \chi, \psi \rangle_{U_\cP} = \frac{1}{|U_\cP|} \sum_{g\in U_\cP} \chi(g)\overline{\psi(g)}.\] 
Thus, $\chi_\cP^\lambda$ is irreducible if and only if $U_\cP \lambda \cap \lambda U_\cP  = \{\lambda\}$, and distinct supercharacters are orthogonal.

\begin{notation}
If the context is clear, we may drop the subscript and write $\cK^g$, $\chi^\lambda$ to denote the superclass and supercharacter $\cK^g_\cP$, $\chi_\cP^\lambda$.
\end{notation}

\begin{remarks}
\begin{enumerate}
\item[]
\item[(a)] Diaconis and Isaacs \cite{DI06} first defined this set of superclasses and supercharacters for a larger family of groups known as \emph{algebra groups}, generalizing the work of Andr\'e and Yan \cite{An95,Ya01} which applied only to $U_n$.  Diaconis and Thiem derive a more explicit supercharacter formula in \cite{DT09}.

\item[(b)]    While \cite{DI06} defines the module $V^\lambda_\cP$ abstractly, we can construct it as an explicit submodule of $\CC U_\cP$ by defining
$v_\mu  =  \sum_{x \in U_\cP} \theta\circ \mu(1-x)x \in \CC U_\cP$ for $\mu \in \fkn_\cP^*$.  

\item[(c)] The numbers of superclasses and supercharacters are equal to the numbers of two-sided $U_\cP$ orbits in $\fkn_\cP$ and $\fkn_\cP^*$, and these are the same by Lemma 4.1 in \cite{DI06}.  Furthermore, it is clear from the formula (\ref{formula}) that the character $\rho_{U_\cP}$ of the regular representation of $U_\cP$ decomposes as 
\[ \rho_{U_\cP} = \sum_\lambda \frac{|U_\cP \lambda U_\cP|}{|U_\cP \lambda|} \chi_\cP^\lambda\] where the sum is over a set of representatives $\lambda$ of the two-sided $U_\cP$ orbits in $\fkn_\cP^*$.  Thus, each irreducible character of $U_\cP$ appears as a constituent of a unique supercharacter.  Consequently, the supercharacters and superclasses defined above indeed form a supercharacter theory of $U_\cP$ in the sense of Section \ref{abstract-sc}.

\end{enumerate}
\end{remarks}

\subsection{$\FF_q$-Labeled Set Partitions and the Supercharacters of $U_n$}\label{sect2}

The supercharacter theory described in the preceding section arose as a generalization of a specific attempt to approximate the irreducible characters of $U_n$.  The classification of this group's conjugacy classes and irreducible representations is  a wild problem, but the classification of its superclasses and supercharacters has a highly satisfactory combinatorial answer in terms of \emph{$\FF_q$-labeled set partitions.}  We define these objects below, and then describe how they correspond to the superclasses and supercharacters of $U_n$.

Fix a nonnegative integer $n$.  A \emph{set partition} $\lambda = \{\lambda_1, \lambda_2, \dots, \lambda_\ell\}$ of $[n]$ is a set of disjoint, nonempty sets $\lambda_i \subset \{1,\dots,n\}$ such that $\bigcup_i \lambda_i = [n]$. The sets $\lambda_i$ are called the \emph{parts} of $\lambda$.  We view each part as a finite increasing sequence of positive integers, and typically abbreviate $\lambda$ by writing the numbers in each part from left to write, separating successive parts with the ``$|$'' symbol.  For example, we write $\lambda = \{\{1,2\}, \{3\},\{4,7,8\}, \{5,6\}\}$ as $\lambda = 12|3|478|56$.  

The \emph{support} of a set partition $\lambda = (\lambda_1,\lambda_2,\dots,\lambda_\ell)$ is the set 
\[ \supp(\lambda) = \{ (i,j) : \text{$i<j$ and for some $k$, we have $i,j \in \lambda_k$ and $i<x<j$ only if $x \notin \lambda_k$}\}.\] In other words, $(i,j) \in \supp(\lambda)$ if and only if $i<j$ are consecutive integers in some part of $\lambda$.  For example, the support of $\lambda = 12|3|478|56$ is $\supp(\lambda) = \{(1,2), (4,7),(7,8),(5,6)\}$.  The set $\supp(\lambda)$ is the same as the set $\cA(\lambda)$ defined in \cite{T09-2} and $A(\lambda)$ defined in \cite{T09}.

An \emph{$\FF_q$-labeled set partition} is a set partition $\lambda$ with a map $\supp(\lambda) \rightarrow \FF_q^\times$ which labels each element of the support with a nonzero element of $\FF_q$.  We represent a labeled set partition by writing the set partition $\lambda$ as above, and then replacing each supported point ``$ij$'' with ``$i\larc{t}j$'' where $t \in \FF_q^\times$ is the label assigned to $(i,j)$.  \
For example, the $\FF_q$-labeled set partitions corresponding to the (unlabeled) set partition $12|3|478|56$ are of the form
 \[\lambda = 1\larc{a}2|3|4\larc{b}7\larc{c}8|5\larc{d}6,\qquad\text{where }a,b,c,d \in \FF_q^\times.\]
For each $(i,j) \in \supp(\lambda)$, let $\lambda_{ij} \in \FF_q^\times$ denote the corresponding label, and for each $(i,j) \notin \supp(\lambda)$ let $\lambda_{ij} = 0$.  This notation naturally assigns to a labeled set partition $\lambda$ of $[n]$ a strictly upper triangular $n\times n$ matrix over $\FF_q$; namely, the matrix whose $(i,j)$th entry is $\lambda_{ij}$.  For example, 
\[ 1\larc{r}3\larc{s}5|2\larc{t}4 \quad\text{corresponds to}\quad \(\begin{array}{ccccc} 
0 & 0 & r & 0 & 0 \\
  & 0 & 0 & t & 0 \\
  &   & 0 & 0 & s \\
  &   &   & 0 & 0 \\ 
  &   &   &   & 0 
\end{array}\).\]  This correspondence defines a bijection
\[ \barr{ccc} \left\{ 
\barr{c} 
\text{$\FF_q$-labeled set} \\ \text{partitions of $n$}
\earr
\right\} & \leftrightarrow &
\left\{ 
\barr{c} 
\text{Matrices in $\fkn_n$ with at most one nonzero} \\ \text{entry in each row and column. }
\earr
\right\}.
\earr\]

 \def\sP{\mathscr{S}}
 
We can view the $n\times n$ upper triangular matrix defined by a labeled set partition of $[n]$ as an element of either the algebra $\fkn_{n}$ or  the dual space $\fkn_{n}^*$.  To distinguish between these two identifications, we adopt the following notation: let 
\[ \ba \sP_n &= \{ X \in \fkn_n : \supp(X)\text{ contains at most one position in each row and column} \}, \\
 \sP_n^* &= \{ \lambda \in \fkn_n^* : \supp(\lambda)\text{ contains at most one position in each row and column} \}.\ea\] 
 More generally, given any poset $\cP$ on $[n]$, define 
 \[ \ba \sP_\cP &= \{ X \in \fkn_\cP : \supp(X)\text{ contains at most one position in each row and column} \}, \\
 \sP_\cP^* &= \{ \lambda \in \fkn_\cP^* : \supp(\lambda)\text{ contains at most one position in each row and column} \}.\ea\] We refer to elements of both of these sets as $\FF_q$-labeled set partitions of $[n]$.  Observe that the support of a set partition is well-defined and consistent, in the sense that $\supp(\lambda)$ defines the same set, whether $\lambda$ is viewed as a set partition, a matrix, or a linear functional.
  
Yan showed in \cite{Ya01} that the superclasses and supercharacters of $ U_n$ are indexed by the set of all $\FF_q$-labeled set partitions of $[n]$.  In particular, the maps
\be\label{U_n-classification} \barr{ccc}\sP_n & \to &  \Bigl\{ \text{Superclasses of $ U_n$} \Bigr\} \\ 
\lambda & \mapsto & \cK_{}^{1+\lambda} \earr
\qquad\text{and}\qquad
\barr{ccc}\sP_{n}^* & \to &  \Bigl\{\text{Supercharacters of $ U_n$} \Bigr\} \\ 
\lambda & \mapsto & \chi_{}^\lambda. \earr
\ee  are bijections.  Andr\'e proved the character result earlier from a more geometric perspective in \cite{An95}.  The indexing sets $\sP_n$ and $\sP_n^*$ provide the following  simple supercharacter formula for $U_n$:
\[ \chi^\lambda(1 + \mu) =\begin{cases} \displaystyle \prod_{(i,l) \in \supp(\lambda)} \frac{q^{l-i-1} \theta(\lambda_{il} \mu_{il})}{q^{|\{ (j,k) \in \supp(\mu) : i<j<k<l \}|}}  ,&\barr{l} \text{if }(i,j),(j,k) \notin\supp(\mu)\text{ whenever } \\ \text{$i<j<k$ and $(i,k) \in \supp(\lambda)$,}\earr  \\ 0,&\text{otherwise,}\end{cases}\]
for $\lambda \in \sP_n^*$ and $\mu \in \sP_n$.  It is evident from this formula that for each $\lambda \in \sP_n^*$, the supercharacter $\chi^\lambda$ of $U_n$ factors as a product of irreducible supercharacters indexed by $\nu \in \sP_n^*$ with $|\supp(\nu)| \leq 1$. 

The primary intent of this work is to generalize this classification to  all normal pattern subgroups of $U_n$.  In order to do this, we first require some understanding of what such pattern groups look like; the next section provides this information.

\section{Normal Pattern Subgroups of $ U_n$}\label{normal-section}

Given a poset $\cP$ on $[n]$, we say that $\cP$ is normal in $[[n]]$ and write $\cP\vartriangleleft [[n]]$ if 
\be\label{normal-def} \text{$(i,l) \notin \cP$ implies  $(j,k)\notin \cP$,}\qquad\text{for all $1\leq i\leq j < k \leq l \leq n$.}\ee  Taking the contrapositive, $\cP \vartriangleleft [[n]]$ if and only if $(j,k) \in \cP$ implies $(i,l)\in \cP$ for all $1\leq i \leq j < k \leq l\leq n$.  Of course, the reason for adopting this notation has much to do with the following.

\begin{lemma}\label{normal-in-U_n} If $\cP$ is a poset on $[n]$, then $U_\cP \vartriangleleft  U_n$ 
if and only if $\cP\vartriangleleft[[n]]$.
 \end{lemma}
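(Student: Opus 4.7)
The plan is to prove both directions by exploiting the fact that $U_\cP$ is built from $\cP$ in an entry-by-entry fashion, so that conjugation by elementary matrices probes the normality condition (\ref{normal-def}) directly.

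For the ``if'' direction, I will suppose $\cP \vartriangleleft [[n]]$ and show that $xgx^{-1} \in U_\cP$ for arbitrary $x \in U_n$ and $g \in U_\cP$. Writing $g = 1 + G$ with $G \in \fkn_\cP$ and using that $x, x^{-1}$ are upper triangular, the $(a,b)$ entry of $xgx^{-1}-1 = xGx^{-1}$ is a sum of terms $x_{aj}\,G_{jk}\,(x^{-1})_{kb}$ with $a \le j < k \le b$. If this entry is nonzero and $a<b$, then some $(j,k) \in \supp(G) \subseteq \cP$ satisfies $a \le j < k \le b$, and (\ref{normal-def}) immediately forces $(a,b) \in \cP$. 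Hence $\supp(xgx^{-1}-1) \subseteq \cP$, so $xgx^{-1} \in U_\cP$.

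For the ``only if'' direction, I will suppose $U_\cP \vartriangleleft U_n$ and verify that $(j,k) \in \cP$ together with $i \le j < k \le l$ imply $(i,l) \in \cP$. Starting from $g = 1 + e_{jk} \in U_\cP$, I will conjugate by $1 + e_{ij} \in U_n$ on the left (when $i<j$) and by $1 + e_{kl} \in U_n$ on the right (when $k<l$). Using the identity $e_{ab}e_{cd} = \delta_{bc}e_{ad}$, a direct calculation shows the first conjugation produces $1 + e_{jk} + e_{ik}$ and the second produces $1 + e_{jk} + e_{jl}$. Then I will iterate: first apply the right conjugation to pass from $(j,k) \in \cP$ to $(j,l) \in \cP$, and then apply the left conjugation to pass from $(j,l) \in \cP$ to $(i,l) \in \cP$; normality of $U_\cP$ ensures that each intermediate matrix lies in $U_\cP$, forcing the new support entry into $\cP$. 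The degenerate boundary cases $i = j$ or $k = l$ are handled by simply omitting the corresponding conjugation.

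The argument is very short, and the only care required is in verifying the elementary-matrix computations and in tracking the boundary cases where the pairs $(i,j)$ or $(k,l)$ coincide (so that the elementary matrices $e_{ij}$ or $e_{kl}$ degenerate). I do not anticipate any serious obstacle.
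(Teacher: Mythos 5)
Your proof is correct, and it uses what is surely the intended argument. The paper itself does not prove this lemma but cites it as a special case of Lemma 4.1 in \cite{M09}, so there is no in-paper proof to compare against; your two-directional argument (conjugation of a generic $g=1+G$ by $x\in U_n$ for ``if'', and conjugation of $1+e_{jk}$ by the transvections $1+e_{ij}$ and $1+e_{kl}$ for ``only if'') is self-contained, the elementary-matrix computations check out (e.g.\ $(1+e_{ij})(1+e_{jk})(1-e_{ij}) = 1+e_{jk}+e_{ik}$ and $(1-e_{kl})(1+e_{jk})(1+e_{kl}) = 1+e_{jk}+e_{jl}$), and the boundary cases $i=j$ or $k=l$ are handled correctly by omitting the corresponding conjugation.
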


This result appears in a more general form as Lemma 4.1 in \cite{M09}.  Its proof leads to the following pair of corollaries, which appear as Lemma 3.2 in \cite{MT09} and Corollary 4.1 in \cite{M09}.



\begin{corollary}\label{supernormal}
If $\cP$ is poset with $\cP\vartriangleleft [[n]]$, then $gXh \in \fkn_\cP$ for all $g,h \in U_n$ and $X \in \fkn_\cP$.  Thus $\fkn_\cP$ is a two-sided ideal in $\fkn_n$ and $U_\cP$ is a union of superclasses of $U_n$.
\end{corollary}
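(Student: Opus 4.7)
The plan is to reduce all three assertions of the corollary to a single support-calculation using the normality condition (\ref{normal-def}). The only thing that has to be checked by hand is that $\supp(gXh) \subset \cP$ whenever $g,h \in U_n$ and $X \in \fkn_\cP$; the ideal statement and the superclass statement will both follow formally from this.

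First I would expand matrix entries. Fix $(i,l)$ with $1 \leq i \leq l \leq n$ and compute
\[
(gXh)_{il} = \sum_{j,k} g_{ij} X_{jk} h_{kl}.
\]
Since $g, h \in U_n$ are unipotent upper triangular, any term with $g_{ij} h_{kl} \neq 0$ forces $i \leq j$ and $k \leq l$. Since $X \in \fkn_\cP$, a nonzero $X_{jk}$ forces $(j,k) \in \cP$, and in particular $j < k$. Thus any summand contributing a nonzero quantity to $(gXh)_{il}$ has indices satisfying $1 \leq i \leq j < k \leq l \leq n$ with $(j,k) \in \cP$. By the defining property (\ref{normal-def}) of $\cP \vartriangleleft [[n]]$ (in its contrapositive form), this gives $(i,l) \in \cP$. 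Hence $\supp(gXh) \subset \cP$, i.e.\ $gXh \in \fkn_\cP$.

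Next I would derive the ideal claim. For $Y \in \fkn_n$, the element $1 + Y$ lies in $U_n$, so applying the previous step with $g = 1 + Y$ and $h = 1$ yields $X + YX = (1+Y) X \cdot 1 \in \fkn_\cP$; since $X \in \fkn_\cP$, this gives $YX \in \fkn_\cP$. The argument with $g = 1$ and $h = 1 + Y$ shows $XY \in \fkn_\cP$ analogously, so $\fkn_\cP$ is a two-sided ideal of $\fkn_n$. Finally, by the definition of superclasses recalled in the preceding subsection, the superclass of $U_n$ containing $1 + X$ is $\{1 + gXh : g, h \in U_n\}$; when $X \in \fkn_\cP$, the support result shows every element of this set lies in $U_\cP$. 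Hence $U_\cP$ is a union of $U_n$-superclasses.

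The only nonroutine input is the reduction to the order condition $i \leq j < k \leq l$ followed by an appeal to (\ref{normal-def}); once the bookkeeping of indices is correct there is no genuine obstacle, and this is likely why the authors relegate the result to a corollary of Lemma \ref{normal-in-U_n}.
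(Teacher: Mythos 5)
Your proof is correct and is essentially the argument the paper has in mind: the paper omits an explicit proof, stating only that the corollary follows from the proof of Lemma \ref{normal-in-U_n} (and citing \cite{MT09} and \cite{M09}), and that proof rests on exactly the support computation you give, namely that a nonzero term in $(gXh)_{il}=\sum_{j,k} g_{ij}X_{jk}h_{kl}$ forces $i\leq j<k\leq l$ with $(j,k)\in\cP$, whence $(i,l)\in\cP$ by the contrapositive of (\ref{normal-def}). The deductions of the ideal property and the union-of-superclasses statement from this support fact are also exactly as intended.
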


\def\fkt{\mathfrak{t}}

This property  will allow us to use the classification (\ref{U_n-classification}) to great advantage in our analysis of the actions of $U_\cP$ on $\fkn_\cP$ and $\fkn_\cP^*$.  For the second corollary, let $\fkt_n$ denote the algebra of $n\times n$ upper triangular matrices over $\FF_q$.

\begin{corollary} A subset $\fka \subset \fkt_n$ is a nilpotent two-sided ideal if and only if $\fka = \fkn_\cP$ for some poset $\cP\vartriangleleft [[n]]$.
\end{corollary}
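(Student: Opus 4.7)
The plan is to prove both implications by manipulating the elementary matrices $e_{ij}$, exploiting the identity $e_{ii} X e_{jj} = X_{ij} e_{ij}$ valid for any $X \in \fkt_n$.

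For the ``if'' direction, suppose $\cP \vartriangleleft [[n]]$. Nilpotence of $\fkn_\cP$ is immediate from $\fkn_\cP \subset \fkn_n$. To show that $\fkn_\cP$ is a two-sided ideal in $\fkt_n$, fix $X \in \fkn_\cP$ and $T \in \fkt_n$ and inspect $(TX)_{ik} = \sum_j T_{ij} X_{jk}$: a nonzero summand forces $i \leq j < k$ with $(j,k) \in \cP$, whence the normality condition (\ref{normal-def}) gives $(i,k) \in \cP$. Hence $\supp(TX) \subset \cP$, i.e., $TX \in \fkn_\cP$, and the argument for $XT$ is symmetric.

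For the ``only if'' direction, let $\fka$ be a nilpotent two-sided ideal in $\fkt_n$. I first claim $\fka \subset \fkn_n$: if some $T \in \fka$ had $T_{ii} \neq 0$, then $e_{ii} T e_{ii} = T_{ii} e_{ii} \in \fka$ would force $e_{ii} \in \fka$, contradicting nilpotence since $e_{ii}$ is idempotent. Now put $\cP = \bigcup_{X \in \fka} \supp(X) \subset [[n]]$. The identity $e_{ii} X e_{jj} = X_{ij} e_{ij}$ combined with the ideal property of $\fka$ shows that $(i,j) \in \cP$ implies $e_{ij} \in \fka$; since $\fka$ is an $\FF_q$-subspace, it follows that $\fka$ coincides with the span of $\{e_{ij} : (i,j) \in \cP\}$, which is precisely $\fkn_\cP$.

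It remains to check that $\cP$ is a poset normal in $[[n]]$. Transitivity follows from $e_{ij} e_{jk} = e_{ik}$ together with $\fka \cdot \fka \subset \fka$. For normality, given $(j,k) \in \cP$ and $1 \leq i \leq j < k \leq l \leq n$, the elementary matrices $e_{ij}, e_{kl}$ lie in $\fkt_n$, so the two-sided ideal property yields $e_{ij} \cdot e_{jk} \cdot e_{kl} = e_{il} \in \fka$ and hence $(i,l) \in \cP$. The only mild subtlety is in the diagonal step above: one must read ``nilpotent'' to mean $\fka^N = 0$ for some $N$ rather than merely that each matrix in $\fka$ is individually nilpotent, so that the idempotent $e_{ii}$ supplies a genuine contradiction.
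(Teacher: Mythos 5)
Your proof is correct and complete. A few remarks on how it compares to the paper's route: the paper does not actually prove this corollary in the text at all — it simply notes that the corollary "follows from the proof of" Lemma~\ref{normal-in-U_n} and cites Corollary~4.1 of \cite{M09}, whose argument runs through the group-theoretic normality criterion $U_\cP \vartriangleleft U_n \Leftrightarrow \cP \vartriangleleft [[n]]$. Your argument instead works entirely at the level of the algebra $\fkt_n$, using the identity $e_{ii}Xe_{jj} = X_{ij}e_{ij}$ to (1) force $\fka \subset \fkn_n$, (2) extract the elementary matrices $e_{ij}$ for $(i,j)\in\cP := \bigcup_{X\in\fka}\supp(X)$, (3) get transitivity of $\cP$ from $e_{ij}e_{jk}=e_{ik}\in\fka$, and (4) get normality of $\cP$ from the two-sided ideal property with factors $e_{ij}, e_{kl} \in \fkt_n$. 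This is a clean, self-contained, and arguably more elementary route than the one the paper points to, since it never passes through the group $U_n$. The ``if'' direction is handled correctly by unwinding $(TX)_{ik}$ and $(XT)_{ik}$ against the normality condition~(\ref{normal-def}).

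One small point: the caveat in your final sentence is unnecessary. Even if "nilpotent" were read element-wise, a nonzero idempotent can never be a nilpotent element (if $e^2 = e$ and $e^N = 0$ then $e = e^N = 0$), so $e_{ii}\in\fka$ is a contradiction under either reading. The argument goes through without the distinction, though it costs nothing to note the standard convention $\fka^N = 0$.
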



Proposition 2 in \cite{S75} shows the number of nilpotent two-sided ideals in $\fkt_n$, and hence the number of normal pattern subgroups of $U_n$, to be the $n$th Catalan number $C_n = \frac{1}{n+1}\binom{2n}{n}$.  In fact, this result holds if $\fkt_n$ is taken to be the algebra of $n\times n$ upper triangular matrices over any field.  Intuitively, this follows by viewing the positions of a matrix as the interior squares of an $n\times n$ grid.  Then, given a normal poset $\cP\vartriangleleft [[n]]$, consider the border separating the positions in $\cP$ from all other positions on and above the diagonal.  Condition (\ref{normal-def}) ensures that this border is a monotonic path in the grid starting at the upper left hand corner, ending at the lower right corner, and never passing below the diagonal.  For example,
\[ U_\cP = \left\{ \(\barr{cccc} 1 & * & * & *  \\ & 1 & 0 & *   \\ & & 1 & 0   \\ & & & 1  \earr\) \right\} \vartriangleleft U_4 \qquad\text{corresponds to}\qquad 
\xy<0.45cm,1.35cm> \xymatrix@R=.3cm@C=.3cm{
\bullet \ar @{-} [r] \ar @{.} [rrrr]& \bullet   & \cdot  &\cdot  &\cdot  \\
\cdot   \ar @{.} [rrrr]& \bullet \ar @{-} [u] \ar @{-} [r]  & \bullet \ar @{-} [r]  & \bullet  &\cdot \\
\cdot \ar @{.} [rrrr]  & \cdot & \cdot & \bullet \ar @{-} [u] \ar @{-} [r] & \bullet   \\
\cdot \ar @{.} [rrrr]  & \cdot & \cdot & \cdot &    \bullet \ar @{-} [u] \\
\cdot \ar @{.} [rrrr] \ar @{.} [uuuu]  & \cdot \ar @{.} [uuuu]& \cdot \ar @{.} [uuuu]& \cdot \ar @{.} [uuuu]& \bullet\ar @{.} [uuuu] \ar @{-} [u]  
}\endxy\]
This gives a bijective correspondence between the set nilpotent ideals in $\fkt_n$ and the set of Dyck paths of order $n$, which has order $C_n$.  This number is independent of  $\FF_q$.  By contrast, the number of nilpotent two-sided ideals in $\fkn_n$ depends significantly on the field $\FF_q$, and is infinite if $\FF_q$ is replaced by a field of characteristic zero.  See Section 4 in \cite{M09} for a more detailed discussion.

Now that we have in  classified the normal pattern subgroups of $U_n$, we can set about describing the family's superclasses and supercharacters.  Fix a poset $\cP\vartriangleleft [[n]]$.  Corollary \ref{supernormal} then shows that $U_n$ acts on $\fkn_\cP$ on the left and right by multiplication.  These actions in turn give rise to compatible left and right actions of $U_n$ on $\fkn_\cP^*$, defined in the usual way by $g\lambda(X) = \lambda(g^{-1}X)$ and $\lambda g(X) = \lambda(Xg^{-1})$ for $g \in U_n$, $\lambda \in \fkn_\cP^*$, and $X \in \fkn_\cP$.  Since $gU_\cP = gU_\cP g^{-1} g = U_\cP g$ for all $g \in U_n$, we can view $U_n$ as acting (on the left and right) on the left, right, and two-sided $U_\cP$ orbits of $\fkn_\cP$ and $\fkn_\cP^*$.  For example we have 
\[ gU_\cP X U_\cP h  = U_\cP(gXh)U_\cP\qquad\text{and}\qquad g U_\cP \lambda U_\cP h = U_\cP (g\lambda h) U_\cP\] for $g,h \in U_n$, $X \in \fkn_\cP$, and $\lambda \in \fkn_\cP^*$.  These actions evidently preserve all orbit sizes, so it follows that each left/right/two-sided $U_n$-orbit in $\fkn_\cP$ or $\fkn_\cP^*$ decomposes as a disjoint union of left/right/two-sided $U_\cP$-orbits, all of which have the same cardinality.

This last statement suggests a strategy for identifying the superclasses and supercharacters of the normal pattern subgroup $U_\cP$.  This classification amounts to describing the two-sided $U_\cP$-orbits in $\fkn_\cP$ and $\fkn_\cP^*$, and we can do this in two steps: by first finding the $U_n$-orbits in $\fkn_\cP$ and $\fkn_\cP^*$, and then decomposing each $U_n$-orbit into $U_\cP$-orbits.  The first step in this process is in some sense trivial, since by (\ref{U_n-classification}) we can index the orbits of the action of $U_n$ with $\FF_q$-labeled set partitions.  We accomplish the second step by introducing some additional constructions defined in terms of the poset $\cP$ to identify the distinct $U_\cP$-orbits in each $U_n$-orbit.

Of course, these ideas apply equally well to the problem of describing the superclasses 
 and supercharacters of the normal pattern subgroups of an arbitrary pattern group $U_\cQ$ in place of  $U_n$.  However, one  needs to thoroughly understand the superclasses and supercharacters of $U_\cQ$ to derive anything very explicit about the analogous structures for the group's normal  subgroups, which is why we restrict our attention to the case $U_\cQ = U_n$.
 
We carry out the strategy described above in the next two sections, then derive an additional correspondence between the supercharacters of $U_\cP\vartriangleleft U_n$ and certain subposets of $\cP$ whose covers are labeled by elements of $\FF_q^\times$.

\section{Superclass Constructions}\label{superclass-section}

Fix a poset $\cP\vartriangleleft [[n]]$.  We know that the two-sided $U_n$-orbits in $\fkn_\cP$ are indexed by the set partitions $\sP_\cP$, and we need a way of identifying the distinct $U_\cP$ orbits in each.  We accomplish this with the following constructions.  Given a set partition $\lambda \in \sP_{\cP}$ and a poset $\cQ \in \{ \cP, [[n]]\}$,  
define 
\be \label{adj} \ba \adj_\cQ^{\mathrm{L}}(\lambda) &= \{ (i,k) : \exists\ (j,k) \in \supp(\lambda)\text{ with }(i,j) \in \cQ \},\\
\adj_\cQ^{\mathrm{R}}(\lambda) &= \{ (i,k) : \exists\ (i,j) \in \supp(\lambda)\text{ with }(j,k) \in \cQ \}, \\
\adj_\cQ(\lambda) &= \adj_\cQ^{\mathrm{L}}(\lambda) \cup \adj_\cQ^{\mathrm{R}}(\lambda).
\ea \ee
The notation $\adj$ comes from thinking of these positions as being \emph{adjacent} to the support of $\lambda$ with respect to $\cQ$.  In particular, these positions are the only ones which we can ``reach'' by acting on $\lambda$ with $U_\cQ$, in the sense that elements in the one-sided orbits $U_\cQ \lambda$ and $\lambda U_\cQ$ have nonzero entries only in $\supp(\lambda)$ and in $\adj_\cQ^{\mathrm{L}}(\lambda)$ and $\adj_\cQ^{\mathrm{R}}(\lambda)$, respectively.

It is clear from our definition of a poset and Lemma \ref{normal-in-U_n} that the positions defined by $\adj_\cQ^{\mathrm L}$ and $\adj_\cQ^{\mathrm R}$ are contained in $\cP$ regardless of whether $\cQ = \cP\text{ or }\cQ=[[n]]$.  
Next define the set differences
\be\label{aux} 
 \ba \aux^{\mathrm{L}}_\cQ(\lambda) &= \adj_{[[n]]}^{\mathrm L} (\lambda) -  \adj_{\cQ}^{\mathrm{L}}(\lambda)
 ,\\
\aux^{\mathrm{R}}_\cQ(\lambda) &= \adj_{[[n]]}^{\mathrm R} (\lambda) -  \adj_{\cQ}^{\mathrm{R}}(\lambda)
,\\
\aux_\cQ(\lambda) &= \adj_{[[n]]}^{} (\lambda) -  \adj_{\cQ}^{}(\lambda).
\ea
\ee 
The notation $\aux$ comes from viewing these positions as \emph{auxiliary} to the support of $\lambda$ with respect to $\cQ$, in the sense that the entries in these positions comprise the minimum amount of information necessary to specify the distinct $U_\cQ$-orbits in $U_n \lambda U_n$.  These sets of positions are likewise always subsets of $\cP$; in particular, note that they are empty when $\cQ = [[n]]$. 

The sets in both (\ref{adj}) and (\ref{aux}) are all disjoint from $\supp(\lambda)$.  This follows since $\supp(\lambda)$ contains at most one position in each row and column as  $\lambda$ is a set partition, and the positions in (\ref{adj}) and (\ref{aux}) each lie either in the same row and strictly to the right of a position in $\supp(\lambda)$, or in the same column and strictly above a position in $\supp(\lambda)$.

\begin{example}\label{superclass-example}
 Suppose $\cP\vartriangleleft[[7]]$ is the poset given by
\[\cP\ =\ \ \xy<0.45cm,1.15cm> \xymatrix@R=.3cm@C=.3cm{
7    &    \\
5 \ar @{-} [u]     & 6  \\
3\ar @{-} [u] \ar @{-} [ur]    & 4\ar @{-} [u]\ar @{-} [uul]  \\
1 \ar @{-} [u] \ar @{-} [ur]    & 2\ar @{-} [u]\ar @{-} [uul] \\
}\endxy \qquad\text{and}\qquad \lambda =1\larc{a}4\larc{b}6|2\larc{c}5|3\larc{d}7 
= \(\begin{array}{ccccccc} 
0 & 0 & 0 & a & 0 & 0 & 0 \\
 & 0 & 0 & 0 & c & 0 & 0 \\
 &  & 0 & 0 & 0& 0 & d \\
 &  &  & 0 & 0 & b & 0 \\
 &  &  &  & 0 & 0 & 0 \\
 &  &  &  &  & 0 & 0 \\
 &  &  &  &  &  & 0 \end{array}\) \in \sP_\cP.
\]
Then $U_\cP$ is the commutator subgroup of $U_{7}$, which is explicitly given by the set of matrices $x \in U_{7}$ with $x_{i,i+1} = 0$ for all $i$.  We compute \[ \ba 
&\adj_{[[7]]}^{\mathrm{L}}(\lambda) = \left\{ \barr{l} (1,5),(1,6),(1,7), \\ (2,6),(2,7), (3,6)\earr\right\}, \\ 
&\adj_\cP^{\mathrm{L}}(\lambda) = \{ (1,6),(1,7),(2,6) \},\\ 
&\aux_\cP^{\mathrm{L}}(\lambda) = \{ (1,5),(2,7), (3,6)\},
\ea
\qquad\text{and}\qquad 
\ba
&\adj_{[[7]]}^{\mathrm{R}}(\lambda) = \left\{\barr{l} (1,5),(1,6),(1,7), \\ (2,6),(2,7), (4,7)\earr\right\}, \\ 
&\adj_\cP^{\mathrm{R}}(\lambda) = \{ (1,6),(1,7),(2,7) \},\\ 
&\aux_\cP^{\mathrm{R}}(\lambda) = \{ (1,5),(2,6),(4,7)\} .
\ea\]  
Thus,  $\adj_{[[7]]}(\lambda)$, $\adj_{\cP}(\lambda)$, and $\aux_\cP(\lambda)$ are the following sets of positions in a $7\times 7$ matrix:
\[ \barr{c} \(\begin{array}{ccccccc} 
0 & 0 & 0 & {\color{Gray}a} & {\color{Green}\blacksquare} & {\color{Green}\blacksquare}  & {\color{Green}\blacksquare}  \\
 & 0 & 0 & 0 & {\color{Gray}c} & {\color{Green}\blacksquare}  & {\color{Green}\blacksquare}  \\
 &  & 0 & 0 & 0& {\color{Green}\blacksquare}  & {\color{Gray}d}  \\
 &  &  & 0 & 0 & {\color{Gray}b} & {\color{Green}\blacksquare}  \\
 &  &  &  & 0 & 0 & 0 \\
 &  &  &  &  & 0 & 0 \\
 &  &  &  &  &  & 0 \end{array}\), \\
 {\color{Green}\blacksquare} \in \adj_{[[7]]}(\lambda) \earr 
 \quad  \barr{c} \(\begin{array}{ccccccc} 
0 & 0 & 0 & {\color{Gray}a} & 0 & {\color{Blue}\blacksquare}  & {\color{Blue}\blacksquare}  \\
 & 0 & 0 & 0 & {\color{Gray}c} & {\color{Blue}\blacksquare}  & {\color{Blue}\blacksquare}  \\
 &  & 0 & 0 & 0& 0  & {\color{Gray}d}  \\
 &  &  & 0 & 0 & {\color{Gray}b} & 0  \\
 &  &  &  & 0 & 0 & {0} \\
 &  &  &  &  & 0 & 0 \\
 &  &  &  &  &  & 0 \end{array}\), \\
 {\color{Blue}\blacksquare} \in \adj_\cP(\lambda) \earr 
 \quad
\barr{c}  \(\begin{array}{ccccccc} 
0 & 0 & 0 & {\color{Gray}a} & {\color{Red}\blacksquare} & 0  & 0  \\
 & 0 & 0 & 0 & {\color{Gray}c} & 0 & 0  \\
 &  & 0 & 0 & 0& {\color{Red}\blacksquare}  & {\color{Gray}d}  \\
 &  &  & 0 & 0 & {\color{Gray}b} & {\color{Red}\blacksquare}  \\
 &  &  &  & 0 & 0 & 0  \\
 &  &  &  &  & 0 & 0 \\
 &  &  &  &  &  & 0 \end{array}\). \\ 
 {\color{Red}\blacksquare} \in \aux_\cP(\lambda) \earr  \]
 \end{example}
 
The main result of this section, and the singular motivation for these definitions, is the construction of a bijection
\[ \barr{ccc}  
\Bigl\{
\text{Superclasses of $U_\cP$}
\Bigr\}
 & \leftrightarrow & 
 \Bigl\{
(\lambda, X) :  \text{$\lambda \in \sP_\cP$ and $X \in \fkn_\cP$, $\supp(X) \subset \aux_\cP(\lambda)$} 
\Bigr\}.
\earr\]
In this direction, we first have the following lemma, which employs (\ref{adj}) and (\ref{aux}) to classify the one-sided $U_\cP$-orbits in $\fkn_\cP$.

\begin{lemma} \label{one-sided-orbits} Fix a poset $\cP\vartriangleleft [[n]]$ and a set partition $\lambda \in \sP_\cP$, and let $\cQ \in \{\cP, [[n]]\}$.  
\begin{enumerate}
\item[(a)] Each left $U_\cQ$-orbit in $U_n\lambda$ has the form
 $
 \left\{ \lambda +X + Z \in \fkn_\cP: \supp(Z) \subset \adj_\cQ^{\mathrm L}(\lambda)\right\}
 $ for a unique matrix $X \in \fkn_\cP$ with $\supp(X)\subset \aux_\cQ^{\mathrm L}(\lambda)$.
 
 \item[(b)]  Each right $U_\cQ$-orbit in $\lambda U_n$ has the form
$ \left\{ \lambda + Y + Z  \in \fkn_\cP : \supp(Z) \subset \adj_\cQ^{\mathrm R}(\lambda)\right\}
$ for a unique matrix $Y \in \fkn_\cP$ with $\supp(Y) \subset \aux_\cQ^{\mathrm R}(\lambda)$.  

\item[(c)] Consequently, $|U_\cQ \lambda U_\cQ| = q^{|\adj_\cQ(\lambda)|}$.
\end{enumerate}
\end{lemma}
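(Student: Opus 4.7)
The plan is to establish (a) directly, then (b) by symmetry, and finally to derive (c) by a counting argument combining the two. For (a), I will exploit the crucial fact that $\lambda \in \sP_\cP$ is a labeled set partition, so each column of $\lambda$ has at most one nonzero entry. Matrix multiplication gives $g\lambda - \lambda = (g-1)\lambda$, whose $(i,k)$-entry equals $(g-1)_{i,j_k}\lambda_{j_k,k}$, where $j_k$ is the unique index (if any) with $(j_k,k) \in \supp(\lambda)$. This entry is nonzero only when $(i,j_k) \in \cQ$ and $(j_k,k) \in \supp(\lambda)$, i.e., exactly when $(i,k) \in \adj_\cQ^{\mathrm{L}}(\lambda)$. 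Because distinct columns $k$ determine distinct $j_k$ (by the set partition property), the entries $(g-1)_{i,j_k}$ are independent parameters, so as $g$ ranges over $U_\cQ$ the matrix $(g-1)\lambda$ sweeps out precisely $\{Z \in \fkn_\cP : \supp(Z) \subset \adj_\cQ^{\mathrm{L}}(\lambda)\}$.

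To handle a general element $\lambda + X$ of $U_n\lambda$ with $\supp(X) \subset \aux_\cQ^{\mathrm{L}}(\lambda)$, I will expand
\[g(\lambda+X) - (\lambda+X) = (g-1)\lambda + (g-1)X\]
and verify that $(g-1)X$ also has support inside $\adj_\cQ^{\mathrm{L}}(\lambda)$. The case $\cQ=[[n]]$ is vacuous since $\aux_{[[n]]}^{\mathrm{L}}(\lambda)=\emptyset$. For $\cQ=\cP$, suppose $((g-1)X)_{i,k} \neq 0$: then there exists $j$ with $(i,j) \in \cP$ and $(j,k) \in \supp(X) \subset \aux_\cP^{\mathrm{L}}(\lambda)$, and by definition of $\aux^{\mathrm{L}}$ some $j_0$ satisfies $j < j_0$ and $(j_0,k) \in \supp(\lambda)$. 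Applying normality of $\cP$ to the cover pair $(i,j)$ with $i \le i < j \le j_0$ forces $(i,j_0) \in \cP$, hence $(i,k) \in \adj_\cP^{\mathrm{L}}(\lambda)$. Therefore $U_\cQ(\lambda + X) \subset \{\lambda + X + Z : \supp(Z) \subset \adj_\cQ^{\mathrm{L}}(\lambda)\}$, and the earlier surjectivity of $g \mapsto (g-1)\lambda$ forces equality. Since $\aux_\cQ^{\mathrm{L}}(\lambda)$ and $\adj_\cQ^{\mathrm{L}}(\lambda)$ partition $\adj_{[[n]]}^{\mathrm{L}}(\lambda)$, the matrix $X$ is uniquely recovered from the orbit, completing (a). Part (b) follows by an entirely symmetric argument applied to the right action $\lambda \mapsto \lambda h$.

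For (c), specializing (a) and (b) to $X = Y = 0$ yields $|U_\cQ\lambda| = q^{|\adj_\cQ^{\mathrm{L}}(\lambda)|}$ and $|\lambda U_\cQ| = q^{|\adj_\cQ^{\mathrm{R}}(\lambda)|}$, and direct inspection of supports gives
\[|U_\cQ\lambda \cap \lambda U_\cQ| = q^{|\adj_\cQ^{\mathrm{L}}(\lambda) \cap \adj_\cQ^{\mathrm{R}}(\lambda)|}.\]
The two-sided orbit $U_\cQ\lambda U_\cQ$ partitions into left $U_\cQ$-orbits; each is a right-translate of $U_\cQ\lambda$, so each has size $q^{|\adj_\cQ^{\mathrm{L}}(\lambda)|}$. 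These orbits are in bijection with cosets of $U_\cQ\lambda \cap \lambda U_\cQ$ inside $\lambda U_\cQ$, of which there are $q^{|\adj_\cQ^{\mathrm{R}}(\lambda)| - |\adj_\cQ^{\mathrm{L}}(\lambda) \cap \adj_\cQ^{\mathrm{R}}(\lambda)|}$. Multiplying yields
\[|U_\cQ\lambda U_\cQ| = q^{|\adj_\cQ^{\mathrm{L}}(\lambda)| + |\adj_\cQ^{\mathrm{R}}(\lambda)| - |\adj_\cQ^{\mathrm{L}}(\lambda) \cap \adj_\cQ^{\mathrm{R}}(\lambda)|} = q^{|\adj_\cQ^{\mathrm{L}}(\lambda) \cup \adj_\cQ^{\mathrm{R}}(\lambda)|} = q^{|\adj_\cQ(\lambda)|}.\]

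I expect the crux of the argument to be the normality step in part (a): the claim that $(g-1)X$ has support in $\adj_\cP^{\mathrm{L}}(\lambda)$ is the unique place the hypothesis $\cP \vartriangleleft [[n]]$ is used, and without it different $U_\cP$-orbits indexed by $X$ could collide, breaking the bijection with auxiliary data in $\aux_\cP^{\mathrm{L}}(\lambda)$.
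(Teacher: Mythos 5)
Your argument is correct and follows essentially the same path as the paper: establish $U_\cQ\lambda = \{\lambda + Z : \supp(Z)\subset\adj_\cQ^{\mathrm L}(\lambda)\}$ first, then use normality of $\cP$ to show that $\supp((g-1)X)\subset\adj_\cP^{\mathrm L}(\lambda)$ whenever $\supp(X)\subset\aux_\cP^{\mathrm L}(\lambda)$, then derive (c) via $|U_\cQ\lambda U_\cQ| = |U_\cQ\lambda|\,|\lambda U_\cQ|/|U_\cQ\lambda\cap\lambda U_\cQ|$. Your normality step is phrased in the contrapositive of the paper's, and is if anything a bit cleaner; you use $(i,j)\in\cP$ and the existence of $j_0$ to force $(i,j_0)\in\cP$, which is fine.

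The one place where your write-up is too quick is the clause ``the earlier surjectivity of $g\mapsto(g-1)\lambda$ forces equality.'' Showing $U_\cQ(\lambda+X) \subset \{\lambda+X+Z : \supp(Z)\subset\adj_\cQ^{\mathrm L}(\lambda)\}$ and knowing $g\mapsto(g-1)\lambda$ surjects onto the right-hand support-set does not by itself give equality, because $g\mapsto(g-1)(\lambda+X) = (g-1)\lambda + (g-1)X$ is not the map you proved surjective. You need a size argument to close the inclusion. The paper does this by observing that $\lambda$ and $\lambda+X$ lie in the same left $U_n$-orbit, and since $U_\cP\vartriangleleft U_n$ every left $U_\cP$-orbit inside a fixed $U_n$-orbit has the same cardinality (equivalently, $\lambda+X = h\lambda$ for some $h\in U_n$, so $U_\cP(\lambda+X)=hU_\cP\lambda$ by normality). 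This gives $|U_\cP(\lambda+X)| = |U_\cP\lambda| = q^{|\adj_\cP^{\mathrm L}(\lambda)|}$, forcing equality. This is a short step, but it is not automatic, and it is the second place (besides the $\supp((g-1)X)$ computation) where normality is genuinely used, so it is worth making explicit.
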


\begin{proof}
We only prove (a) as (b) follows by similar arguments and (c) is immediate from the first two parts since $|U_\cQ\lambda U_\cQ| = \frac{|U_\cQ\lambda| |\lambda U_\cQ|}{|U_\cQ\lambda \cap \lambda U_\cQ|}$ \cite[Lemma 3.1]{DI06}.
  In this direction, we first show that 
\[U_\cQ \lambda = \left\{\lambda+ Z \in \fkn_\cP: \supp(Z) \subset \adj_\cQ^{\mathrm L}(\lambda)\right\}.\]   
This holds since $(g\lambda-\lambda)_{ik} =  \sum_{i<j<k} g_{ij} \lambda_{jk}$ for $g\in U_\cQ$.  If $(i,k)\notin \adj_\cQ^{\mathrm{L}}(\lambda)$ then either $(j,k)\notin \supp(\lambda)$ for all $j>i$, or  $(i,j)\notin \cQ$ whenever $(j,k) \in\supp(\lambda)$.  In both cases $(g\lambda-\lambda)_{ik} = 0$, so  $\supp(g\lambda-\lambda) \subset \adj_\cQ^{\mathrm L}(\lambda)$ and $U_\cQ\lambda$ is contained in the given right hand set.  To show the reverse containment, we observe that $U_\cQ\lambda -\lambda$ is a vector space, and that if $(i,k) \in \adj_\cQ^{\mathrm L}(\lambda)$ then we have $(i,j) \in \cQ$ and $(j,k) \in \supp(\lambda)$, so $e_{ik} = (1+\lambda_{jk}^{-1}e_{ij}) \lambda - \lambda \in U_\cQ \lambda - \lambda$.  Thus the right hand set is itself contained in $U_\cQ\lambda$.  

Since $\aux_{[[n]]}^{\mathrm L}(\lambda) = \varnothing$, this proves the lemma when $\cQ = [[n]]$.  To treat the case $\cQ = \cP$, observe that 
\[\frac{| U_n\lambda|}{|U_\cP\lambda|} = q^{|\adj_{[[n]]}^{\mathrm L}(\lambda)| - |\adj_\cP^{\mathrm L}(\lambda)|} = q^{|\aux_\cP^{\mathrm L}(\lambda)|},\]
 and so the number of left $U_\cP$-orbits in $ U_n\lambda$ is the same as the number of elements $X \in \fkn_\cP$ with $\supp(X)\subset \aux_\cP^{\mathrm L}(\lambda)$.  It therefore suffices to demonstrate that $U_\cP(\lambda + X) = U_\cP\lambda +X$ for $X \in \fkn_\cP$ with $\supp(X) \subset \aux_\cP^{\mathrm L}(\lambda)$, as this shows that distinct elements of the form $\lambda+X$ belong to distinct left $U_\cP$-orbits, and that these orbits are of the desired form.  To this end, fix $X\in \fkn_\cP$ with $\supp(X) \subset \aux_\cP^{\mathrm L}(\lambda)$.   Let $g \in U_\cP$, and note that $(gX-X)_{ik} = \sum_{i<j<k} g_{ij}X_{jk}$.  If $(i,k) \in \aux_\cP^{\mathrm L}(\lambda)$, then by definition there is some $i<j'<k$ with $(j',k) \in \supp(\lambda)$ and $(i,j')\notin \cP$.  In this case, for each $i<j<k$ either $X_{jk} = 0$ or 
\[(j,k) \in \aux_\cP^{\mathrm L}(\lambda) \Rightarrow (j,j')\in [[n]]- \cP \Rightarrow (i,j) \notin \cP \Rightarrow g_{ij} = 0\] by Lemma \ref{normal-in-U_n}.  Hence $(gX-X)_{ik} = 0$ if $(i,k) \in \aux_\cP^{\mathrm L}(\lambda)$.  Clearly $(gX-X)_{ik} = 0$ if $(i,k) \notin \adj_{[[n]]}^{\mathrm L}(\lambda)$ since in this case we have $X_{jk} = 0$ for all $j>i$.  Thus, $\supp(gX-X)\subset \adj_\cP^{\mathrm L}(\lambda)$, so it follows that $U_\cP(\lambda + X) \subset U_\cP \lambda + X.$  Since $|U_\cP(\lambda +X)| = |U_\cP\lambda| = |U_\cP\lambda + X|$ as $\lambda$ and $\lambda+X$ belong to the same left $U_n$-orbit, we must have $U_\cP(\lambda + X) = U_\cP\lambda + X$, as desired.   
\end{proof}

Classifying these one-sided orbits in some sense solves the analogous two-sided problem, since for any $\lambda \in \fkn_\cP$, we have a natural surjection 
\be\label{natural-surj} \barr {cccr} U_\cP \lambda \times \lambda U_\cP  & \rightarrow & U_\cP \lambda U_\cP \\ 
 (g\lambda, \lambda h )& \mapsto &g\lambda h& \qquad\text{for $g,h \in U_\cP$.}\earr\ee  Thus, knowing the one-sided orbits gives a way of constructing all the two-sided orbits, although not uniquely.
 
   In the special case that $\lambda \in \sP_\cP$ and $\cP\vartriangleleft [[n]]$, we can explicitly describe the map (\ref{natural-surj}) without reference to a choice of elements $g,h$ in the following way.  Given $\lambda \in \sP_\cP$, define a $\FF_q$-bilinear product $*_\lambda : \fkn_\cP \times \fkn_\cP \rightarrow \fkn_\cP$ by $ (X,Y) \mapsto X *_\lambda Y$, where 
\[ \label{*-lambda} (X*_\lambda Y)_{il} = X_{il} + Y_{il} + \sum_{\substack{i < j <k < l \\ (j,k) \in \supp(\lambda)}} X_{ik} Y_{jl} (\lambda_{jk})^{-1},\qquad\text{for }X,Y\in \fkn_\cP.\] This product characterizes the map (\ref{natural-surj}) by the following lemma:

\begin{lemma}  Assume $\cP\vartriangleleft [[n]]$ and $\lambda \in \sP_\cP$.  If $X = g\lambda -\lambda$ and $Y = \lambda h - \lambda$ for $g,h \in  U_n$, then $X*_\lambda Y = g\lambda h - \lambda$.  
\end{lemma}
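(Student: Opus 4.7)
The plan is to transport the statement to matrices and recognise that $g\lambda h$ is simply the ordinary triple matrix product $g \cdot \lambda \cdot h$, then to expand this product entry by entry and exploit the fact that $\lambda \in \sP_\cP$ has at most one nonzero entry in each row and in each column. The identity $(g\lambda-\lambda)_{ik} = \sum_{i<j<k} g_{ij}\lambda_{jk}$ used in the proof of Lemma~\ref{one-sided-orbits} is precisely the statement that $g\lambda$, viewed as a matrix in $\fkn_\cP$, equals the matrix product $g\cdot \lambda$; the analogous computation for the right action gives $\lambda h = \lambda \cdot h$. The compatibility $(g\lambda)h = g(\lambda h)$ then says that $g\lambda h$ can be computed as the triple product $g\cdot \lambda \cdot h$, and this matrix lies in $\fkn_\cP$ by Corollary~\ref{supernormal}.

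Expanding the $(i,l)$-entry of this triple product produces
\[
(g\lambda h)_{il} \;=\; \sum_{i \leq j \leq k \leq l} g_{ij}\,\lambda_{jk}\,h_{kl},
\]
with the displayed index restrictions forced because $g$ and $h$ are unipotent upper triangular while $\lambda$ is strictly upper triangular. Splitting the sum according to whether $j = i$ or $j > i$, and whether $k = l$ or $k < l$, and using $g_{ii} = h_{ll} = 1$, identifies the four contributions as $\lambda_{il}$, $(\lambda h - \lambda)_{il} = Y_{il}$, $(g\lambda - \lambda)_{il} = X_{il}$, and a residual cross term $\sum_{i<j<k<l} g_{ij}\lambda_{jk} h_{kl}$. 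Subtracting $\lambda_{il}$ shows that $(g\lambda h - \lambda)_{il}$ equals $X_{il} + Y_{il}$ plus this cross term.

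The remaining task, and the step that forces the hypothesis $\lambda \in \sP_\cP$, is to verify that the cross term agrees with the final summand in the definition of $(X *_\lambda Y)_{il}$. Since $\lambda_{jk} = 0$ outside $\supp(\lambda)$, the cross term is supported on quadruples $i<j<k<l$ with $(j,k) \in \supp(\lambda)$. For any such $(j,k)$, the ``at most one entry per column'' property of a labeled set partition collapses $X_{ik} = \sum_{i<j'<k} g_{ij'}\lambda_{j'k}$ to the single nonzero term $g_{ij}\lambda_{jk}$, and symmetrically the ``one entry per row'' property collapses $Y_{jl}$ to $\lambda_{jk} h_{kl}$. Multiplying these identities yields $g_{ij}\lambda_{jk} h_{kl} = X_{ik} Y_{jl} (\lambda_{jk})^{-1}$, which is exactly the summand appearing in the definition of $X*_\lambda Y$, so $(g\lambda h - \lambda)_{il} = (X*_\lambda Y)_{il}$ as required. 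The only non-routine step is this final collapse of the one-sided orbit entries to single products; everything else is matrix-multiplication bookkeeping.
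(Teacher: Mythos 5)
Your proof is correct and follows essentially the same route as the paper: the paper obtains the four-term expansion via the algebraic identity $g\lambda h - \lambda = X + Y + (g-1)\lambda(h-1)$ while you obtain it by splitting the index range of the entrywise triple product, but these are the same decomposition, and the key step — collapsing $X_{ik}$ and $Y_{jl}$ to single products via the one-nonzero-entry-per-row-and-column property of $\lambda\in\sP_\cP$ — is identical.
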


\begin{proof}
Since $g\lambda h - \lambda = X + Y +(g-1)\lambda(h-1)$ we have
\be \label{glh} (g\lambda h - \lambda)_{il} = X_{il} + Y_{il} + \sum_{\substack{i < j <k < l \\ (j,k) \in \supp(\lambda)}} g_{ij}\lambda_{jk}h_{kl}.\ee The set partition $\lambda$ has at most one nonzero entry in each row and column; therefore, if $(j,k)\in \supp(\lambda)$ then $\lambda_{j'k} = \lambda_{jk'} = 0$ for all $j'\neq j$ and $k'\neq k$, and so 
$ g_{ij}\lambda_{jk} = \sum_{i<j'<k} g_{ij'}\lambda_{j'k} = X_{ik}$ and 
$\lambda_{jk} h_{kl} = \sum_{j<k'<l} \lambda_{jk'} h_{k'l} = Y_{jl}.$
Hence if $(j,k)\in \supp(\lambda)$ then $g_{ij}\lambda_{jk}h_{kl} = (g_{ij}\lambda_{jk})(\lambda_{jk}h_{kl})(\lambda_{jk})^{-1} = X_{ik}Y_{jl}(\lambda_{jk})^{-1}$, and after  substituting this into (\ref{glh}), we obtain $g\lambda h -\lambda = X*_\lambda Y$.
\end{proof}

\begin{example}  Suppose $\cP\vartriangleleft[[7]]$ and $\lambda \in \sP_\cP$ are as in Example \ref{superclass-example}.  By Lemma \ref{one-sided-orbits}, the sets \[\{ \lambda +  r e_{15} + s e_{27} + t e_{36} : r,s,t \in \FF_q\}\quad\text{and}\quad \{ \lambda +  u e_{15} + v e_{26} + w e_{47} : u,v,w \in \FF_q\}\] give representatives of the distinct left and right $U_\cP$-orbits in $U_n\lambda$ and  $\lambda U_n$, respectively.  If $X =r e_{15} + s e_{27} + t e_{36}$ and $Y =u e_{15} + v e_{26} + w e_{47}$ for some $r,s,t,u,v,w\in \FF_q$, then 
\[ X*_\lambda Y = (r+u)e_{15} + s e_{27} + te_{36} + ve_{26} + we_{47} + \tfrac{rv}{c} e_{16} + \tfrac{tw}{b}e_{37}.\]
 \end{example}

Fix $\lambda \in \sP_\cP$.  By Lemma \ref{one-sided-orbits}, there are $\frac{|U_n \lambda U_n|}{|U_\cP \lambda U_\cP|} = q^{|\adj_{[[n]]}(\lambda)| - |\adj_\cP(\lambda)|} = q^{|\aux_\cP(\lambda)|}$  two-sided $U_\cP$-orbits contained in $ U_n\lambda  U_n$.  Thus, simply by order considerations we know that these $U_\cP$-orbits are in bijection with the set of $X \in \fkn_\cP$ with $\supp(X)\subset \aux_\cP(\lambda)$.  Our problem is to assign each such $X$ to a representative of a distinct two-sided $U_\cP$-orbit.  There is not really a canonical way of doing this, but using the product $*_\lambda$ defined above, we can describe one relatively natural method as follows.

\newcommand{\fkr}{\mathfrak{R}}

Given $X \in \fkn_\cP$ with $\supp(X) \subset \aux_\cP(\lambda)$, let $X_{\mathrm L}$ and $X_{\rml}$ be the unique elements of $\fkn_\cP$ such that 
\[ X = X_{\mathrm L} + X_{\rml},\qquad \supp(X_{\mathrm L}) \subset \aux_\cP^{\mathrm L}(\lambda),\qquad 
 \supp(X_{ \rml}) \subset \aux_\cP^{\mathrm R}(\lambda) - \aux_\cP^{\mathrm L}(\lambda).\]  Now define a map $\fkr_\lambda : \{ X \in \fkn_\cP : \supp(X) \subset \aux_\cP(\lambda) \} \rightarrow  U_n\lambda  U_n - \lambda$ by
 \[ \label{fkr} \fkr_\lambda (X) = X_\mathrm{L} *_\lambda X_{\rml},\qquad\text{for }X\in \fkn_\cP\text{ with }\supp(X) \subset \aux_\cP(\lambda).\]  By Lemma \ref{one-sided-orbits}, $X_{\mathrm L} \in  U_n \lambda - \lambda$ and $X_{\rml} \in \lambda  U_n -\lambda$, so $\fkr_\lambda$ maps $X$ to an element of $ U_n\lambda  U_n -\lambda$.  This definition gives us all we need to index the superclasses of $U_\cP$.

\begin{theorem}\label{normal-reps} Fix a poset $\cP\vartriangleleft [[n]]$.  Given $\lambda \in \sP_\cP$ and $X \in \fkn_\cP$ with $ \supp(X) \subset \aux_\cP(\lambda)$, define 
\[ \cK_\cP^{(\lambda,X)} \overset{\mathrm{def}}= \text{the superclass of $U_\cP$ containing $1+\lambda + \fkr_\lambda (X) \in U_\cP$}.\]
\begin{enumerate}
\item[(a)] The following map is  a bijection:
\[\label{superclasses-defn0} \barr{ccc}  
\Bigl\{
(\lambda, X) : \text{$\lambda \in \sP_\cP$ and $X \in \fkn_\cP$, $\supp(X) \subset \aux_\cP(\lambda)$}
\Bigr\} & \to & 
\Bigl\{ 
\text{Superclasses of $U_\cP$}
\Bigr\} \\
(\lambda,X) & \mapsto & \cK_\cP^{(\lambda,X)}.
 \earr\]  
\item[(b)]  Given $\lambda \in \sP_\cP$ and $X \in \fkn_\cP$ with $\supp(X) \subset \aux_\cP(\lambda)$, the superclass $\cK_\cP^{(\lambda, X)}$ has order $q^{|\adj_\cP(\lambda)|}$, which does not depend on $X$.
\end{enumerate}
\end{theorem}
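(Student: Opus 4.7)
The plan is to prove (b) first by a direct argument using normality, then to reduce (a) to an injectivity statement via counting. Throughout I identify superclasses of $U_\cP$ with two-sided $U_\cP$-orbits in $\fkn_\cP$ via $g \mapsto g-1$. Since $\cP \vartriangleleft [[n]]$, Corollary \ref{supernormal} makes $\fkn_\cP$ invariant under two-sided multiplication by $U_n$, so the two-sided $U_n$-orbits in $\fkn_\cP$ are indexed by $\lambda \in \sP_\cP$ via the classification (\ref{U_n-classification}), and each such orbit $U_n\lambda U_n$ decomposes into exactly $|U_n\lambda U_n|/|U_\cP\lambda U_\cP| = q^{|\aux_\cP(\lambda)|}$ two-sided $U_\cP$-orbits by Lemma \ref{one-sided-orbits}(c).

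For part (b), I would write $\lambda + \fkr_\lambda(X) = g\lambda h$ for some $g, h \in U_n$ with $g\lambda - \lambda = X_\mathrm{L}$ and $\lambda h - \lambda = X_\rml$, which exist by Lemma \ref{one-sided-orbits}(a,b). Since $U_\cP \vartriangleleft U_n$ by Lemma \ref{normal-in-U_n}, $U_\cP g = g U_\cP$ and $h U_\cP = U_\cP h$, so $U_\cP(g\lambda h) U_\cP = g(U_\cP\lambda U_\cP)h$ has cardinality $|U_\cP\lambda U_\cP| = q^{|\adj_\cP(\lambda)|}$ by Lemma \ref{one-sided-orbits}(c), independent of $X$.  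Summing over pairs then gives
\[ \sum_\lambda q^{|\aux_\cP(\lambda)|}\, q^{|\adj_\cP(\lambda)|} = \sum_\lambda q^{|\adj_{[[n]]}(\lambda)|} = \sum_\lambda |U_n\lambda U_n| = |\fkn_\cP| = |U_\cP|, \]
so if the map $(\lambda, X) \mapsto \cK_\cP^{(\lambda, X)}$ is injective then the image superclasses exhaust $U_\cP$, making it bijective.

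For injectivity, the $\lambda$-component is immediate since $\cK_\cP^{(\lambda, X)}$ lies in a unique $U_n$-orbit and $\sP_\cP$ parametrizes these uniquely.  For fixed $\lambda$, suppose $u(\lambda + \fkr_\lambda(X))v = \lambda + \fkr_\lambda(X')$ with $u, v \in U_\cP$.  Applying Lemma \ref{one-sided-orbits}(a,b) to $ug\lambda$ and $\lambda hv$ yields $ug\lambda - \lambda = X_\mathrm{L} + Z$ and $\lambda hv - \lambda = X_\rml + W$ with $\supp Z \subset \adj_\cP^\mathrm{L}(\lambda)$ and $\supp W \subset \adj_\cP^\mathrm{R}(\lambda)$, whence
\[ (X_\mathrm{L}+Z) *_\lambda (X_\rml+W) = X'_\mathrm{L} *_\lambda X'_\rml. \]

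The main obstacle is to deduce $X = X'$ from this matrix equation.  I would compare the $(i, l)$-entries of both sides for $(i, l) \in \aux_\cP(\lambda)$ and induct on $l - i$.  The defining relation $\aux_\cP = \adj_{[[n]]} \setminus \adj_\cP$ ensures $\aux_\cP \cap \adj_\cP = \emptyset$, so $Z_{il} = W_{il} = 0$ at each such $(i, l)$; the equation reduces to $X_{il} + \Sigma_{il} = X'_{il} + \Sigma'_{il}$, where $\Sigma_{il}, \Sigma'_{il}$ are the $*_\lambda$ sums over products at positions $(i, k), (j, l)$ with $i < j < k < l$ and $(j, k) \in \supp(\lambda)$.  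The base case $l - i \leq 2$ is immediate since the sums are empty.  For the inductive step, I would classify each summand according to whether $(i, j) \in \cP$ (so $(i, k) \in \adj_\cP^\mathrm{L}$) or not (so $(i, k) \in \aux_\cP^\mathrm{L}$), and similarly for $(k, l)$; when both $(i, k)$ and $(j, l)$ fall in $\aux_\cP$ the inductive hypothesis identifies the corresponding entries, while when one or both fall in $\adj_\cP$ the right-hand summand vanishes and the challenge becomes showing that the residual contributions from $Z, W$ on the left-hand side cancel.  I expect this final cancellation to be the hardest step and to require tracking $Z, W$ back to their origin in $u, v$ rather than treating them as independent parameters.
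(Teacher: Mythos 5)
Your arguments for part (b) and the counting reduction of part (a) to injectivity are both correct (the paper uses essentially the same ideas), but the injectivity step has a genuine gap that you yourself flag at the end.

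The trouble is structural. After writing the hypothesis as
\[
(X_{\mathrm L}+Z) *_\lambda (X_{\rml}+W) = X'_{\mathrm L} *_\lambda X'_{\rml},
\]
you have no control over $Z$ and $W$ beyond the support conditions $\supp(Z)\subset\adj_\cP^{\mathrm L}(\lambda)$, $\supp(W)\subset\adj_\cP^{\mathrm R}(\lambda)$. When you expand the $*_\lambda$-sum at a position $(i,l)\in\aux_\cP(\lambda)$, the cross-terms $Z_{ik}(X_{\rml})_{jl}$, $(X_{\mathrm L})_{ik}W_{jl}$, and $Z_{ik}W_{jl}$ appear on the left but have no counterpart on the right (since $\supp(X'_{\mathrm L})$ and $\supp(X'_{\rml})$ live in $\aux_\cP$-type sets, disjoint from $\adj_\cP$-type sets). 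There is no a priori reason these residual contributions vanish or telescope with the inductive hypothesis: the disjointness $\aux_\cP(\lambda)\cap\adj_\cP(\lambda)=\varnothing$ controls the entry at $(i,l)$ itself, but not the factors $Z_{ik}$, $W_{jl}$ at shorter positions. So the induction on $l-i$ does not close, and your final remark --- that one would need to trace $Z,W$ back to $u,v$ --- is exactly a confession that the argument as stated is incomplete.

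The paper sidesteps this entirely by a group-theoretic maneuver rather than an entry-by-entry computation. It first proves a technical lemma (Lemma \ref{technical-lemma}) that for $X_1,X_2$ with support in $\aux_\cP(\lambda)-\aux_\cP^{\mathrm L}(\lambda)$ one can choose the coset representatives $h_1,h_2\in U_n$ with $X_i=\lambda h_i-\lambda$ so carefully that $\supp(\lambda h_1 h_2^{-1})\cap\aux_\cP^{\mathrm L}(\lambda)=\varnothing$. Then from $x\,g_1\lambda h_1\,y = g_2\lambda h_2$ it rearranges using normality of $U_\cP$ to get $\lambda h_1 h_2^{-1} y' = x' g_1^{-1}g_2\lambda$ with $x',y'\in U_\cP$. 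The left side lies in a right $U_\cP$-orbit $(\lambda+Y)U_\cP$ with $\supp(Y)\subset\aux_\cP^{\mathrm R}(\lambda)-\aux_\cP^{\mathrm L}(\lambda)$ (by the technical lemma), the right side in a left $U_\cP$-orbit $U_\cP(\lambda+X)$ with $\supp(X)\subset\aux_\cP^{\mathrm L}(\lambda)$, and the disjointness of supports forces $X=Y=0$, hence $X_1=X_2$. That argument never opens up the $*_\lambda$-sum at all; the technical lemma is the key ingredient your proposal lacks.
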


\begin{remark}
A much simpler map from pairs $(\lambda,X)$ to superclasses of $U_\cP$  would  assign  $(\lambda,X)$ to the superclass containing the element $1+\lambda+X \in U_\cP$.  This map fails to be a bijection, however.  For a counterexample, take $\cP\vartriangleleft[[7]]$ to be as in Example \ref{superclass-example} and define $\mu,\nu \in \sP_\cP$ by $\mu = 1|2|3|4\larc{a}6|5|7$ and $\nu = 1\larc{b}7|2|3|4\larc{a}6|5$ for some $a,b \in \FF_q^\times$.  One checks that $\aux_\cP(\mu) = \aux_\cP(\nu)= \{ (3,6),(4,7)\}$, yet if $X = e_{36}+e_{47} \in \fkn_\cP$ then $1+\mu+X$ and $1+\nu+X$ belong to the same superclass of $U_\cP$.
 \end{remark}

Before continuing, it is helpful to consider an example illustrating our notation.

\begin{example}
Again suppose $\cP\vartriangleleft[[7]]$ and $\lambda \in \sP_\cP$ are as in Example \ref{superclass-example}.  If $X = r e_{15} + s e_{36} + t e_{47} \in \fkn_\cP$ for some $r,s,t \in \FF_q$, then 
\[\fkr_\lambda(X) = (re_{15} + se_{36})*_\lambda(te_{47}) = X + b^{-1}st e_{37}\] and $\cK_\cP^{(\lambda,X)}$ is the superclass of $U_\cP$ containing the element 
 \[ 1+\lambda+\fkr_\lambda(X) = \(\begin{array}{ccccccc} 
1 & 0 & 0 & a & r & 0 & 0 \\
 & 1 & 0 & 0 & c & 0 & 0 \\
 &  & 1 & 0 & 0& s & d + b^{-1}st \\
 &  &  & 1 & 0 & b & t \\
 &  &  &  & 1 & 0 & 0 \\
 &  &  &  &  & 1 & 0 \\
 &  &  &  &  &  & 1 \end{array}\) \in U_\cP.\]
\end{example}

The proof of Theorem \ref{normal-reps} depends on the following somewhat technical lemma.

\begin{lemma} \label{technical-lemma} Fix a poset $\cP\vartriangleleft [[n]]$ and $\lambda \in \sP_\cP$, and let $X_i \in \fkn_\cP$ have $\supp(X_i)\subset \aux_\cP(\lambda) - \aux_\cP^{\mathrm L}(\lambda)$ for $i=1,2$.  Then there are elements $h_1,h_2 \in U_n$ with $X_i = \lambda h_i - \lambda$  such that $\supp(\lambda h_1h_2^{-1}) \cap \aux_\cP^{\mathrm L}(\lambda) = \varnothing$.

\end{lemma}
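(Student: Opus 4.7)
The plan is to exhibit a canonical choice of $h_i$ and then verify the claim by a direct computation using the formula of Lemma~\ref{one-sided-orbits}. First unpack the hypothesis: a position $(j,l)$ belongs to $\aux_\cP(\lambda) - \aux_\cP^{\mathrm L}(\lambda)$ precisely when $(j,l) \in \aux_\cP^{\mathrm R}(\lambda)$ and $(j,l) \notin \adj_{[[n]]}^{\mathrm L}(\lambda)$. Thus for each $(j,l) \in \supp(X_i)$ there is a unique $k = k(j)$ with $(j,k) \in \supp(\lambda)$ (using $\lambda \in \sP_\cP$); moreover $k < l$, the position $(k,l)$ lies in $[[n]] \setminus \cP$, and no $(j',l) \in \supp(\lambda)$ has $j' > j$. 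Set
\[
y_i \ =\ \sum_{(j,l)\,\in\,\supp(X_i)} \tfrac{(X_i)_{jl}}{\lambda_{j,k(j)}}\, e_{k(j),\,l}\ \in\ \fkn_n,
\qquad h_i\ =\ 1 + y_i\ \in\ U_n.
\]
Applying the formula $(\lambda g - \lambda)_{pq} = \sum_{p<j<q} \lambda_{pj}\,g_{jq}$ from the proof of Lemma~\ref{one-sided-orbits} to $g = h_i$ shows $\lambda h_i - \lambda = X_i$ directly: in each entry the at-most-one-support-per-column property of $\lambda$ collapses the sum to a single term, and the definition of $y_i$ reproduces $(X_i)_{pq}$.

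Now write $h = h_1 h_2^{-1}$. From $h - 1 = (h_1 - h_2) h_2^{-1} = (y_1 - y_2)(1 + y_2)^{-1}$ and the nilpotence of $y_2$, one obtains the finite expansion
\[
h - 1\ =\ \sum_{m \,\geq\, 0}(-1)^m\,(y_1 - y_2)\,y_2^m.
\]
Applying the displayed formula again gives $(\lambda h - \lambda)_{pq} = \lambda_{p,j^*(p)}\,(h-1)_{j^*(p),q}$ whenever there is a unique $j^*(p) \in (p,q)$ with $(p, j^*(p)) \in \supp(\lambda)$, and $0$ otherwise. Since $\supp(\lambda) \cap \aux_\cP^{\mathrm L}(\lambda) = \varnothing$, it suffices to establish $(h-1)_{j^*(p),q} = 0$ whenever $(p,q) \in \aux_\cP^{\mathrm L}(\lambda)$ admits such a $j^*(p) \in (p,q)$.

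This is accomplished by analyzing each chain contribution $(y_1 - y_2)_{j^*(p), r_0}(y_2)_{r_0, r_1} \cdots (y_2)_{r_{m-1}, q}$ separately. Each nonvanishing factor $(y_2)_{r_{i-1}, r_i}$ forces the existence of $(p^{(i)}, r_i) \in \supp(X_2)$ with $(p^{(i)}, r_{i-1}) \in \supp(\lambda)$, and the leading factor forces $(p, r_0) \in \supp(X_1) \cup \supp(X_2)$. Because each support position of $X_1$ and $X_2$ avoids $\adj_{[[n]]}^{\mathrm L}(\lambda)$, no $\lambda$-support can appear strictly below it in its column; combined with uniqueness of $\lambda$-support per row for $\lambda \in \sP_\cP$, this yields by induction on $i$ the strict chain of inequalities $p > p^{(1)} > p^{(2)} > \cdots > p^{(m)}$. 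At the final step $(p^{(m)}, q) \in \supp(X_2)$, the same $\adj_{[[n]]}^{\mathrm L}$-avoidance means no $\lambda$-support occurs in column $q$ below row $p^{(m)}$; but $(p, q) \in \aux_\cP^{\mathrm L}(\lambda)$ supplies one at $(p', q)$ with $p' > p > p^{(m)}$. This contradiction shows every chain contribution vanishes, completing the proof. The principal obstacle is the inductive bookkeeping needed to maintain the strict decrease of the $p^{(i)}$ through chains of arbitrary length; once that is set up, the final contradiction against $\aux_\cP^{\mathrm L}$-membership follows immediately from the support hypothesis on $X_1$ and $X_2$, which encodes the normality $\cP \vartriangleleft [[n]]$.
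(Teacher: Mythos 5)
Your choice of $h_i$ coincides with the paper's: both set $h_i = 1 + y_i$ with $y_i = \sum (X_i)_{jl}\,\lambda_{j,k(j)}^{-1}\,e_{k(j),l}$, so the difference lies entirely in how you verify $\supp(\lambda h_1 h_2^{-1}) \cap \aux_\cP^{\mathrm L}(\lambda) = \varnothing$. The paper packages the key observation — that $\supp(X_i) \cap \adj_{[[n]]}^{\mathrm L}(\lambda) = \varnothing$ — into an invariant subspace $\cS = \{Y \in \fkn_\cP : Y_{ik} \neq 0 \Rightarrow \lambda_{jk} = 0 \text{ for } j > i\}$ (matrices supported on or below the nonzero positions of $\lambda$); it checks $\lambda, X_1, X_2 \in \cS$ and $\cS\, e_{j_t k_t} \subset \cS$, whence $\cS h_2^{-1} \subset \cS$ and $\lambda h_1 h_2^{-1} = \lambda h_2^{-1} + X_1 h_2^{-1} \in \cS$, which immediately gives the conclusion since every element of $\cS$ has support disjoint from $\adj_{[[n]]}^{\mathrm L}(\lambda) \supset \aux_\cP^{\mathrm L}(\lambda)$. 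You instead expand $(1+y_2)^{-1}$ as a Neumann series, analyze each chain contribution $(y_1-y_2)_{j^*(p),r_0}(y_2)_{r_0,r_1}\cdots(y_2)_{r_{m-1},q}$, and derive the strict row descent $p > p^{(1)} > \cdots > p^{(m)}$ to reach a contradiction with $(p,q) \in \aux_\cP^{\mathrm L}(\lambda)$. Your argument is correct and encodes the same structural fact, but the chain bookkeeping is precisely what the paper's invariant-subspace formulation is designed to avoid: $\cS$ absorbs the induction once and for all, since $\cS\, e_{j_t k_t} \subset \cS$ is a one-line check and the series expansion never needs to be written down. If you want to tighten your write-up, proving the single containment $\cS y_2 \subset \cS$ and then observing $\lambda h_1 \in \cS$ (as $\lambda + X_1 \in \cS$) and $\cS(1+y_2)^{-1} \subset \cS$ would reproduce the paper's streamlined argument.
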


\begin{proof}
Enumerate the positions in $\aux_\cP^{}(\lambda) - \aux_\cP^{\mathrm L}(\lambda)$ as $(i_1,k_1),\dots,(i_r,k_r)$ such that $k_1 \geq \dots \geq k_r$.  For each index $t=1,\dots,r$, let $j_t$ be the column with $(i_t,j_t) \in \supp(\lambda)$.  We claim that the lemma holds if we take $h_i \in  U_n$ to be the element 
\[\label{product-elem} h_i = 1 + \sum_{t=1}^r  (X_i)_{i_tk_t} (\lambda_{i_tj_t})^{-1} e_{j_tk_t} = \prod_{t=1}^{r} \(1 + (X_i)_{i_tk_t} (\lambda_{i_tj_t})^{-1} e_{j_tk_t}\),\] where the factors in the product are multiplied in order from left to right.  (In other words, we evaluate the expression $\prod_{t=1}^r x_t$ as $ x_1x_2\cdots x_r$.)

To show this, we first note that $(j_t,k_t) \in [[n]] - \cP$ for all $t$ by the definitions given in (\ref{adj}) so our elements $h_i \in  U_n$ are well-defined.  We next observe that since $k_1\geq \dots \geq k_r$, we have $e_{j_tk_t} e_{j_{t+1}k_{t+1}} = 0$ for all $t$, and so the given sum and product formulas for $h_i$ are equal.
Since $\lambda e_{j_tk_t} = \lambda_{i_tj_t} e_{i_tk_t}$ by construction, it follows immediately that $\lambda h_i - \lambda = X_i$. 

To prove that $\supp(\lambda h_1h_2^{-1}) \cap \aux_\cP^{\mathrm L}(\lambda) = \varnothing$, consider the subspace $\cS\subset \fkn_\cP$ of matrices whose nonzero positions coincide with or lie below nonzero positions of $\lambda$:
\[ \cS = \{  Y \in \fkn_\cP: Y_{ik} \neq 0 \Rightarrow \lambda_{jk} =0\text{ for all }j> i \}.\]  Clearly $\lambda \in \cS$.  Now observe that for any $t$, $(i_t,k_t) \in \aux_\cP(\lambda) - \aux_\cP^{\mathrm{L}}(\lambda)$ implies that $(i_t,k_t) \notin \adj_{[[n]]}^{\mathrm L}(\lambda)$, and so necessarily $(j,k_t) \notin \supp(\lambda)$ for all $j > i_t$.  It follows from this that both $X_1,X_2 \in \cS$.  Furthermore, if $Y \in \cS$, then $Y' = Ye_{j_tk_t} \in \cS$.  This follows since by construction $(i_t,j_t) \in \supp(\lambda)$, so the nonzero positions of $Y$ in the $j_t$th colum all lie below the $i_t$th row; hence $Y'$ has nonzero positions only in the $k_t$th column below the $i_t$th row, and so $Y' \in \cS$ since $(j,k_t) \notin \supp(\lambda)$ for all $j > i_t$.

It follows immediately that we have $\cS h_2^{-1} \subset \cS$, since if $Y\in \cS$ then $Y (1+ce_{j_tk_t})^{-1} = Y(1-ce_{j_tk_t}) = Y - cY e_{j_tk_t} \in \cS$ for all $t=1,\dots,r$ and $c\in \FF_q$.  Thus, in particular $X_1 h_2^{-1} \in \cS$ and $\lambda h_2^{-1} \in \cS$, so $\lambda h_1h_2^{-1} = X_1h_2^{-1} + \lambda h_2^{-1} \in \cS$.  But $Y \in \cS$ implies that $\supp(Y) \cap \adj_{[[n]]}^{\mathrm L}(\lambda) = \varnothing$, so we certainly have $\supp(\lambda h_1h_2^{-1}) \cap \aux_\cP^{\mathrm L}(\lambda) = \varnothing$.
\end{proof}

We may now prove the theorem.

\begin{proof}[Proof of Theorem \ref{normal-reps}]
In light of the two-sided action of $ U_n$ on $\fkn_\cP$, we know that every superclass of $U_\cP$ is a subset of a superclass $ U_n \lambda  U_n$ of $ U_n$ for some $\lambda \in \sP_\cP$, and that conversely, each superclass $ U_n \lambda  U_n$ of $ U_n$ decomposes as a disjoint union of superclasses of $U_\cP$ of equal order.

Fix $\lambda \in \sP_\cP$.  Then by Lemma \ref{one-sided-orbits}, we know that there are $\frac{ | U_n \lambda  U_n|}{|U_\cP \lambda U_\cP|} = q^{|\adj_{[[n]]}(\lambda)|- |\adj_\cP(\lambda)|}= q^{|\aux_\cP(\lambda)|}$ distinct two-sided $U_\cP$-orbits in $ U_n\lambda  U_n$.  Hence to show that the given map $(\lambda, X) \mapsto \cK_\cP^g$ is a bijection, it suffices to prove that when $\lambda \in \sP_\cP$ is fixed, the $q^{|\aux_\cP(\lambda)|}$ elements of the form $\lambda + \fkr_\lambda(X)$ where $\supp(X) \subset \aux_\cP(\lambda)$ belong to distinct two-sided $U_\cP$-orbits.  

To this end, suppose the contrary: that for some $X_i \in \fkn_\cP$ with $\supp(X_i) \subset \aux_\cP(\lambda)$ for $i=1,2$, we have $x(\lambda + \fkr_\lambda(X_1))y = \lambda + \fkr_\lambda(X_2)$ for some $x,y \in U_\cP$.  Write $(X_i)_{\mathrm L} = g_i \lambda-\lambda$ and $(X_i)_{\rml} = \lambda h_i-\lambda$ for $g_i,h_i \in  U_n$, where $h_i$ is taken to be as in Lemma \ref{technical-lemma}.  Then $\lambda +\fkr_\lambda(X_i) = g_i\lambda h_i$ and we have $\lambda h_1h_2^{-1} y' = x'g_1^{-1}g_2\lambda$, where $x' = g_1^{-1}x^{-1} g_1 \in U_{\cP}$ and $y' = h_2yh_2^{-1} \in U_\cP$ by normality.

By Lemma \ref{one-sided-orbits}, we have $g_1^{-1}g_2 \lambda \in U_\cP(\lambda +X)$ for some $X \in \fkn_\cP$ with $\supp(X) \subset \aux_\cP^{\mathrm L}(\lambda)$.  Similarly, by Lemmas \ref{one-sided-orbits} and \ref{technical-lemma} we have $\lambda h_1h_2^{-1} \in (\lambda + Y)U_\cP$ for some $Y \in \fkn_\cP$ with $\supp(Y) \subset \aux_\cP^{\mathrm R}(\lambda) - \aux_\cP^{\mathrm L}(\lambda)$.  Putting this together, we have \[\lambda h_1h_2^{-1} y' = x'g_1^{-1}g_2\lambda \in U_\cP(\lambda + X) \cap (\lambda + Y)U_\cP.\]  But $\supp(X)\cap \supp(Y) = \varnothing$, so by the characterization of the left and right $U_\cP$-orbits in Lemma \ref{one-sided-orbits}, it necessarily follows that $X=Y = 0$.  Thus, $U_\cP(\lambda + (X_1)_{\mathrm L})  = U_\cP(\lambda +(X_2)_{\mathrm L})$ and $(\lambda + (X_1)_{\rml}) U_\cP = (\lambda +(X_2)_{\rml})U_\cP$, so $(X_1)_{\mathrm L} = (X_2)_{\mathrm L}$ and $(X_1)_{\rml} = (X_2)_{\rml}$ by Lemma \ref{one-sided-orbits}, and consequently $X_1 = X_2$.

This proves that the map $(\lambda,X) \mapsto \cK^g_\cP$ is a bijection.   The last part of the theorem concerning the sizes of superclasses follows directly from Lemma \ref{one-sided-orbits}.\end{proof}

\section{Supercharacter Constructions}\label{supercharacter-section}

Fix a poset $\cP\vartriangleleft [[n]]$.  The constructions and arguments needed to classify the supercharacters of $U_\cP$ closely mirror those of the previous section.  We begin by defining several sets of positions which serve the same purpose as, and are dual to, the sets $\adj$ and $\aux$ above.  Given a set partition $\lambda \in \sP^*_{\cP}$ and a poset $\cQ \in \{ \cP, [[n]]\}$,  
define
\be \label{coadj} \ba \coadj_\cQ^{\mathrm{L}}(\lambda) &= \{ (j,k) \in \cP : \exists\ (i,k) \in \supp(\lambda)\text{ with }(i,j) \in \cQ \},\\
\coadj_\cQ^{\mathrm{R}}(\lambda) &= \{ (i,j) \in \cP : \exists\ (i,k) \in \supp(\lambda)\text{ with }(j,k) \in \cQ \}, \\
\coadj_\cQ(\lambda) &= \coadj_\cQ^{\mathrm{L}}(\lambda) \cup \coadj_\cQ^{\mathrm{R}}(\lambda).
\ea \ee
Note the dependence on $\cP$  in the definition of $\coadj_\cQ^{\mathrm L}$ and $\coadj_\cQ^{\mathrm R}$.  Since we distinguish linear functionals by their domains, and so view $\sP_\cQ^*$ and $\sP_{\cR}^*$ as disjoint sets when $\cQ$ and $\cR$ are distinct posets, this slight abuse of notation does not present any significant ambiguity.  Next define  
\be\label{coaux}\ba \coaux^{\mathrm{L}}_\cQ(\lambda) &= \coadj_{[[n]]}^{\mathrm L} (\lambda) -  \coadj_{\cQ}^{\mathrm{L}}(\lambda),\\
\coaux^{\mathrm{R}}_\cQ(\lambda) &= \coadj_{[[n]]}^{\mathrm R} (\lambda) -  \coadj_{\cQ}^{\mathrm{R}}(\lambda),\\
\coaux_\cQ(\lambda) &= \coadj_{[[n]]}^{} (\lambda) -  \coadj_{\cQ}^{}(\lambda).
\ea
\ee 
These sets of positions are also always subsets of $\cP$, and in particular, they are empty when $\cQ = [[n]]$.  

The sets in both (\ref{coadj}) and (\ref{coaux}) are disjoint from $\supp(\lambda)$.  As in the preceding section, this follows since $\supp(\lambda)$ contains at most one position in each row and column as  $\lambda$ is a set partition, and the positions in (\ref{coadj}) and (\ref{coaux}) each lie either in the same row and strictly to the left of a position in $\supp(\lambda)$, or in the same column and strictly below a position in $\supp(\lambda)$.

\begin{example} \label{supercharacter-example} Suppose $\cP\vartriangleleft [[7]]$ is given as in Example \ref{superclass-example}.   If $\lambda \in \sP_\cP^*$ is the set partition
\[\lambda =1\larc{a}4\larc{b}6|2\larc{c}7|3\larc{d}5= 
\(\begin{array}{ccccccc} 
0 & 0 & 0 & a & 0 & 0 & 0 \\
 & 0 & 0 & 0 & 0 & 0 & c \\
 &  & 0 & 0 & d & 0 & 0 \\
 &  &  & 0 & 0 & b & 0 \\
 &  &  &  & 0 & 0 & 0 \\
 &  &  &  &  & 0 & 0 \\
 &  &  &  &  &  & 0 \end{array}\) \in \fkn_\cP^* \]
then we compute \[ \ba 
&\coadj_{[[7]]}^{\mathrm{L}}(\lambda) = \{ (2,4), (3,7),(4,7),(5,7)\}, \\ 
&\coadj_\cP^{\mathrm{L}}(\lambda) = \{ (4,7),(5,7) \},\\ 
&\coaux_\cP^{\mathrm{L}}(\lambda) = \{ (2,4), (3,7)\},
\ea\qquad\text{and}\qquad\ba
&\coadj_{[[7]]}^{\mathrm{R}}(\lambda) = \{ (1,3), (2,4),(2,5),(2,6)\}, \\ 
&\coadj_\cP^{\mathrm{R}}(\lambda) = \{ (2,4),(2,5) \},\\ 
&\coaux_\cP^{\mathrm{R}}(\lambda) = \{ (1,3), (2,6)\} .
\ea\]  
Thus, $\coadj_{[[7]]}(\lambda)$, $\coadj_{\cP}(\lambda)$, and $\coaux_\cP(\lambda)$  are the following sets of positions in a $7\times 7$ matrix:
\[ \barr{c} \(\begin{array}{ccccccc} 
0 & 0 & {\color{Green}\blacksquare} & {\color{Gray}a} & 0 & 0  & 0  \\
 & 0 & 0& {\color{Green}\blacksquare} & {\color{Green}\blacksquare} & {\color{Green}\blacksquare}  & {\color{Gray}{c}}  \\
 &  & 0 & 0 &  {\color{Gray}d}& 0  & {\color{Green}\blacksquare}  \\
 &  &  & 0 & 0 & {\color{Gray}b} & {\color{Green}\blacksquare}  \\
 &  &  &  & 0 & 0 & {\color{Green}\blacksquare} \\
 &  &  &  &  & 0 & 0 \\
 &  &  &  &  &  & 0 \end{array}\), \\
 {\color{Green}\blacksquare} \in \coadj_{[[n]]}(\lambda)
 \earr
 \quad  \barr{c}\(\begin{array}{ccccccc} 
0 & 0& 0& {\color{Gray}a} & 0 & 0  & 0  \\
 & 0 &0 & {\color{Blue}\blacksquare} & {\color{Blue}\blacksquare} & 0  & {\color{Gray}{c}}  \\
 &  & 0 & 0 &  {\color{Gray}d}& 0  & 0  \\
 &  &  & 0 & 0& {\color{Gray}b} & {\color{Blue}\blacksquare}  \\
 &  &  &  & 0 & 0& {\color{Blue}\blacksquare} \\
 &  &  &  &  & 0 & 0\\
 &  &  &  &  &  & 0 \end{array}\),\\
 {\color{Blue}\blacksquare} \in \coadj_\cP(\lambda)
 \earr
 \quad \barr{c}
  \(\begin{array}{ccccccc} 
0 & 0 & {\color{Red}\blacksquare} & {\color{Gray}a} & 0 & 0  & 0  \\
 & 0 & 0 & 0& 0& {\color{Red}\blacksquare}  & {\color{Gray}{c}}  \\
 &  & 0 & 0&  {\color{Gray}d}& 0  & {\color{Red}\blacksquare}  \\
 &  &  & 0 & 0 & {\color{Gray}b} & 0  \\
 &  &  &  & 0 & 0 & 0 \\
 &  &  &  &  & 0 & 0 \\
 &  &  &  &  &  & 0 \end{array}\). \\ 
 {\color{Red}\blacksquare} \in \coaux_\cP(\lambda)
 \earr\] 
 \end{example}
 
As in Section \ref{superclass-section},  our primary goal is to construct a bijection
\[ \barr{ccc}  
\Bigl\{ 
\text{Supercharacters of $U_\cP$}
\Bigr\}
 & \leftrightarrow & 
 \Bigl\{
(\lambda, X) :  \text{$\lambda \in \sP_\cP^*$ and $\eta \in \fkn_\cP^*$, $\supp(\eta) \subset \coaux_\cP(\lambda)$} 
\Bigr\}.
\earr\]
For this, we first classify the one-sided $U_\cP$-orbits in $\fkn_\cP^*$ with the following lemma.

\begin{lemma} \label{one-sided-coorbits} Fix a poset $\cP\vartriangleleft [[n]]$ and a set partition $\lambda \in \sP_\cP^*$, and let $\cQ \in \{\cP, [[n]]\}$.  
\begin{enumerate}
\item[(a)] Each left $U_\cQ$-orbit in $U_n\lambda$ has the form
 $\left\{  \lambda +\eta +\nu \in \fkn_\cP^*: \supp(\nu) \subset \coadj_\cQ^{\mathrm L}(\lambda)\right\} $ for a unique functional $\eta \in \fkn_\cP^*$ with $\supp(\eta)\subset \coaux_\cQ^{\mathrm L}(\lambda)$.
 
 \item[(b)] Each right $U_\cQ$-orbit in $\lambda U_n$ is of the form
 $  \left\{  \lambda +\mu +\nu  \in \fkn_\cP : \supp(\nu) \subset \coadj_\cQ^{\mathrm R}(\lambda)\right\}
$ for a unique functional $\mu \in \fkn_\cP^*$ with $\supp(\mu) \subset \coaux_\cQ^{\mathrm R}(\lambda)$.  
\item[(c)] Consequently $|U_\cQ \lambda U_\cQ| = q^{|\coadj_\cQ(\lambda)|}$.
\end{enumerate}

\end{lemma}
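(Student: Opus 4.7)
The plan is to mirror the proof of Lemma~\ref{one-sided-orbits}, dualizing the action and invoking the normality of $\cP$ at a single key point. For part~(a), I would begin with the direct computation that for $g \in U_\cQ$ and $(j,k)\in\cP$,
\[
(g\lambda - \lambda)_{jk} \ =\ \sum_{i < j} (g^{-1})_{ij}\, \lambda_{ik},
\]
which vanishes unless there is some $i<j$ with $(i,k) \in \supp(\lambda)$ and $(i,j) \in \cQ$; that is, unless $(j,k) \in \coadj_\cQ^{\mathrm L}(\lambda)$. This gives the containment $U_\cQ\lambda \subseteq \{\lambda + \nu : \supp(\nu) \subset \coadj_\cQ^{\mathrm L}(\lambda)\}$. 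For the reverse containment, I would observe that $U_\cQ \lambda - \lambda$ is in fact a linear subspace of $\fkn_\cP^*$: writing $g = 1+Y$ with $Y \in \fkn_\cQ$, the difference $g\lambda - \lambda$ is the functional $X \mapsto \lambda(WX)$ where $W = (1+Y)^{-1} - 1$ ranges linearly over $\fkn_\cQ$ as $Y$ does. Choosing $W = \lambda_{ik}^{-1} e_{ij}$ for $(i,k) \in \supp(\lambda)$ and $(i,j) \in \cQ$ then produces the matrix unit at position $(j,k)$ as a difference $g\lambda - \lambda$. Since $\coaux^{\mathrm L}_{[[n]]}(\lambda) = \varnothing$, this disposes of the case $\cQ = [[n]]$ outright.

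For $\cQ = \cP$, the two containments above give $|U_n\lambda|/|U_\cP\lambda| \geq q^{|\coaux_\cP^{\mathrm L}(\lambda)|}$, which matches the number of $\eta \in \fkn_\cP^*$ with $\supp(\eta) \subset \coaux_\cP^{\mathrm L}(\lambda)$. It therefore suffices to show that $U_\cP(\lambda + \eta) = U_\cP\lambda + \eta$ for each such $\eta$, so that distinct $\eta$ index distinct $U_\cP$-orbits. The crucial step is to verify that $\supp(g\eta - \eta) \subset \coadj_\cP^{\mathrm L}(\lambda)$ for every $g \in U_\cP$. Given $(j,k)$ in this support, the opening computation produces $l < j$ with $(l,k) \in \supp(\eta) \subset \coadj_{[[n]]}^{\mathrm L}(\lambda)$ and $(l,j) \in \cP$, and then some $i < l$ with $(i,k) \in \supp(\lambda)$. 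Here I would invoke the contrapositive of normality~\eqref{normal-def}: from $(l,j) \in \cP$ and $i \leq l$ one concludes $(i,j) \in \cP$, so $(j,k) \in \coadj_\cP^{\mathrm L}(\lambda)$. A size count within the ambient $U_n$-orbit then promotes the resulting inclusion $U_\cP(\lambda + \eta) \subseteq U_\cP\lambda + \eta$ to equality.

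Part~(b) follows by the symmetric argument with the roles of left and right interchanged throughout, replacing $\coadj_\cQ^{\mathrm L}$ and $\coaux_\cQ^{\mathrm L}$ by $\coadj_\cQ^{\mathrm R}$ and $\coaux_\cQ^{\mathrm R}$. Part~(c) then reduces, via the standard identity $|U_\cQ\lambda U_\cQ| = |U_\cQ\lambda|\cdot|\lambda U_\cQ|/|U_\cQ\lambda \cap \lambda U_\cQ|$, to identifying $|U_\cQ\lambda \cap \lambda U_\cQ| = q^{|\coadj_\cQ^{\mathrm L}(\lambda) \cap \coadj_\cQ^{\mathrm R}(\lambda)|}$, which is immediate from the explicit orbit descriptions in (a) and (b). The main technical obstacle is the normality step: it is the single place where the hypothesis $\cP \vartriangleleft [[n]]$ genuinely enters, and I would phrase the application in the exact form of~\eqref{normal-def} to avoid confusion between $\cP$ and $[[n]]$ when the target position lies near the boundary of $\cP$.
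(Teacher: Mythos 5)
Your proposal is correct and takes essentially the same approach as the paper: the paper's proof simply records the dual coordinate identity $(g^{-1}\lambda - \lambda)_{jk} = \sum_{i<j} g_{ij}\lambda_{ik}$ and asserts that the rest follows the structure of Lemma~\ref{one-sided-orbits}, which is precisely the dualization you carry out in detail (linearity of $U_\cQ\lambda - \lambda$ for the reverse inclusion, the orbit count, showing $U_\cP(\lambda+\eta) = U_\cP\lambda+\eta$ via $\supp(g\eta-\eta)\subset\coadj^{\mathrm L}_\cP(\lambda)$, and the invocation of normality at that one step). Your application of the contrapositive of \eqref{normal-def} is clean and correct, and part (c) via $|U_\cQ\lambda\cap\lambda U_\cQ| = q^{|\coadj^{\mathrm L}_\cQ(\lambda)\cap\coadj^{\mathrm R}_\cQ(\lambda)|}$ matches the intended argument.
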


\begin{proof}
After one verifies for $g \in U_n$ the identities $(g^{-1}\lambda - \lambda)_{jk} =  \sum_{i<j} g_{ij} \lambda_{ik}$ when $(j,k) \in \cP$ and $(\lambda g^{-1}-\lambda)_{ij} = \sum_{j<k} g_{jk}\lambda_{ik}$ when $(i,j) \in \cP$
 (which are given as Theorem 4.2 in \cite{DT09}  when $g \in U_\cP$), the proof of this lemma follows exactly the same structure as the proof of Lemma \ref{one-sided-orbits}.
\end{proof}

For any $\lambda \in \fkn_\cP^*$, we  again have a natural surjection 
\be\label{co-natural-surj} \barr {cccr} U_\cP \lambda \times \lambda U_\cP  & \rightarrow & U_\cP \lambda U_\cP \\ 
 (g\lambda, \lambda h )& \mapsto &g\lambda h & \qquad\text{for $g,h \in U_\cP$.}\earr\ee  Once again, when $\lambda \in \sP_\cP^*$ and $\cP\vartriangleleft [[n]]$, we can explicitly describe this map without reference to the elements $g,h$ in the following way.  Given $\lambda \in \sP_\cP^*$, define a $\FF_q$-bilinear product $*_\lambda : \fkn_\cP^* \times \fkn_\cP^* \rightarrow \fkn_\cP^*$ (with slight abuse of notation using the same symbol as above) by $ (\eta,\mu) \mapsto \eta *_\lambda \mu$, where 
\[ \label{co*-lambda}  (\eta *_\lambda \mu)_{jk} = \eta_{jk} + \mu_{jk} + \sum_{\substack{i< j<k< l \\ (i,l)\in\supp(\lambda)}} \eta_{jl}\mu_{ik} (\lambda_{il})^{-1},\qquad\text{for }\eta,\mu\in \fkn_\cP^*\text{ and }(j,k) \in \cP.\] This product characterizes the map (\ref{co-natural-surj}) by the following lemma:

\begin{lemma}  Assume $\cP\vartriangleleft [[n]]$ and $\lambda \in \sP_\cP^*$.  If $\eta = g\lambda -\lambda$ and $\mu = \lambda h - \lambda$ for $g,h \in  U_n$, then $\eta*_\lambda \mu = g\lambda h - \lambda$.  
\end{lemma}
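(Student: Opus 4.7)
The plan is to mirror the proof of the preceding (algebra-side) lemma, but working with the dual left/right actions of $U_n$ on $\fkn_\cP^*$ (which are well defined on $\fkn_\cP^*$ since $\cP\vartriangleleft[[n]]$, by Corollary \ref{supernormal}). For $(j,k)\in\cP$, I would unwind
\[
(g\lambda h)_{jk} = (g\lambda h)(e_{jk}) = \lambda\bigl(g^{-1} e_{jk} h^{-1}\bigr)
\]
and expand $g^{-1} e_{jk} h^{-1} = \sum_{i\le j,\,k\le l}(g^{-1})_{ij}(h^{-1})_{kl}\,e_{il}$. Applying $\lambda$ term by term and using the identities
\[
\eta_{jk} = \sum_{i<j}(g^{-1})_{ij}\lambda_{ik},\qquad \mu_{jk}=\sum_{k<l}(h^{-1})_{kl}\lambda_{jl}
\]
(which follow from the formulas invoked in the proof of Lemma \ref{one-sided-coorbits} after substituting $g\mapsto g^{-1}$), isolating the contributions with $i=j$ or $l=k$ yields the decomposition
\[
(g\lambda h - \lambda)_{jk} = \eta_{jk} + \mu_{jk} + \sum_{i<j,\, k<l}(g^{-1})_{ij}(h^{-1})_{kl}\,\lambda_{il}.
\]

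The heart of the argument is then to match the cross term to the sum appearing in $\eta *_\lambda \mu$. Since $\lambda\in\sP_\cP^*$ has at most one nonzero entry in any row and in any column, for each $(i,l)\in\supp(\lambda)$ with $i<j<k<l$ both of the formulas above collapse to a single nonzero summand:
\[
\eta_{jl} = (g^{-1})_{ij}\lambda_{il}\qquad\text{and}\qquad \mu_{ik} = (h^{-1})_{kl}\lambda_{il}.
\]
Multiplying and dividing by $\lambda_{il}$ recovers $(g^{-1})_{ij}(h^{-1})_{kl}\lambda_{il}$. Pairs $(i,l)\notin\supp(\lambda)$ contribute zero to the cross sum via the factor $\lambda_{il}$, so
\[
\sum_{i<j,\,k<l}(g^{-1})_{ij}(h^{-1})_{kl}\lambda_{il} \;=\; \sum_{\substack{i<j<k<l\\(i,l)\in\supp(\lambda)}}\eta_{jl}\mu_{ik}(\lambda_{il})^{-1},
\]
which is precisely the cross term in $(\eta *_\lambda \mu)_{jk}$. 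Substituting back gives $(g\lambda h - \lambda)_{jk} = (\eta *_\lambda \mu)_{jk}$ for every $(j,k)\in\cP$, as desired.

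I do not anticipate any genuine obstacle: the substantive input is the set-partition condition on $\lambda$, which forces both $\eta_{jl}$ and $\mu_{ik}$ to collapse onto the unique nonzero entry of $\lambda$ in the relevant row and column. Everything else is direct bookkeeping, formally dual to the argument in the previous lemma with rows and columns interchanged.
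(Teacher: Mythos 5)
Your proof is correct and follows essentially the same route as the paper's: expand $(g\lambda h)_{jk}=\lambda(g^{-1}e_{jk}h^{-1})$, separate off the terms giving $\lambda_{jk}$, $\eta_{jk}$, $\mu_{jk}$, and then use the at-most-one-nonzero-entry-per-row-and-column property of $\lambda\in\sP_\cP^*$ to identify each surviving cross term with $\eta_{jl}\mu_{ik}(\lambda_{il})^{-1}$. The only cosmetic difference is that the paper first replaces $g,h$ by their inverses to avoid carrying $(g^{-1})_{ij}$, $(h^{-1})_{kl}$ around, whereas you work with $g,h$ directly; you also supply the expansion step that the paper leaves to the reader.
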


\begin{proof}
Replace $g,h$ with their inverses, so that $\eta = g^{-1}\lambda - \lambda$ and $\mu = \lambda h^{-1}-\lambda$.  One can then check that
\be \label{glh*} (g^{-1}\lambda h^{-1} - \lambda)_{jk} = \eta_{jk} + \mu_{jk} + \sum_{\substack{i < j <k < l \\ (j,k) \in \supp(\lambda)}} g_{ij}\lambda_{il}h_{kl},\qquad\text{for }(j,k) \in \cP.\ee The set partition $\lambda$ has at most one nonzero entry in each row and column; therefore, if $(i,l)\in \supp(\lambda)$ then $\lambda_{i'l} = \lambda_{il'} = 0$ for all $i'\neq i$ and $l'\neq l$, and so 
$g_{ij}\lambda_{il} = \sum_{i'< j} g_{i'j}\lambda_{i'l} = \eta_{jl}$ and 
$\lambda_{il} h_{kl} = \sum_{k<l'} \lambda_{il'} h_{kl'} = \mu_{ik}.$
Hence if $(i,l)\in \supp(\lambda)$ then $g_{ij}\lambda_{il}h_{kl} = (g_{ij}\lambda_{il})(\lambda_{il}h_{kl})(\lambda_{il})^{-1} = \eta_{jl}\mu_{ik}(\lambda_{il})^{-1}$, and after  substituting this into (\ref{glh*}), we obtain $g\lambda h -\lambda = \eta*_\lambda \mu$.
\end{proof}

\begin{example}  Suppose $\cP\vartriangleleft[[7]]$ and $\lambda \in \sP_\cP^*$ are as in Example \ref{supercharacter-example}.  Write $e_{ij}^* \in \fkn_\cP^*$ to denote the linear functional with $e_{ij}^*(X) = X_{ij}$.  Then by Lemma \ref{one-sided-coorbits}, the sets \[\{ \lambda +  r e_{24}^* + s e_{57}^*  : r,s \in \FF_q\}\quad\text{and}\quad \{ \lambda +  t e_{13}^* + u e_{26}^*  : t,u \in \FF_q\}\] give representatives of the distinct left and right $U_\cP$-orbits in $U_n\lambda$ and  $\lambda U_n$, respectively.  If $\eta = r e_{24}^* + s e_{37}^*$ and $\mu =t e_{13}^* + u e_{26}^*$ for some $r,s,t,u\in \FF_q$, then 
\[ \eta*_\lambda \mu = r e_{24}^* + s e_{37}^* + t e_{13}^* + u e_{26}^* + c^{-1} sue_{36}^* .\]
 \end{example}

Fix $\lambda \in \sP^*_\cP$.  By the same order considerations as in the superclass case, we know that the set of two-sided $U_\cP$-orbits contained in $ U_n\lambda  U_n$ is in bijection with the set of $\eta \in \fkn_\cP^*$ with $\supp(\eta)\subset \coaux_\cP(\lambda)$.  The difficulty is again to assign each such $\eta$ to a representative of a distinct two-sided $U_\cP$-orbit.  Mirroring the superclass case, we have the following (non-canonical) construction.

Given $\eta \in \fkn_\cP^*$ with $\supp(\eta) \subset \coaux_\cP(\lambda)$, let $\eta_{\mathrm L}$ and $\eta_{\rml}$ be the unique elements of $\fkn_\cP^*$ such that 
\[ \eta = \eta_{\mathrm L} + \eta_{\rml},\qquad \supp(\eta_{\mathrm L}) \subset \coaux_\cP^{\mathrm L}(\lambda),\qquad 
 \supp(\eta_{\rml}) \subset \coaux_\cP^{\mathrm R}(\lambda) - \coaux_\cP^{\mathrm L}(\lambda).\]  Now define a map $\fkr^*_\lambda : \{ \eta \in \fkn_\cP^* : \supp(\eta) \subset \coaux_\cP(\lambda) \} \rightarrow  U_n\lambda  U_n - \lambda$ by
 \[ \label{fkr*} \fkr^*_\lambda (\eta) = \eta_\mathrm{L} *_\lambda \eta_{\rml},\qquad\text{for }\eta\in \fkn_\cP\text{ with }\supp(\eta) \subset \coaux_\cP(\lambda).\]  By Lemma \ref{one-sided-coorbits}, $\eta_{\mathrm L} \in  U_n \lambda - \lambda$ and $\eta_{\rml} \in \lambda  U_n -\lambda$, so $\fkr^*_\lambda$ maps $\eta$ to an element of $ U_n\lambda  U_n -\lambda$.  We can now classify the supercharacters of $U_\cP$.

\begin{theorem}\label{co-normal-reps} Fix a poset $\cP\vartriangleleft [[n]]$.  Given $\lambda \in \sP_\cP^*$ and $\eta \in \fkn_\cP^*$ with $\supp(\eta) \subset \coaux_\cP(\lambda)$, define 
\[ \chi_\cP^{(\lambda,\eta)} \overset{\mathrm{def}}= \text{ the supercharacter indexed by $\lambda + \fkr^*_\lambda(\eta) \in \fkn_\cP^*$}.\]
\begin{enumerate}
\item[(a)] The following map is  a bijection:
\[\label{supercharacters-defn0} \barr{ccc}  
\Bigl\{
(\lambda, \eta) :  \text{$\lambda \in \sP_\cP^*$ and $\eta \in \fkn_\cP^*$, $\supp(\eta) \subset \coaux_\cP(\lambda)$} 
\Bigr\} & \to & 
\Bigl\{ 
\text{Supercharacters of $U_\cP$}
\Bigr\} \\
(\lambda, \eta) &\mapsto &\chi_\cP^{(\lambda,\eta)}. 
\earr\] 
\item[(b)]   For $\lambda \in \sP_\cP^*$ and $\eta \in \fkn_\cP^*$ with $\supp(\eta) \subset \coaux_\cP(\lambda)$, we have
\[ \chi_\cP^{(\lambda,\eta)}(1) = q^{|\coadj_\cP^{\mathrm{L}}(\lambda)|} = q^{|\coadj_\cP^{\mathrm{R}}(\lambda)|}\qquad\text{and}\qquad \left\langle \chi_\cP^{(\lambda,\eta)}, \chi_\cP^{(\lambda,\eta)} \right \rangle_{U_\cP} = q^{|\coadj_\cP^{\mathrm L}(\lambda) \cap \coadj_\cP^{\mathrm R}(\lambda)|},\]
 both of which do not depend on $\eta$.
 \end{enumerate}
 \end{theorem}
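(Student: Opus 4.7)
The plan is to mirror the proof of Theorem \ref{normal-reps} throughout, exploiting the parallel structure the paper has set up between $\adj$/$\aux$ and $\coadj$/$\coaux$, and between Lemma \ref{one-sided-orbits} and Lemma \ref{one-sided-coorbits}. Since $\cP \vartriangleleft [[n]]$, Corollary \ref{supernormal} lets $U_n$ act on both sides of $\fkn_\cP^*$, so every two-sided $U_\cP$-orbit lies inside some two-sided $U_n$-orbit $U_n \lambda U_n$ with $\lambda \in \sP_\cP^*$ (by the classification (\ref{U_n-classification}) applied to $\fkn_n^*$ and restricted to the $U_n$-invariant subspace $\fkn_\cP^*$). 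By Lemma \ref{one-sided-coorbits}(c), the number of such $U_\cP$-orbits inside a fixed $U_n \lambda U_n$ equals $|U_n\lambda U_n|/|U_\cP\lambda U_\cP| = q^{|\coaux_\cP(\lambda)|}$, which exactly matches the count of $\eta\in\fkn_\cP^*$ with $\supp(\eta)\subset \coaux_\cP(\lambda)$. It therefore suffices to show that the $q^{|\coaux_\cP(\lambda)|}$ functionals $\lambda + \fkr^*_\lambda(\eta)$ are pairwise inequivalent under the two-sided $U_\cP$-action.

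To prove pairwise inequivalence I will follow the argument for Theorem \ref{normal-reps} almost verbatim. Suppose $x(\lambda + \fkr^*_\lambda(\eta_1))y = \lambda + \fkr^*_\lambda(\eta_2)$ for some $x,y\in U_\cP$, and write $(\eta_i)_{\mathrm L} = g_i\lambda - \lambda$, $(\eta_i)_{\rml} = \lambda h_i - \lambda$ with $g_i, h_i \in U_n$. Normality of $U_\cP$ lets me rearrange this as $\lambda h_1h_2^{-1} y' = x' g_1^{-1}g_2 \lambda$ for suitable $x',y' \in U_\cP$. The key input is a dual version of Lemma \ref{technical-lemma}: given any two $\eta_1,\eta_2 \in \fkn_\cP^*$ with supports in $\coaux_\cP(\lambda) - \coaux_\cP^{\mathrm L}(\lambda)$, one can choose $h_i\in U_n$ realizing $\lambda h_i - \lambda = (\eta_i)_{\rml}$ such that $\supp(\lambda h_1 h_2^{-1}) \cap \coaux_\cP^{\mathrm L}(\lambda) = \varnothing$. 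Its proof constructs $h_i$ as an explicit product of elementary matrices $1 + c \cdot e_{jk}$ (now with $(j,k)$ chosen so that $e_{jk}\lambda$ picks out the target entry of $\eta_i$) indexed in a carefully chosen order, and then argues, using $\cP\vartriangleleft [[n]]$ as before, that right-multiplication by the $h_i^{-1}$ preserves the subspace of functionals whose support lies at or to the right of $\supp(\lambda)$ in the appropriate sense. Granting this, Lemma \ref{one-sided-coorbits} then forces the $\coaux^{\mathrm L}$ part of $g_1^{-1}g_2\lambda - \lambda$ and the $\coaux^{\mathrm R}-\coaux^{\mathrm L}$ part of $\lambda h_1h_2^{-1}-\lambda$ to agree, but their allowed supports are disjoint, so both vanish, giving $U_\cP(\lambda+(\eta_1)_{\mathrm L}) = U_\cP(\lambda+(\eta_2)_{\mathrm L})$ and $(\lambda+(\eta_1)_{\rml})U_\cP = (\lambda+(\eta_2)_{\rml})U_\cP$. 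Lemma \ref{one-sided-coorbits} then yields $(\eta_1)_{\mathrm L} = (\eta_2)_{\mathrm L}$ and $(\eta_1)_{\rml} = (\eta_2)_{\rml}$, i.e.\ $\eta_1 = \eta_2$. The construction and verification of this technical lemma is the main obstacle, since the combinatorial ordering argument must be redone for the dual action, but all the ingredients (the normality condition, the fact that $\sP_\cP^*$ has at most one entry per row and column) are the same.

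For part (b), set $\lambda' = \lambda + \fkr^*_\lambda(\eta)$. Since $\lambda' \in U_n\lambda U_n$ and normality of $U_\cP$ forces $U_n\lambda U_n$ to decompose into left (respectively, right) $U_\cP$-orbits of common cardinality, I get $|U_\cP\lambda'| = |U_\cP\lambda| = q^{|\coadj_\cP^{\mathrm L}(\lambda)|}$ and $|\lambda' U_\cP| = |\lambda U_\cP| = q^{|\coadj_\cP^{\mathrm R}(\lambda)|}$ from Lemma \ref{one-sided-coorbits}. The formula $\chi_\cP^\lambda(1) = |U_\cP\lambda|$ (immediate from (\ref{formula}) at $g=1$) then gives the stated dimension, and the two expressions coincide because for any algebra group the left and right orbits of a functional have equal size (Lemma 3.1 of \cite{DI06}). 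For the inner product, the formula stated in the preliminaries gives $\langle \chi_\cP^{(\lambda,\eta)}, \chi_\cP^{(\lambda,\eta)}\rangle_{U_\cP} = |U_\cP\lambda' \cap \lambda' U_\cP|$, and using the identity $|U_\cP\lambda' U_\cP| = |U_\cP\lambda'|\,|\lambda' U_\cP|/|U_\cP\lambda'\cap \lambda' U_\cP|$ together with Lemma \ref{one-sided-coorbits}(c) yields
\[ |U_\cP\lambda'\cap \lambda' U_\cP| = q^{|\coadj_\cP^{\mathrm L}(\lambda)| + |\coadj_\cP^{\mathrm R}(\lambda)| - |\coadj_\cP(\lambda)|} = q^{|\coadj_\cP^{\mathrm L}(\lambda)\cap \coadj_\cP^{\mathrm R}(\lambda)|}, \]
which depends only on $\lambda$. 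This finishes part (b) and hence the theorem.
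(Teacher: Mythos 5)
Your proposal is correct and follows essentially the same route as the paper: the paper's proof of Theorem \ref{co-normal-reps} is stated as a verbatim repetition of the proof of Theorem \ref{normal-reps} with $\fkn_\cP$ replaced by $\fkn_\cP^*$ and references updated to Lemmas \ref{one-sided-coorbits} and \ref{co-technical-lemma}, which is exactly the strategy you carry out. Your derivation of part (b) from Lemma \ref{one-sided-coorbits} and the inner-product formula in the preliminaries also matches the intended argument.
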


\begin{remark}
A much simpler map from pairs $(\lambda,\eta)$ to supercharacters of $U_\cP$  would assign  $(\lambda,\eta)$ to the supercharacter indexed by $\lambda+\eta \in \fkn_\cP^*$.  However, as in the superclass case, this naive map  fails to be a bijection.  One sees this by examining essentially the same counterexample as in the remark following Theorem \ref{normal-reps}.  Namley, take $\cP\vartriangleleft[[7]]$ to be as in Example \ref{superclass-example} and define $\mu,\nu \in \sP_\cP^*$ by $\mu = 1|2|3|4\larc{a}6|5|7$ and $\nu = 1\larc{b}7|2|3|4\larc{a}6|5$ for some $a,b \in \FF_q^\times$.  One checks that $\coaux_\cP(\mu) = \coaux_\cP(\nu)= \{ (1,6),(2,7)\}$, yet if $\eta = e^*_{16}+e^*_{27} \in \fkn_\cP^*$ then $\mu+\eta$ and $\nu+\eta$ belong to the same two-sided $U_\cP$ orbit and so index the same supercharacter.
 \end{remark}

Our exposition of the proof of this result is not nearly as long as in the superclass case.  The proof likewise depends on a technical lemma.

\begin{lemma} \label{co-technical-lemma} Fix a poset $\cP\vartriangleleft [[n]]$ and $\lambda \in \sP_\cP^*$, and let $\eta_i \in \fkn_\cP$  have $\supp(\eta_i)\subset \coaux_\cP(\lambda) - \coaux_\cP^{\mathrm L}(\lambda)$ for $i=1,2$.  Then there are elements $h_1,h_2 \in U_n$ with  $\eta_i = \lambda h_i - \lambda$  such that $\supp(\lambda h_1h_2^{-1}) \cap \coaux_\cP^{\mathrm L}(\lambda) = \varnothing$.

\end{lemma}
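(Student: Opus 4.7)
The plan is to dualize the construction from Lemma \ref{technical-lemma}. I would enumerate the positions of $\coaux_\cP(\lambda) - \coaux_\cP^{\mathrm L}(\lambda) \subset \coaux_\cP^{\mathrm R}(\lambda)$ as $(a_1,b_1),\dots,(a_r,b_r)$, this time sorted so that $b_1 \geq b_2 \geq \cdots \geq b_r$. For each $t$, let $c_t$ be the unique column with $(a_t,c_t) \in \supp(\lambda)$: existence follows from $(a_t,b_t) \in \coadj_{[[n]]}^{\mathrm R}(\lambda)$, uniqueness from $\lambda \in \sP_\cP^*$, and $b_t < c_t$ is automatic.  The same argument used in the proof of Lemma \ref{technical-lemma} shows that $(a_t,b_t) \in \coaux_\cP(\lambda) - \coaux_\cP^{\mathrm L}(\lambda)$ forces both $(a_t,b_t) \notin \coadj_{[[n]]}^{\mathrm L}(\lambda)$ and $(b_t,c_t) \notin \cP$. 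I would then set
\[ h_i = 1 - \sum_{t=1}^{r} (\eta_i)_{a_tb_t}(\lambda_{a_tc_t})^{-1} e_{b_tc_t} = \prod_{t=1}^{r}\bigl(1 - (\eta_i)_{a_tb_t}(\lambda_{a_tc_t})^{-1} e_{b_tc_t}\bigr) \in U_n, \]
the factors being multiplied in order from left to right. The ordering choice makes the sum and product agree, since for $s<t$ we have $c_s > b_s \geq b_t$, hence $e_{b_sc_s}e_{b_tc_t}=0$.  A direct computation of $(\lambda h_i - \lambda)_{ij} = -\sum_t \alpha_t\, \delta_{j, b_t}\, \lambda_{i, c_t}$, combined with the fact that column $c_t$ of $\lambda$ has its unique nonzero entry at row $a_t$, then verifies $\lambda h_i - \lambda = \eta_i$.

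For the main claim $\supp(\lambda h_1 h_2^{-1}) \cap \coaux_\cP^{\mathrm L}(\lambda) = \varnothing$, I would introduce the dual of the subspace $\cS$ from Lemma \ref{technical-lemma}:
\[ \cS^* = \{\mu \in \fkn_\cP^* : \mu_{jk} \neq 0 \implies \lambda_{ik} = 0 \text{ for all } i < j\}, \]
the space of functionals supported on positions lying weakly above the (at most) one $\lambda$-entry in each column.  One immediately has $\lambda \in \cS^*$, and the observation $(a_t,b_t) \notin \coadj_{[[n]]}^{\mathrm L}(\lambda)$ places $\eta_1,\eta_2 \in \cS^*$.  It then suffices to show that the right action of each factor $1 \pm \beta\, e_{b_tc_t}$ preserves $\cS^*$, for iterating over the factors of $h_1$ and $h_2^{-1}$ would give $\lambda h_1 h_2^{-1} \in \cS^*$; since $\cS^*$ consists precisely of the functionals whose support avoids $\coadj_{[[n]]}^{\mathrm L}(\lambda) \supseteq \coaux_\cP^{\mathrm L}(\lambda)$, the desired disjointness would follow.

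I expect the main obstacle to be the verification of this closure property. From the formula $(\mu h)_{jk} = \mu_{jk} - \beta\,\delta_{k,b_t}\,\mu_{j,c_t}$, a potential violation of the $\cS^*$-condition can arise only in column $b_t$, driven by a nonzero $\mu_{j,c_t}$ together with $\lambda_{i,b_t} \neq 0$ for some $i<j$.  But $\mu \in \cS^*$ and $\mu_{j,c_t}\neq 0$ force $\lambda_{i,c_t}=0$ for all $i<j$, so the existence of $(a_t,c_t)\in\supp(\lambda)$ requires $a_t \geq j$.  The property $(a_t,b_t)\notin\coadj_{[[n]]}^{\mathrm L}(\lambda)$ then rules out any $\lambda$-entry in column $b_t$ strictly above row $a_t$, contradicting the existence of such $i<j\leq a_t$.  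This resolves the closure claim and completes the proof.
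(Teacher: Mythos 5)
Your $\cS^*$-closure argument is sound and is exactly the dual of the paper's $\cS$-argument in Lemma \ref{technical-lemma}, so that part is on the right track. The construction of $h_i$, however, contains a genuine error.

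With the paper's convention $(\lambda g)(X) = \lambda(Xg^{-1})$, one has $(\lambda h_i - \lambda)_{ij} = \sum_{k>j}(h_i^{-1})_{jk}\lambda_{ik}$, so the identity $\lambda h_i - \lambda = \eta_i$ depends on $h_i^{-1}$, not on $h_i$. Writing $M = \sum_t (\eta_i)_{a_tb_t}(\lambda_{a_tc_t})^{-1} e_{b_tc_t}$, what one needs is exactly $h_i^{-1} = 1 + M$, which is why the paper sets $h_i = (1+M)^{-1}$. Your choice $h_i = 1 - M$ instead has $h_i^{-1} = 1 + M + M^2 + \cdots$, and $M^2$ need not vanish: $e_{b_sc_s}e_{b_tc_t} \neq 0$ whenever $c_s = b_t$, and your ordering $b_1 \geq \cdots \geq b_r$ only kills these products for $s < t$ (which makes the sum-equals-product identity for $h_i$ itself hold), not for $s > t$ — and those are precisely the terms that enter the expansion of $h_i^{-1}$. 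Concretely, take $n=5$, $\cP = \{(1,3),(1,4),(1,5),(2,3),(2,4),(2,5)\}$ (normal in $[[5]]$), and $\supp(\lambda) = \{(1,5),(2,4)\}$. Then $\coaux_\cP(\lambda) - \coaux_\cP^{\mathrm L}(\lambda) = \{(1,3),(1,4),(2,3)\}$ with associated pairs $(b,c) \in \{(3,5),(4,5),(3,4)\}$; here $e_{34}e_{45} = e_{35} \neq 0$, and if $\eta_i$ is nonzero at $(1,4)$ and $(2,3)$ but zero at $(1,3)$, your $h_i$ yields $(\lambda h_i - \lambda)_{13} = (\eta_i)_{14}(\eta_i)_{23}(\lambda_{24})^{-1} \neq 0 = (\eta_i)_{13}$, so $\lambda h_i - \lambda \neq \eta_i$. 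Your asserted first-order formula $(\lambda h_i - \lambda)_{ij} = -\sum_t \alpha_t \delta_{j,b_t}\lambda_{i,c_t}$ also has the wrong sign (it should be $+$; as written it evaluates to $-(\eta_i)_{a_sb_s}$ at $(a_s,b_s)$). Replacing your definition with $h_i = (1 + M)^{-1}$, as in the paper, fixes both problems at once, and then your $\cS^*$-argument applies verbatim since $h_1$ and $h_2^{-1}$ remain products of elementary factors of the form $1 \pm \beta e_{b_tc_t}$.
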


\begin{proof}
Enumerate the positions in $\coaux_\cP^{}(\lambda) - \coaux_\cP^{\mathrm L}(\lambda)$ as $(i_1,j_1),\dots,(i_r,j_r)$ such that $j_1 \leq \dots \leq j_r$.  For each index $t=1,\dots,r$, let $k_t$ be the column with $(i_t,k_t) \in \supp(\lambda)$.  We claim that the lemma holds with 
\[\label{product-elem*} h_i = \(1 + \sum_{t=1}^r  (\eta_i)_{i_tj_t} (\lambda_{i_tk_t})^{-1} e_{j_tk_t}\)^{-1} = \prod_{t=1}^{r} \(1 + (\eta_i)_{i_tj_t} (\lambda_{i_tk_t})^{-1} e_{j_tk_t}\)^{-1},\] where the factors in the product are multiplied in order from left to right.   
  Proving this is straightforward and follows almost exactly the same argument as the proof of Lemma \ref{technical-lemma}, with only minor adjustments reflecting the fact that our action is now on $\fkn_\cP^*$ rather than $\fkn_\cP$.  
\end{proof}

\begin{proof}[Proof of Theorem \ref{co-normal-reps}]
Given the preceding lemma, the proof of the theorem is essentially just a repetition of the proof of Theorem \ref{normal-reps}, where we change all instances of $\fkn_\cP$ to $\fkn_\cP^*$, and we update all references to the corresponding lemmas in this section.
\end{proof}

\begin{example} Again suppose $\cP\vartriangleleft[[7]]$ and $\lambda \in \sP_\cP$ are as in Example \ref{supercharacter-example}.  If $\eta = r e_{13}^* + s e_{26}^* + t e_{37}^* \in \fkn_\cP^*$, then 
\[ \fkr_\lambda^*(\eta) = (te_{37}^*)*_\lambda (r e_{13}^* + s e_{26}^*) = \eta + c^{-1}st e_{35}^*\] and $\chi_\cP^{(\lambda,\eta)} = \chi_\cP^\nu$ where $\nu \in \fkn_\cP^*$ is the $\FF_q$-linear functional defined by the matrix 
 \[ \nu = 
 \(\begin{array}{ccccccc} 
0 & 0 & r & a & 0 & 0 & 0 \\
 & 0 & 0 & 0 & 0 & s & c \\
 &  & 0 & 0 & d & stc^{-1} & t \\
 &  &  & 0 & 0 & b & 0 \\
 &  &  &  & 0 & 0 & 0 \\
 &  &  &  &  & 0 & 0 \\
 &  &  &  &  &  & 0 \end{array}\) \in \fkn_\cP^*.\]
\end{example}

Since a character $\chi$ is irreducible if and only if its inner product with itself is one, we have the following corollary to Theorem \ref{co-normal-reps}.

\begin{corollary}\label{irred-criterion}
Let $\cP \vartriangleleft [[n]]$, and suppose $\lambda \in \sP_\cP^*$ and $\eta \in \fkn_\cP^*$ has $\supp(\eta) \subset \coaux_\cP(\lambda)$.  Then the supercharacter $\chi_\cP^{(\lambda,\eta)}$ is irreducible if and only if   $(i,k),(j,l) \in \supp(\lambda)$ implies $\{ (i,j) ,(j,k),(k,l)\} \not\subset\cP$ whenever  $i <j<k<l$.
\end{corollary}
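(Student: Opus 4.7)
The plan is to extract this essentially as a formal consequence of Theorem \ref{co-normal-reps}(b), combined with unpacking the definitions of $\coadj_\cP^{\mathrm L}$ and $\coadj_\cP^{\mathrm R}$.

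First, I would invoke the standard fact that a character $\chi$ of a finite group is irreducible if and only if $\langle \chi, \chi \rangle = 1$. Applying this to $\chi = \chi_\cP^{(\lambda,\eta)}$ and using the formula
\[ \left\langle \chi_\cP^{(\lambda,\eta)}, \chi_\cP^{(\lambda,\eta)} \right\rangle_{U_\cP} = q^{|\coadj_\cP^{\mathrm L}(\lambda) \cap \coadj_\cP^{\mathrm R}(\lambda)|} \]
from Theorem \ref{co-normal-reps}(b), irreducibility is equivalent to $\coadj_\cP^{\mathrm L}(\lambda) \cap \coadj_\cP^{\mathrm R}(\lambda) = \varnothing$. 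Observe that the auxiliary data $\eta$ plays no role here, which is consistent with the claim.

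Next I would translate the emptiness of the intersection into the combinatorial criterion stated in the corollary. Suppose $(j,k) \in \coadj_\cP^{\mathrm L}(\lambda) \cap \coadj_\cP^{\mathrm R}(\lambda)$. Unpacking the definitions in (\ref{coadj}), membership in $\coadj_\cP^{\mathrm L}(\lambda)$ produces some $i < j$ with $(i,k) \in \supp(\lambda)$ and $(i,j) \in \cP$, while membership in $\coadj_\cP^{\mathrm R}(\lambda)$ produces some $l > k$ with $(j,l) \in \supp(\lambda)$ and $(k,l) \in \cP$. Since $(j,k) \in \cP$ by definition of $\coadj_\cP$, we obtain $i<j<k<l$ with $(i,k),(j,l) \in \supp(\lambda)$ and $\{(i,j),(j,k),(k,l)\} \subset \cP$. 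Conversely, given any such quadruple, the position $(j,k)$ lies in $\coadj_\cP^{\mathrm L}(\lambda) \cap \coadj_\cP^{\mathrm R}(\lambda)$, witnessed by $(i,k)$ and $(j,l)$. Thus the intersection is nonempty precisely when the forbidden configuration exists.

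Combining the two equivalences yields the corollary. There is no real obstacle here: the content is packaged entirely inside Theorem \ref{co-normal-reps}(b), and what remains is a short bookkeeping translation between the set-theoretic condition on $\coadj_\cP^{\mathrm L} \cap \coadj_\cP^{\mathrm R}$ and the poset-theoretic condition on pairs of support elements of $\lambda$.
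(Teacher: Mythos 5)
Your proposal is correct and matches the paper's (implicit) argument exactly: the paper dispatches this corollary with the single sentence that a character is irreducible iff its self-inner-product is $1$, leaving the reader to combine Theorem \ref{co-normal-reps}(b) with the definitions in (\ref{coadj}), which is precisely the bookkeeping you carried out. Your unpacking of $\coadj_\cP^{\mathrm L}(\lambda) \cap \coadj_\cP^{\mathrm R}(\lambda) \neq \varnothing$ into the existence of the forbidden quadruple $i<j<k<l$ is accurate, including the observation that $(j,k) \in \cP$ is forced by the definition and that $\eta$ plays no role.
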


When $\cP =[[n]]$ and $\lambda \in \sP_n^*$, this result becomes the combinatorial condition that $\chi_{[[n]]}^{\lambda}$ is irreducible if and only if  the set partition $\lambda$ is \emph{non-crossing}.  A set partition of $[[n]]$ is non-crossing if when the numbers $1,2,\dots, n$ are arranged consecutively in a circle, none of the chords connecting $i$ and $j$ for $(i,j) \in \supp(\lambda)$ intersect inside the circle.

As another corollary, we can describe the restriction of supercharacters from $ U_n$ to the normal subgroup $U_\cP$ explicitly.

\begin{corollary} Fix a poset $\cP\vartriangleleft [[n]]$.   Choose $\lambda \in \sP_n^*$ and let $\mu \in \sP_{\cP}^*$ be the restriction of $\lambda$ to $\fkn_\cP$.  Then the restriction of $\chi^\lambda$ to $U_\cP$ decomposes as
\[ \Res_{U_\cP}^{U_n} \(\chi^\lambda \)  =  q^{ c}\sum_\eta \chi_{\cP}^{(\mu,\eta)},\qquad\text{where $c = |\coadj_{[[n]]}^{\mathrm L}(\lambda)| - |\coadj_\cP^{\mathrm L}(\mu)|-|\coaux_\cP(\mu)|$}\] and where the sum is over all $\eta \in \fkn_\cP^*$ with  $\supp(\eta) \subset \coaux_\cP(\mu)$.
\end{corollary}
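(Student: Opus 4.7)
My plan is to exploit the natural restriction map $r : \fkn_n^* \to \fkn_\cP^*$. The key observation is that for any $g \in U_\cP$ we have $g-1 \in \fkn_\cP$, so the value $\nu(g-1)$ for $\nu \in \fkn_n^*$ depends only on $r(\nu)$. Under this map $r(\lambda) = \mu$, and since $r$ is $U_n$-equivariant with respect to both left and right actions (which are well-defined on $\fkn_\cP$ and $\fkn_\cP^*$ by Corollary \ref{supernormal}), $r$ restricts to a surjection $U_n \lambda U_n \twoheadrightarrow U_n \mu U_n$ whose fibers all have size $|U_n \lambda U_n|/|U_n \mu U_n|$. Substituting into the supercharacter formula (\ref{formula}) and grouping terms by their image under $r$ will give
\[ \chi^\lambda(g) = \frac{|U_n \lambda|}{|U_n \mu U_n|} \sum_{\mu' \in U_n \mu U_n} \theta\bigl(\mu'(g-1)\bigr) \qquad\text{for }g \in U_\cP. \]

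Next I will decompose $U_n \mu U_n$ as a disjoint union of two-sided $U_\cP$-orbits. By Theorem \ref{co-normal-reps}, these orbits are parametrized exactly by the $\eta \in \fkn_\cP^*$ with $\supp(\eta) \subset \coaux_\cP(\mu)$, via $\eta \mapsto U_\cP \nu_\eta U_\cP$ where $\nu_\eta = \mu + \fkr_\mu^*(\eta)$. Since $U_\cP \vartriangleleft U_n$, writing $\nu_\eta = g\mu h$ for some $g,h \in U_n$ gives $U_\cP \nu_\eta = g U_\cP \mu h$ and $U_\cP \nu_\eta U_\cP = g (U_\cP \mu U_\cP) h$, so the orbit sizes $|U_\cP \nu_\eta|$ and $|U_\cP \nu_\eta U_\cP|$ are independent of $\eta$ and equal $|U_\cP \mu|$ and $|U_\cP \mu U_\cP|$ respectively. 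Applying the formula (\ref{formula}) once for each $\eta$ and summing then yields
\[ \sum_\eta \chi_\cP^{(\mu,\eta)}(g) = \frac{|U_\cP \mu|}{|U_\cP \mu U_\cP|} \sum_{\mu' \in U_n \mu U_n} \theta\bigl(\mu'(g-1)\bigr), \]
since the orbits $U_\cP \nu_\eta U_\cP$ reassemble to the full $U_n$-orbit.

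Comparing the two displayed identities will give $\Res_{U_\cP}^{U_n} \chi^\lambda = c' \sum_\eta \chi_\cP^{(\mu,\eta)}$ with $c' = \tfrac{|U_n \lambda| \cdot |U_\cP \mu U_\cP|}{|U_n \mu U_n| \cdot |U_\cP \mu|}$. Rewriting each of these four orbit sizes as a power of $q$ using Lemma \ref{one-sided-coorbits}---namely $q^{|\coadj_{[[n]]}^{\mathrm L}(\lambda)|}$, $q^{|\coadj_\cP(\mu)|}$, $q^{|\coadj_{[[n]]}(\mu)|}$, and $q^{|\coadj_\cP^{\mathrm L}(\mu)|}$ respectively---and then invoking the identity $|\coadj_{[[n]]}(\mu)| - |\coadj_\cP(\mu)| = |\coaux_\cP(\mu)|$, which is immediate from (\ref{coaux}), will collapse the exponent to the advertised $c$.

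The main obstacle I anticipate is the careful verification that one-sided and two-sided $U_\cP$-orbit sizes are constant as $\nu_\eta$ ranges over a fixed $U_n$-orbit; this ultimately rests on carefully pushing the normality of $U_\cP$ in $U_n$ through the action on $\fkn_\cP^*$. Once that is in place the remainder of the proof is essentially a bookkeeping exercise: interchanging orders of summation and invoking the orbit-size formulas already established in Sections \ref{superclass-section} and \ref{supercharacter-section}. A minor preliminary check worth recording is that $\mu$ is genuinely an element of $\sP_\cP^*$ rather than merely $\fkn_\cP^*$, which follows because $\lambda \in \sP_n^*$ has at most one nonzero entry in each row and column, a property preserved under the coordinate projection defining $r$.
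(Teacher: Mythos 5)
Your argument is correct, and it is a genuinely different route from the paper's. The paper disposes of this corollary in a single line by citing Theorem 5.1 of \cite{M09}, which is a general restriction formula for supercharacters of algebra groups established there; you instead give a short, self-contained derivation from the supercharacter formula (\ref{formula}) together with the orbit-counting results already proved in Section \ref{supercharacter-section}. Your key observations are all sound: $r$ is $U_n$-equivariant (because $h^{-1}Xg^{-1} \in \fkn_\cP$ for $X\in\fkn_\cP$ by Corollary \ref{supernormal}), so it maps $U_n\lambda U_n$ onto $U_n\mu U_n$ with constant fiber size; the decomposition of $U_n\mu U_n$ into two-sided $U_\cP$-orbits indexed by $\eta$ with $\supp(\eta)\subset\coaux_\cP(\mu)$ is exactly what the proof of Theorem \ref{co-normal-reps} establishes; and normality of $U_\cP$ lets you conjugate to see that $|U_\cP\nu_\eta|$ and $|U_\cP\nu_\eta U_\cP|$ are independent of $\eta$. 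Plugging in $|U_n\lambda| = q^{|\coadj_{[[n]]}^{\mathrm L}(\lambda)|}$, $|U_n\mu U_n| = q^{|\coadj_{[[n]]}(\mu)|}$, $|U_\cP\mu| = q^{|\coadj_\cP^{\mathrm L}(\mu)|}$, $|U_\cP\mu U_\cP| = q^{|\coadj_\cP(\mu)|}$ and using $|\coaux_\cP(\mu)| = |\coadj_{[[n]]}(\mu)|-|\coadj_\cP(\mu)|$ yields exactly the advertised exponent $c$. The trade-off is that your proof keeps the paper self-contained at the cost of a bit of bookkeeping, while the paper's citation emphasizes that the corollary is a special case of a broader normality/restriction principle worked out elsewhere.
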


\begin{proof}
This result follows by applying  Theorem 5.1 in \cite{M09}
\end{proof}

As remarked at the end of Section \ref{sect2}, each supercharacter of $U_n$ is given by a product of irreducible supercharacters.  This is not true for all algebra groups under the supercharacter theory defined in \cite{DI06}; as a counterexample, the abelian algebra group $G = \{ g \in U_3 : g_{12} = g_{23}\}$ has supercharacters with degree two.
However, using the theorem we can show that it does hold when $U_\cP \vartriangleleft U_n$.  In particular, let $\cP \vartriangleleft [[n]]$ and call a supercharacter $\chi_\cP^{(\lambda,\eta)}$   \emph{elementary}  if $|\supp(\lambda)| = 1$.  Such supercharacters are characterized explicitly as those of the form $\chi^\mu_\cP$ where $\mu \in \fkn_\cP^*$ and for some $(i,l) \in \cP$, we have 
\begin{enumerate}
\item[(1)] $\mu_{il} \neq 0$;
\item[(2)] $\mu_{jk} \neq 0$ only if $i\leq j \leq \max \{ i' : (i,i') \in [[n]] \setminus \cP \}$ and 
$l \geq k \geq \min\{ l' : (l',l) \in  [[n]] \setminus \cP \}$;
\item[(3)] $\mu_{jk} = \mu_{ik} \mu_{jl} /  \mu_{il}$ if $\mu_{jk} \neq 0$.
\end{enumerate}
By Corollary \ref{irred-criterion}, each elementary supercharacter of $U_\cP$ is irreducible.   Observe  also by Lemma \ref{one-sided-coorbits} that  if $\chi^\mu_\cP$ is elementary, then $\chi_\cP^{g\mu h}$ is elementary for all $g,h \in U_n$.  
We now have the following result.

\begin{proposition}
If $\cP \vartriangleleft [[n]]$ then each supercharacter of $U_\cP$ is a product of (irreducible) elementary supercharacters.
\end{proposition}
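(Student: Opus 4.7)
The plan is to proceed by induction on $|\supp(\lambda)|$; the base case $|\supp(\lambda)| \leq 1$ is immediate from the definition of elementary, and Corollary \ref{irred-criterion} ensures that all elementary supercharacters are irreducible.

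For the inductive step, select an arc $(i_0,l_0) \in \supp(\lambda)$ (for concreteness, one with minimal left endpoint $i_0$) and write $\lambda = \lambda' + \lambda''$ with $\lambda'' = \lambda_{i_0 l_0} e_{i_0 l_0}^*$. Because the arcs of the set partition $\lambda$ occupy distinct rows and distinct columns, the set $\coaux_\cP(\lambda)$ decomposes as a disjoint union $\coaux_\cP(\lambda') \sqcup \coaux_\cP(\lambda'')$, and $\eta$ splits uniquely as $\eta = \eta' + \eta''$ with $\supp(\eta') \subset \coaux_\cP(\lambda')$ and $\supp(\eta'') \subset \coaux_\cP(\lambda'')$. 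The pair $(\lambda'',\eta'')$ then indexes an elementary (and irreducible) supercharacter, and the goal reduces to establishing the product identity
\[
\chi_\cP^{(\lambda,\eta)} \;=\; \chi_\cP^{(\lambda',\eta')} \cdot \chi_\cP^{(\lambda'',\eta'')},
\]
after which induction applied to $(\lambda',\eta')$ completes the factorization.

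The first step toward this identity is to verify that $\lambda + \fkr_\lambda^*(\eta) = (\lambda' + \fkr_{\lambda'}^*(\eta')) + (\lambda'' + \fkr_{\lambda''}^*(\eta''))$ as elements of $\fkn_\cP^*$. Using bilinearity of $*_\lambda$ to expand $\eta_L *_\lambda \eta_R$, this reduces to a short calculation: the cross terms vanish because $\supp(\eta'_L)$ meets no column of $\lambda''$ while $\supp(\eta''_R)$ is contained in row $i_0$, and the $*_\lambda$ product couples entries only across arcs of $\lambda$. Denoting the three resulting functionals by $\nu, \nu', \nu''$, the product identity becomes $\chi_\cP^\nu = \chi_\cP^{\nu'} \cdot \chi_\cP^{\nu''}$. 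I would establish this by exhibiting an explicit $U_\cP$-equivariant isomorphism $V_\cP^{\nu'} \otimes V_\cP^{\nu''} \to V_\cP^{\nu}$ sending $v_{\mu'} \otimes v_{\mu''} \mapsto v_{\mu' + \mu''}$; equivariance is automatic from the formula $g v_\mu = \theta \circ \mu(1-g^{-1}) v_{g\mu}$, which is additive in $\mu$.

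The main obstacle is showing that $(\mu',\mu'') \mapsto \mu' + \mu''$ actually defines a bijection $U_\cP \nu' \times U_\cP \nu'' \to U_\cP \nu$. Extending the argument of Lemma \ref{one-sided-coorbits}(a) to cover the functionals $\nu,\nu',\nu''$ (which need not themselves lie in $\sP_\cP^*$), each relevant one-sided orbit is an affine subspace whose direction is described by a $\coadj_\cP^L$-type set built from the support of the functional. The bijection then reduces to the disjoint-union statement $\coadj_\cP^L(\nu) = \coadj_\cP^L(\nu') \sqcup \coadj_\cP^L(\nu'')$: this disjointness is the technical heart of the argument and requires carefully enumerating the columns appearing in $\supp(\nu')$ versus $\supp(\nu'')$, making essential use of the minimality of $i_0$ together with the normality of $\cP$ (which via Lemma \ref{normal-in-U_n} constrains how $\coaux^L$ and $\coaux^R$ positions can propagate under $*_\lambda$). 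Once established, the module isomorphism gives the character identity, and induction completes the proof.
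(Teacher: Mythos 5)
Your proposal diverges from the paper's proof and contains a genuine gap in its first step. The claimed decomposition $\coaux_\cP(\lambda) = \coaux_\cP(\lambda') \sqcup \coaux_\cP(\lambda'')$ is false in general; the coadjacency sets interact across the two halves of $\lambda$ in a way that distinctness of rows and columns does not control. Concretely, take $n=4$, $\cP = \{(1,3),(1,4),(2,3),(2,4)\} = [[4]]\setminus\{(1,2),(3,4)\} \vartriangleleft [[4]]$, and $\lambda = e_{13}^* + e_{24}^* \in \sP_\cP^*$. With your choice $\lambda'' = e_{13}^*$ (minimal left endpoint) and $\lambda' = e_{24}^*$, one computes
\[
\coaux_\cP(\lambda') = \coaux_\cP(\lambda'') = \{(2,3)\} = \coaux_\cP(\lambda),
\]
so the union is not disjoint, and for $\eta = t\,e_{23}^*$ with $t \neq 0$ there is no unique splitting $\eta = \eta' + \eta''$. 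A second, independent failure mode occurs even when the union is disjoint: for $\cP = [[4]]\setminus\{(1,2)\}$ and the same $\lambda$, one finds $\coaux_\cP(\lambda) = \varnothing$ while $\coaux_\cP(\lambda') \sqcup \coaux_\cP(\lambda'') = \{(2,3)\}$, so strict inequality also occurs. The issue is that $\coaux_\cP$ is defined as $\coadj_{[[n]]} - \coadj_\cP$, with both $\coadj^{\mathrm L}$ and $\coadj^{\mathrm R}$ contributions, and a position contributed by one arc of $\lambda$ on the $\mathrm L$ side can be cancelled by another arc on the $\mathrm R$ side; this cancellation is lost when you split $\lambda$.

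More broadly, the approach is structurally different from the paper's, and the difference matters. The paper sidesteps the auxiliary data $\eta$ and the representatives $\lambda + \fkr_\lambda^*(\eta)$ entirely: it first observes (by the classification) that every supercharacter of $U_\cP$ has the form $\chi_\cP^{g\lambda h}$ for some $g,h \in U_n$ and $\lambda \in \sP_\cP^*$, then factors $\chi_\cP^\lambda$ for the pure set partition $\lambda = \alpha_1 + \cdots + \alpha_k$ (where the bijection $U_\cP\alpha_1 \times \cdots \times U_\cP\alpha_k \to U_\cP\lambda$ is immediate because the $\coadj_\cP^{\mathrm L}(\alpha_i)$ lie in disjoint columns), and finally transports this factorization by the two-sided $U_n$-action on $\fkn_\cP^*$, which preserves the supercharacter formula by normality. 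Your proposal instead attempts to factor the explicit representative $\lambda + \fkr_\lambda^*(\eta)$, which is not itself a labeled set partition, forcing you into delicate and (as shown) unsound combinatorics of the $\coaux$ and $\coadj$ sets, a nontrivial cross-term vanishing argument for $*_\lambda$, and an extension of Lemma \ref{one-sided-coorbits} to non-set-partition functionals, none of which is needed if you adopt the paper's reduction. If you want to salvage your route, you would at minimum need to replace the false $\coaux$ decomposition with a correct statement and to carry the orbit analysis through for the genuinely non-$\sP_\cP^*$ representative $\nu = \lambda + \fkr_\lambda^*(\eta)$; but it is simpler and cleaner to factor $\chi_\cP^\lambda$ first and then transport.
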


\begin{proof}
Choose an arbitrary supercharacter of $U_\cP$; by our classification theorem, this character is given by $\chi_\cP^\mu$ for a linear functional $\mu \in \fkn_\cP^*$ with $\mu = g\lambda h$ for some $g,h \in U_n$ and $\lambda \in \sP_\cP^*$.  Write $\lambda = \alpha_1 + \dots +\alpha_k$ for some $\alpha_i \in \fkn_\cP^*$ with $|\supp(\alpha_i)|=1$; then the positions in $\supp(\alpha_i)$ all lie in distinct row and columns.   We claim that 
$\chi_\cP^\lambda = \chi_\cP^{\alpha_1} \cdots \chi_\cP^{\alpha_k}.$  This follows most readily from module considerations.  One easily checks that the map 
\[ \barr{ccc} U_\cP \alpha_1 \times \cdots \times U_\cP \alpha_k & \to & U_\cP \lambda \\
 (\nu_1,\dots,\nu_k) & \mapsto & \nu_1 + \dots + \nu_k \earr\] is a bijection, and it follows immediately that the linear map defined on basis elements by 
 \[ \barr{ccc} V^{\alpha_1}_\cP \otimes \cdots \otimes V^{\alpha_k}_\cP & \to & V^\lambda_\cP \\
 v_{\nu_1} \otimes \cdots \otimes v_{\nu_k} & \mapsto & v_{\nu_1 + \dots + \nu_k} \earr\]  is a $U_\cP$-module isomorphism, which establishes our factorization of $\chi_\cP^\lambda$ as a product of the elementary supercharacters $\chi_\cP^{\alpha_i}$.  It then follows from the formula (\ref{formula}) that $\chi_\cP^\mu = \chi_\cP^{g\lambda h} = \chi_\cP^{g \alpha_1 h} \cdots \chi_\cP^{g \alpha_k h}$ and so $\chi_\cP^\mu$ is also a product of elementary supercharacters.
\end{proof}

\section{$\FF_q$-labeled Posets and the Supercharacters of $U_\cP\vartriangleleft U_n$}\label{posets-section}

As promised in the introduction, we now carry the classification of the supercharacters of $U_\cP$ a step further, by providing a combinatorial interpretation of Theorem \ref{co-normal-reps}.  To this end, we first    show how one can naturally represent the supercharacters $\chi_\cP^{(\lambda,\eta)}$ as posets labeled by elements of $\FF_q$.  Then, going in the opposite direction, we classify the set of such representative posets by a  graph theoretic condition involving only $\cP$.

To begin this program, we must first make a few definitions.    Recall that if $\cP$ is a poset, then $\cP^\cov$ is the set of its covers; i.e., the elements $(i,k) \in \cP$ such that there is no $j$ with $(i,j),(j,k) \in \cP$.  
Analogous to an $\FF_q$-labeled set partition, we define an \emph{$\FF_q$-labeled poset} to be a poset $\cP$ with a map $\cP^\cov \rightarrow \FF_q^\times$ which labels each cover with a nonzero element of $\FF_q$.      We can think of this labeling as assigning an element of $\FF_q^\times$ to each edge in the Hasse diagram of $\cP$.  For each $(i,j) \in \cP^\cov$, we let $\cP_{ij}$ denote the corresponding label, and for each $(i,j) \notin \cP^\cov$ we set $\cP_{ij} = 0$.  With minor abuse of notation, we refer to a labeled poset by just its poset structure $\cP$.  In particular, an $\FF_q$-labeled subposet of a poset $\cP$ is just a labeled poset whose poset structure is a subset of $\cP$.

The utility of these definitions comes from the fact that if $\cP \vartriangleleft [[n]]$, then we can naturally identify each supercharacter of $U_\cP$ with an $\FF_q$-labeled poset.  To make this connection explicit, we begin with the following observation.

\begin{proposition}\label{covers-prop}
Fix a poset $\cP\vartriangleleft [[n]]$.   If $\lambda \in \sP_\cP^*$ and $\eta  \in \fkn_\cP^*$ has $\supp(\eta) \subset \coaux_\cP(\lambda)$, then $\supp(\lambda + \eta)$ is the set of covers of a poset $\cP^{(\lambda,\eta)}$ on $[n]$.  
\end{proposition}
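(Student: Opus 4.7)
The plan is to define $\cP^{(\lambda,\eta)}$ as the transitive closure of $\supp(\lambda+\eta)$ regarded as a binary relation on $[n]$, and then to verify that $\supp(\lambda+\eta)$ coincides with its cover set. First observe that $\supp(\lambda+\eta) = \supp(\lambda) \sqcup \supp(\eta)$ is a disjoint union, since the sets in (\ref{coaux}) are disjoint from $\supp(\lambda)$ (as noted immediately after (\ref{coaux})). By the characterization (\ref{covers2posets}) of a poset by its covers, the inclusion $(\cP^{(\lambda,\eta)})^\cov \subset \supp(\lambda+\eta)$ is automatic, so the substance of the proposition is the reverse: for each $(i,k) \in \supp(\lambda+\eta)$ there must be no chain $(i, j_1), (j_1, j_2), \ldots, (j_{r-1}, k) \in \supp(\lambda+\eta)$ with $r \geq 2$.

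I would argue this by contradiction, splitting on whether $(i,k) \in \supp(\lambda)$ or $(i,k) \in \supp(\eta)$. In the first case, $\lambda \in \sP_\cP^*$ has the unique row-$i$ entry $(i, k)$, so $(i, j_1)$ must lie in $\supp(\eta) \subset \coaux_\cP(\lambda)$; the ensuing condition $(i, j_1) \notin \coadj^{\mathrm R}_\cP(\lambda)$ then forces $(j_1, k) \notin \cP$. Yet transitivity along $(j_1, j_2), \ldots, (j_{r-1}, k) \subset \cP$ places $(j_1, k) \in \cP$, a contradiction. In the second case, $(i,k) \in \coaux_\cP(\lambda) \subset \coadj^{\mathrm L}_{[[n]]}(\lambda) \cup \coadj^{\mathrm R}_{[[n]]}(\lambda)$. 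If $(i,k) \in \coadj^{\mathrm L}_{[[n]]}(\lambda)$, pick the unique $(i',k) \in \supp(\lambda)$ with $i' < i$; the $\coaux$ condition gives $(i', i) \notin \cP$. Column-uniqueness of $\supp(\lambda)$ together with $j_{r-1} > i > i'$ rules out $(j_{r-1}, k) \in \supp(\lambda)$, so $(j_{r-1}, k) \in \supp(\eta)$; applying the $\coaux$ condition to this entry (relative to the same $(i',k)$) forces $(i', j_{r-1}) \notin \cP$. But transitivity along $(i, j_1), \ldots, (j_{r-2}, j_{r-1}) \subset \cP$ gives $(i, j_{r-1}) \in \cP$, and normality $\cP \vartriangleleft [[n]]$ then extends this to $(i', j_{r-1}) \in \cP$ via $i' \leq i$, the required contradiction. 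The case $(i,k) \in \coadj^{\mathrm R}_{[[n]]}(\lambda)$ is handled symmetrically, using instead the first edge $(i, j_1)$ and pushing the right endpoint outward via normality.

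The hardest part will be keeping the case analysis honest: in each sub-case one has to identify which of the two extremal edges of a hypothetical chain lies in $\supp(\eta)$, and then pair its $\coaux$-forbidden $\cP$-relation with normality of $\cP$ in precisely the right direction to contradict what transitivity along the interior of the chain produces.
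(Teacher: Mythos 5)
Your proof is correct, and it takes a genuinely different route from the paper's. The paper opens its argument by setting $S = \supp(\lambda)\cup\coaux_\cP(\lambda)$ and asserting that ``it suffices to show that if $(i,j),(j,k)\in S$ then $(i,k)\notin S$.'' As a statement about an arbitrary set $S\subset [[n]]$ that reduction is actually too weak: the set $S=\{(1,2),(2,3),(3,4),(1,4)\}$ satisfies the length-two condition, yet $(1,4)$ is not a cover of the poset it generates. You correctly identify the condition that must really be checked --- no element of $S$ may be reachable from its own endpoints by a chain in $S$ of length $\geq 2$ --- and then prove it directly rather than via the length-two case. The mechanism you use is also different from the paper's four-case analysis: you apply the $\coaux$-membership of one extremal edge of the hypothetical chain to produce a forbidden $\cP$-relation (via $(i,j_1)\notin\coadj_\cP^{\mathrm R}(\lambda)$ in case 1, or via $(j_{r-1},k)\notin\coadj_\cP^{\mathrm L}(\lambda)$ resp.\ $(i,j_1)\notin\coadj_\cP^{\mathrm R}(\lambda)$ in cases 2a/2b), and then contradict it by running transitivity along the rest of the chain (which lies in $\cP$ since $\supp(\lambda+\eta)\subset\cP$) and applying normality of $\cP$ to shift an endpoint. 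This ``transitivity along the chain, then normality'' step is the real substance and does not appear in the paper's proof; the paper's four cases only establish the $r=2$ instance of your claim, and your argument supplies the general-$r$ version that the statement actually requires.
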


\begin{remark}
The superclass analogue of this proposition fails; for example, with $\cP$ as in Example \ref{superclass-example} let $\lambda = 1\larc{a}7 | 2\larc{b}4 \larc{c} 6 | 3| 5 \in \sP_\cP$ and $X = se_{14} + t e_{47} \in \fkn_\cP$.  Then $\supp(X) \subset \aux_\cP(\lambda)$ but $(1,4),(4,7), (1,7) \in \supp(\lambda +X)$, so $\supp(\lambda+X)$ cannot be the set of covers of a poset.
Thus, in somewhat typical asymmetry, the supercharacters of $U_\cP\vartriangleleft  U_n$ appear to lend themselves more naturally to a classification in terms of nice combinatorial objects than do the superclasses.
\end{remark}

\begin{proof}
Fix a set partition $\lambda \in \sP_\cP^*$ and let $S = \supp(\lambda) \cup \coaux_\cP(\lambda)$.  To prove the lemma, it suffices to show that if $(i,j), (j,k) \in S$ then $(i,k) \notin S$, since  if this holds, then $S$ is the set of covers of the poset defined by the condition (\ref{covers2posets}).  We do this by considering four cases.  First, suppose $(i,j), (j,k) \in \supp(\lambda)$.  Since $\lambda$ has at most one nonzero entry in each row and column,  clearly $(i,k) \notin \supp(\lambda)$.  In addition,
by definition $(i,k) \notin \coadj_{[[n]]}(\lambda)$ so $(i,k) \notin \coaux_\cP(\lambda)$.  

Next, suppose $(i,j) \in \supp(\lambda)$ and $(j,k) \in \coaux_\cP(\lambda)$.  Then again $(i,k) \notin\supp(\lambda)$, so, arguing by contradiction, assume $(i,k) \in \coaux_\cP(\lambda)$.   Then by definition $(i,k)$ is below or to the left of some position in $\supp(\lambda)$; the latter case cannot occur since $(i,j) \in \supp(\lambda)$, so there must be some $i' < i$ such that $(i',k) \in \supp(\lambda)$ and $(i',j) \notin \cP$.  But by Lemma \ref{normal-in-U_n} this implies that $(i,j) \notin \cP$, a contradiction.  Hence $(i,k) \notin \coaux_\cP(\lambda)$.  We can handle the symmetric case that $(i,j) \in \coaux_\cP(\lambda)$ and $(j,k) \in \supp(\lambda)$ with a similar argument.

Finally, suppose $(i,j),(j,k) \in \coaux_\cP(\lambda)$.  Then $(i,k) \notin \supp(\lambda)$, since if $(i,k) \in \supp(\lambda)$ and $(i,j ) \in \coaux_\cP(\lambda)$ then we must have $(j,k)\notin \cP$.  Alternatively, if $(i,k) \in \coaux_\cP(\lambda)$ then by definition either there exists $i'<i$ with $(i',k) \in \supp(\lambda)$ and $(i',j) \notin \cP$, or there exists $k'>k$ with $(i,k') \in \supp(\lambda)$ and $(j,k')\notin\cP$.  By Lemma \ref{normal-in-U_n}, however, the first case implies $(i,j) \notin \cP$ and the second case implies $(j,k)\notin \cP$.  Both consequences are contradictions, so necessarily $(i,k)\notin \coaux_\cP(\lambda)$.  
\end{proof}

Extending our notation, we attach to the poset $\cP^{(\lambda,\eta)}$ in Proposition \ref{covers-prop} the obvious $\FF_q$-labeling given by setting
\be \label{labeling}
 \(\cP^{(\lambda,\eta)}\)_{ij} = (\lambda + \eta)_{ij},\qquad\text{for all }i,j.
\ee  When $\cP \vartriangleleft [[n]]$, this gives us a well-defined map
\be\label{partitions2posets} \barr{ccc}  
\Bigl\{
(\lambda, \eta) :  \text{$\lambda \in \sP_\cP^*$ and $\eta \in \fkn_\cP^*$, $\supp(\eta) \subset \coaux_\cP(\lambda)$} 
\Bigr\} & \to & 
\Bigl\{ 
\text{$\FF_q$-labeled posets on $[n]$}
\Bigr\} \\ 
 (\lambda, \eta) & \mapsto & \cP^{(\lambda,\eta)}
\earr\ee
which we will show later to be injective.  

Practically speaking, these observations just mean that we can concisely visualize our supercharacter indexing set by drawing the Hasse diagrams of the corresponding labeled posets.  As we shall soon see, we lose no information by thinking of things in this way.

\begin{example} The Hasse diagram of $\cP^{(\lambda,\eta)}$ has a decomposition into paths (i.e.,  directed graphs whose vertices and edges can be listed as $v_1,\dots,v_m$ and $(v_1,v_2),\dots,(v_{m-1},v_m)$) if and only if $\eta = 0$.  
 In this case, the connected components of the Hasse diagram of $\cP^{(\lambda,0)}$ are just the labeled parts of $\lambda$. 
\end{example}

\begin{example}
Let $\cP \vartriangleleft [[7]]$ be as in Example \ref{superclass-example} and suppose $\lambda =1\larc{a}4\larc{b}6|2\larc{c}7|3\larc{d}5 \in \sP_\cP^*$ and $\eta = r e_{13}^* + s e_{26}^* + t e_{37}^* \in \fkn_\cP^*$ as in Example \ref{supercharacter-example}, where $a,b,c,d \in \FF_q^\times$ and $r,s,t \in \FF_q$.  Then $\cP^{(\lambda,\eta)}$ is the $\FF_q$-labeled poset 
\[\cP^{(\lambda,\eta)}\ =\ \ \xy<0.45cm,1.3cm> \xymatrix@R=.3cm@C=.3cm{
5& &  7   && 6 \\
  & &     & *={ } &   \\
3\ar @{-} [uu]^d  \ar @{-} [uurr]^t &   & 2\ar @{-} [uu]^c \ar @{-} [uurr]^s   && 4 \ar @{-} [uu]_b \\
  &&  &   &  && *={ } \\
  &&   1\ar @{-} [uurr]_a \ar @{-} [lluu]^r &  & *={ } \\
}\endxy\]
where we remove the edges labeled by $r,s,t$, respectively, if these elements are zero.
\end{example}

The problem at the heart of what follows is to determine the image of the map (\ref{partitions2posets}), and then to show that restricted to this set, (\ref{partitions2posets}) has an inverse.  To do this we require some additional definitions.  Fix an arbitrary poset $\cP$ on $[n]$.  We call a subset $S \subset \cP^\cov$ \emph{independent} if no two elements in $S$ share the same first coordinate or same second coordinate; that is, if whenever $(i,j),(k,l) \in S$ are distinct, we have $i\neq k$ and $j\neq l$.  

Among all independent subsets of $\cP^\cov$, there is one of particular interest which we may define as follows.  Given a subset $S\subset \cP^\cov$, let $\ell_S$ denote the integer partition whose parts are the positive numbers $j-i$ for $(i,j) \in S$.  In greater detail, list the elements of $S$ as  $ (i_1,j_1),\dots,(i_s,j_s)$  so that $j_1-i_1 \geq \dots \geq j_s-i_s$ and define $\ell_S$ to be the weakly decreasing sequence of nonnegative integers  
\[ \ell_S = \( j_1-i_1,\ \dots,\  j_s-i_s,\ 0,\ 0,\ 0,\ \dots\).\] 
In particular, if $S = \varnothing$ then $\ell_S  = (0,0,0,\dots)$.  Given any two integer sequences $\ell,\ell'$, we say that  $\ell > \ell'$  if $\ell \neq \ell'$ and the first nonzero coordinate of $\ell-\ell'$ is positive; as usual, we say that $\ell \geq \ell'$ if $\ell = \ell'$ or $\ell > \ell'$.  Under this definition, $\geq$ is the lexicographic total ordering of the set of all integer sequences.

We now define a \emph{highest cover set}  of $\cP$ to be any independent set $S\subset \cP^\cov$ such that $\ell_S\geq \ell_T$ for every independent set $T\subset \cP^\cov$.  Observe that if $S$ is a highest cover set, then $S$ is a maximal independent subset of $\cP^\cov$; i.e., $S$ is not properly contained in any independent set.  This follows simply because any independent set $T$ with $T\supset S$ has $\ell_T \geq \ell_S$.  Consequently, if $S$ is a highest cover set, then every cover has either the same first coordinate or the same second coordinate as some element of $S$.   However, $S$ may not be the independent subset of $\cP^\cov$ with the greatest number of elements.

\def\highest{\mathrm{h}}
\def\lowest{\mathrm{lowest}}

\begin{lemma} If $\cP$ is a poset on $[n]$, then $\cP$ has a unique highest cover set.  
\end{lemma}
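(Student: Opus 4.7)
The plan is to handle existence and uniqueness separately. Existence is automatic: since $\cP^\cov$ is finite, the collection of independent subsets is finite, and a lexicographic maximum of the map $S \mapsto \ell_S$ exists.

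For uniqueness I would argue by contradiction via an exchange. Suppose $S$ and $T$ are both highest cover sets; applying the defining inequality in both directions gives $\ell_S = \ell_T$, hence $|S_d| = |T_d|$ for every $d$, where I write $X_d = \{(i,j) \in X : j - i = d\}$. Assuming $S \neq T$, I let $d^*$ be the largest integer with $S_{d^*} \neq T_{d^*}$ and fix $(i^*, j^*) \in S_{d^*} \setminus T_{d^*}$ (nonempty since $|S_{d^*}| = |T_{d^*}|$). Let $R \subset T$ consist of those elements sharing a row or column with $(i^*, j^*)$; since $T$ is independent, $|R| \leq 2$. Define $T' = (T \setminus R) \cup \{(i^*, j^*)\}$, which is an independent subset of $\cP^\cov$ since we removed every potential conflict with $(i^*, j^*)$.

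The crux, and the only real obstacle, is the claim that every element of $R$ has length strictly less than $d^*$; given this, the exchange strictly increases $\ell$. To see it, suppose $(i^*, j') \in R$ (the row case). If $j' - i^* > d^*$, then by the maximality of $d^*$ we have $T_{j'-i^*} = S_{j'-i^*}$, so $(i^*, j') \in S$; but $(i^*, j^*) \in S$ also lies in row $i^*$, contradicting independence of $S$. If $j' - i^* = d^*$, then $j' = j^*$ and $(i^*, j^*) \in T$, contradicting the choice of $(i^*, j^*)$. The column case is symmetric. Comparing the sorted sequences is now straightforward: $\ell_{T'}$ and $\ell_T$ agree on entries exceeding $d^*$ (because elements of $R$ all have length $< d^*$, so $T'_d = T_d$ for $d > d^*$), but $\ell_{T'}$ has exactly one additional entry equal to $d^*$. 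Thus at the position immediately past the block of $d^*$-entries in $\ell_T$, the sequence $\ell_{T'}$ still reads $d^*$ while $\ell_T$ has already dropped strictly below $d^*$. Therefore $\ell_{T'} > \ell_T$ lexicographically, contradicting the maximality of $T$, and forcing $S = T$.
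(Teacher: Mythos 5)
Your argument is correct, and it takes a genuinely different route from the paper's. The paper proves the lemma by strong induction on $|\cP^\cov|$: it collects the set $H$ of covers of maximal length $j-i$, observes $H$ is independent, strips from $\cP$ every cover sharing a row or column with an element of $H$, and appeals to the induction hypothesis on the resulting smaller poset; the unique highest cover set is then $H$ together with the recursive answer. That structure simultaneously proves uniqueness and produces the greedy algorithm stated afterward. You instead dispose of existence by finiteness and prove uniqueness by a direct exchange: if $S\neq T$ are both maximal, pick the largest length $d^*$ where they differ, move $(i^*,j^*)\in S_{d^*}\setminus T_{d^*}$ into $T$ after evicting the (at most two) conflicting elements $R\subset T$, and show $\ell_{T'}>\ell_T$. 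The pivotal observation that every element of $R$ has length strictly below $d^*$ is exactly right, and your case analysis (length $>d^*$ contradicts $S_{d}=T_{d}$ plus independence of $S$; length $=d^*$ forces $(i^*,j^*)\in T$) is airtight, as is the lexicographic comparison once you note $T'_d=T_d$ for $d>d^*$ and $|T'_{d^*}|=|T_{d^*}|+1$. The trade-off: your argument is self-contained and avoids induction, but unlike the paper's proof it does not by itself exhibit the greedy construction of $\cP^\cov_\highest$, which the paper wants anyway for its algorithm and for Corollary \ref{chain}.
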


We denote the unique highest cover set of $\cP$ by $\cP^\cov_{\highest}$.

\begin{proof}  We induct on the number of elements of $\cP^\cov$.  If $\cP^\cov$ is empty, then $\cP^\cov_{\highest} = \varnothing$ is clearly unique.  Suppose $\cP^\cov$ is not empty, but that if $\cQ$ is any poset on $[n]$ with fewer covers than $\cP$, then $\cQ$ has a unique highest  cover set.
Let $H = \{(i_1,j_1),\dots,(i_s,j_s)\}$ be the set of elements of $\cP^\cov$ with $j_1-i_1 = \dots = j_s-i_s = \max \{ j-i : (i,j) \in \cP^\cov\}$.  Then necessarily the numbers $i_t$ are all distinct and the  numbers $j_t$ are all distinct, so $H$ is independent.  Remove from $\cP$ all elements $(i,j)$ with $i=i_t$ or $j=j_t$ for some $t$, and call the resulting set $\cQ$.  Then $\cQ$ is a poset on $[n]$ with fewer covers than $\cP$, so $\cQ$ has a unique highest cover set $\cQ^\cov_{\highest}$.  The set $S =H \cup \cQ^\cov_{\highest}$ is then an independent subset of $\cP^\cov$.  We claim that it is the unique highest  cover set of $\cP$.  To prove this, suppose $T \subset \cP^\cov$ is an independent subset with $\ell_T \geq \ell_S$.  It suffices to show that $S=T$.  Clearly if $\ell_T \geq \ell_S$ then $H \subset T$, since otherwise one of the first $s$ coordinates of $\ell_T$ would be less than $j_1-i_1 = \dots = j_s-i_s$.   But if $H \subset T$, then $T - H$ is an independent subset of $\cQ^\cov$ with $\ell_{T-H} \geq \ell_{S-H} = \ell_{\cQ^\cov_{\highest}}$, so $T-H = \cQ^\cov_{\highest}$ by hypothesis and $T = S$, as desired. 
\end{proof}

The proof of this lemma shows that to  one can form $\cP^\cov_{\highest}$ by the following algorithm:

\begin{enumerate}
\item[] Input: $\cP$, a poset on $[n]$.
\item[] Instructions: \begin{enumerate}
\item[1.] Set $\tt{S} = \cP^\cov$
\item[2.] Choose an element $(i,k) \in \tt{S}$ such that $\displaystyle k-i = \max_{(x,y) \in \tt{S}} (y-x)$ and add $(i,k)$ to $\cP^\cov_{\highest}$.
\item[3.] Remove from $\tt{S}$ all elements of the form $(i,j)$ and $(j,k)$ for $1\leq j \leq n$.  
\item[4.] If $\tt{S}$ is nonempty, return to Step 2; otherwise, the algorithm terminates.
\end{enumerate} 
\item[] Output: $\cP^\cov_\highest$.
\end{enumerate}

\begin{example}
If $\cP$ is the poset on $n=6$ given by 
\[ \cP\ =\ \xy<0.25cm,1.1cm> \xymatrix@R=.3cm@C=.3cm{
 & 6 & \\
 & 4 \ar @{-} [u]  & 5 \ar @{-} [ul]  \\
 2\ar @{-} [uur]   &  & 3\ar @{-} [ul]   \ar @{-} [u]    \\
 & 1\ar @{-} [ul]  \ar @{-} [ur]   & 
}\endxy\] then $\cP^\cov_{\highest} = \{ (1,3), (2,6), (3,5)\}$.  Note that $\cP\not\vartriangleleft [[6]]$ since $(2,5)\notin \cP$ but $(3,4) \in \cP$.
\end{example}

By a \emph{chain}, we mean a subset of $[[n]]$ of the form $\{ (a_i, a_j) : 1\leq i <j \leq k\}$ where $1 \leq a_1 < \dots < a_k \leq n$.  
We say that poset $\cP$ \emph{decomposes into chains} if 
there exists a set partition $\lambda = \{\lambda_1,\dots,\lambda_\ell\}$ of $[n]$ such that $\cP$ is the union of the chains $\cC_i \overset{\mathrm{def}} = \{ (a,b) : a,b \in \lambda_i,\ a<b\}$ for $i=1,\dots,\ell$.
This is slightly stronger than saying that $\cP$ is a disjoint union of chains; for example, $\cP = \{ (1,2),(1,3)\}$ is a disjoint union of chains but does not decompose into chains.

\begin{corollary}\label{chain}
If $\cP$ is a poset on $[n]$, then $\cP^\cov = \cP^\cov_\highest$ if and only if $\cP$ decomposes into chains.
\end{corollary}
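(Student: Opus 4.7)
The plan is to reduce the corollary to the equivalent condition that $\cP^\cov$ is itself an independent set, and then to show that this condition is equivalent to $\cP$ decomposing into chains.

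For the first step, I would verify that $\cP^\cov = \cP^\cov_\highest$ if and only if $\cP^\cov$ is independent. The forward direction is immediate since $\cP^\cov_\highest$ is by definition independent. Conversely, if $\cP^\cov$ is independent, then $\cP^\cov$ is an independent subset of $\cP^\cov$; because the remark following the definition of highest cover sets notes that $\cP^\cov_\highest$ is maximal among independent subsets of $\cP^\cov$, and $\cP^\cov$ is not properly contained in any subset of itself, we conclude $\cP^\cov_\highest = \cP^\cov$.

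For the second step, I would show that $\cP^\cov$ is independent if and only if $\cP$ decomposes into chains. Independence of $\cP^\cov$ translates to the assertion that each vertex of the Hasse diagram of $\cP$ has at most one incoming edge and at most one outgoing edge. Since the Hasse diagram is acyclic, with edges oriented consistently with the natural order on $[n]$, this degree bound forces every weakly connected component to be a directed path $a_1 < a_2 < \cdots < a_k$, with isolated vertices counted as trivial one-element paths. Letting $\{\lambda_i\}$ denote the vertex sets of these components produces a set partition of $[n]$; transitivity along each path yields $\cC_i \subset \cP$, while (\ref{covers2posets}) guarantees that any $(a,b) \in \cP$ factors through a sequence of covers that stays inside a single component, so $(a,b) \in \cC_i$ for some $i$. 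This gives $\cP = \bigcup_i \cC_i$. Conversely, if $\cP$ decomposes into chains via $\{\lambda_i\}$, a direct verification shows the covers of $\cP$ are exactly the successive pairs within the parts $\lambda_i$; disjointness of the parts together with the fact that within one part successive covers share only one endpoint (as the second coordinate of one and the first coordinate of the next) yields independence of $\cP^\cov$.

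The main obstacle is the forward direction of this second equivalence: one must extract a genuine chain decomposition of $\cP$ from the purely combinatorial hypothesis that no two covers share a coordinate. The crucial observation enabling this is that an acyclic digraph with both in-degree and out-degree bounded by one is precisely a disjoint union of directed paths (no undirected cycle can occur, since assigning a consistent orientation would force a directed cycle in the acyclic digraph). Once this is in hand, (\ref{covers2posets}) and a short routine check finish the argument.
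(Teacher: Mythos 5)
Your proposal is correct and takes essentially the same route as the paper: both reduce the statement to the equivalence between $\cP^\cov$ being independent and $\cP$ decomposing into chains. The only minor difference is that you derive ``$\cP^\cov = \cP^\cov_\highest$ implies $\cP^\cov$ independent'' from the maximality remark, whereas the paper observes it via the algorithm (no elements removed in step 3); both are valid, and your second step simply makes explicit the ``routine check'' the paper leaves to the reader.
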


\begin{proof}
If $\cP$ decomposes into chains then $\cP^\cov$ is independent and clearly the highest cover set.  Conversely, if $\cP^\cov = \cP^\cov_\highest$, then no elements are removed from $\tt{S}$ in step 3 of our algorithm, which means that no two covers of $\cP$ share the same first coordinate or same second coordinate.  Using this property and considering how $\cP$ is determined by its covers, one concludes that $\cP$ decomposes into chains.
\end{proof}

We can now describe a poset condition which characterizes the image of the map (\ref{partitions2posets}).  Given a poset $\cP$ on $[n]$, we say that a poset $\cQ$ on $[n]$ is $\cP$-\emph{representative} if $\cQ \subset \cP$ and the following conditions hold:
\begin{enumerate}
\item[(i)]  If $(i,k) \in \cQ^\cov_{\highest}$ and $(i,j) \in \cQ^\cov$ then $(j,k)\notin \cP$.
\item[(ii)] If $(i,k) \in \cQ^\cov_\highest$ and $(j,k) \in \cQ^\cov$ then $(i,j)\notin \cP$. 
\end{enumerate}
Before proceeding, we inspect what this definition means for the fundamental example $\cP = [[n]]$. 

\begin{proposition}
A poset $\cQ$ is $[[n]]$-representative if and only if $\cQ$ decomposes into chains.  
\end{proposition}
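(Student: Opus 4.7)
The plan is to combine Corollary \ref{chain} with the observation that $\cQ$ is $[[n]]$-representative if and only if $\cQ^\cov = \cQ^\cov_\highest$. For the \emph{if} direction, assume $\cQ$ decomposes into chains, so by Corollary \ref{chain} $\cQ^\cov$ is independent. Then the hypothesis of condition (i) (two distinct covers sharing a first coordinate) is never satisfied, and similarly for (ii). So both conditions hold vacuously, making $\cQ$ tautologically $[[n]]$-representative.

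For the \emph{only if} direction, suppose $\cQ$ is $[[n]]$-representative. I will prove $\cQ^\cov \subset \cQ^\cov_\highest$, which together with the reverse inclusion and Corollary \ref{chain} finishes the argument. Assume for contradiction there exists $(a,b) \in \cQ^\cov \setminus \cQ^\cov_\highest$. Since $\cQ^\cov_\highest$ is a maximal independent subset of $\cQ^\cov$, some $(x,y) \in \cQ^\cov_\highest$ shares a coordinate with $(a,b)$, and I split on whether $x = a$ or $y = b$.

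In the case $(a,y) \in \cQ^\cov_\highest$ with $y \neq b$, condition (i) (specialized to $\cP = [[n]]$) forces $(b,y) \notin [[n]]$, giving $y < b$. So $(a,y)$ has length strictly less than that of $(a,b)$, and the greedy algorithm defining $\cQ^\cov_\highest$ cannot pick $(a,y)$ while $(a,b)$ remains in $\tt{S}$. Hence $(a,b)$ must have been removed at some earlier step by an element $(p,q) \in \cQ^\cov_\highest$ with $p = a$ or $q = b$. The option $p = a$ would remove $(a,y)$ from $\tt{S}$ as well, contradicting $(a,y) \in \cQ^\cov_\highest$; so $q = b$, producing a second element $(p,b) \in \cQ^\cov_\highest$ with $p \neq a$. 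Now condition (ii) gives $p > a$, so $(p,b)$ also has length $b - p < b - a$; but at the step $(p,b)$ was picked, $(a,b)$ was still in $\tt{S}$ (about to be removed by the very choice of $(p,b)$) and had strictly greater length, contradicting the maximum-length property of the greedy choice. The other case $(x,b) \in \cQ^\cov_\highest$ with $x \neq a$ is treated by a completely symmetric argument: condition (ii) gives $x > a$, independence of $\cQ^\cov_\highest$ rules out the earlier removal of $(a,b)$ having $q = b$, so it must have come from some $(a,q) \in \cQ^\cov_\highest$, whereupon condition (i) yields $q < b$ and the same length-versus-algorithm contradiction appears.

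The principal care point will be keeping the chronology of $\tt{S}$ straight --- specifically, that each element of $\cQ^\cov_\highest$ remains in $\tt{S}$ until it is picked, and that removing $(a,b)$ by an element sharing the ``other'' coordinate from $(x,y)$ conflicts with the independence of $\cQ^\cov_\highest$. Once these routine bookkeeping observations are in place, the contradiction reduces to the two simple length inequalities delivered by conditions (i) and (ii): every element of $\cQ^\cov_\highest$ sharing a coordinate with $(a,b)$ is forced to be shorter than $(a,b)$, which is incompatible with the greedy algorithm picking it while $(a,b)$ (or one of its conjugate obstructions) is still available.
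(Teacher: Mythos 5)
Your proof is correct, and it uses the same essential ingredients as the paper: the greedy algorithm constructing $\cQ^\cov_\highest$, Corollary \ref{chain}, and the observation that conditions (i) and (ii) with $\cP=[[n]]$ force any cover sharing a coordinate with a highest cover to be \emph{shorter} than it. However, the paper's argument for the forward direction is considerably more direct and avoids your bookkeeping: rather than starting from a hypothetical $(a,b)\in\cQ^\cov\setminus\cQ^\cov_\highest$ and replaying the algorithm's history, the paper argues at each step of the algorithm that if some cover $(i,j)$ or $(j,k)$ other than the chosen $(i,k)$ were removed in Step~3, then by the maximum-length choice in Step~2 that removed cover would be shorter, and hence (i) or (ii) would fail. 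This shows directly that only the chosen cover is ever removed, so every cover gets chosen and $\cQ^\cov_\highest=\cQ^\cov$. Your indirect route reaches the same conclusion but has to invoke maximality of $\cQ^\cov_\highest$ as an independent set, track the chronology of $\tt{S}$, and split into two cases, each with a nested removal sub-analysis. One small imprecision in your ``if'' direction: the hypothesis of condition (i) does not require two \emph{distinct} covers, so the condition does not quite ``hold vacuously'' — when $(i,j)=(i,k)$ the conclusion $(k,k)\notin[[n]]$ holds trivially, and when $(i,j)\neq(i,k)$ independence makes the implication vacuous. Either way the conditions hold, so this doesn't affect correctness.
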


\begin{proof} 
If $\cQ$ is $[[n]]$-representative then no elements can be removed from $\tt{S}$ in step 3 of our algorithm for constructing $\cQ^\cov_\highest$ since this would cause either (i) or (ii) to fail in our definition of $\cP$-representative.  Therefore $\cQ^\cov_\highest = \cQ^\cov$ so $\cQ$ decomposes into chains by Corollary \ref{chain}.  The converse is immediate.
\end{proof}

Each $[[n]]$-representative poset thus naturally corresponds to a set partition of $[n]$: namely, the one whose parts are the connected components of the Hasse diagram of $\cQ$.  
  Hence the set of $[[n]]$-representative $\FF_q$-labeled posets indexes the set of supercharacters of $U_n$.  In fact, we can say something much more general.
  
\begin{theorem}\label{bijections}
Fix a poset $\cP\vartriangleleft [[n]]$.  Then the maps
\[\label{2bijections}
\barr{ccccc}
\Bigl\{
\text{Supercharacters of $U_\cP$}
\Bigr\} 
& 
\to 
& 
\left\{
(\lambda, \eta) : \barr{l} \text{$\lambda \in \sP^*_\cP$ and $\eta \in \fkn_\cP^*$,} \\ \text{$\supp(\eta) \subset \coaux_\cP(\lambda)$} \earr
\right\} 
& 
\to 
&
\left\{ \barr{c} 
\text{$\cP$-representative} \\ \text{$\FF_q$-labeled posets} \earr \right\}
\\ 
\chi_\cP^{(\lambda,\eta)} 
&
\mapsto
&
(\lambda, \eta)
&
\mapsto
&
\cP^{(\lambda,\eta)}
\earr\] are bijections.
\end{theorem}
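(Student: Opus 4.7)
By Theorem~\ref{co-normal-reps}, the first map in the diagram inverts a bijection, so the task reduces to showing that the second map $\Phi\colon(\lambda,\eta)\mapsto\cP^{(\lambda,\eta)}$ is a bijection onto the set of $\cP$-representative $\FF_q$-labeled posets. The cornerstone of the argument is the identification $\supp(\lambda) = \(\cP^{(\lambda,\eta)}\)^\cov_{\highest}$ for every valid pair $(\lambda,\eta)$, which I would establish first.

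To prove this identification I would trace through the algorithm for constructing $\cP^\cov_{\highest}$ applied to the set of covers $\supp(\lambda + \eta)$. The key observation is that any $(i,j) \in \supp(\eta) \subset \coaux_\cP(\lambda)$ lies in $\coadj_{[[n]]}^{\mathrm{L}}(\lambda) \cup \coadj_{[[n]]}^{\mathrm{R}}(\lambda)$, and the defining witness is an element of $\supp(\lambda)$ sharing a coordinate with $(i,j)$ but with strictly larger difference $j-i$. A short induction---using that $\supp(\lambda)$ is an independent set because $\lambda$ is a set partition---then shows that the algorithm selects only from $\supp(\lambda)$ at each step, and never removes an element of $\supp(\lambda)$ until it is itself selected.

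Well-definedness and injectivity of $\Phi$ now follow quickly. For condition~(i) of $\cP$-representativity, suppose $(i,k) \in \supp(\lambda)$ and $(i,j) \in \supp(\lambda+\eta)$ is a different cover. Then $(i,j) \in \supp(\eta)$ since $\lambda$ is a set partition, and because $(i,k)$ is the unique element of $\supp(\lambda)$ in row $i$, the failure of $(i,j) \in \coadj_\cP^{\mathrm{R}}(\lambda)$ specializes to $(j,k) \notin \cP$ when $j < k$; the case $j > k$ is vacuous since then $(j,k) \notin [[n]]$. Condition~(ii) is symmetric. Injectivity is immediate: $\cP^{(\lambda,\eta)}$ determines $\supp(\lambda)$ as its highest cover set, $\supp(\eta)$ as the remaining covers, and all labels via~\eqref{labeling}.

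Surjectivity is the main remaining task and the principal obstacle. Given a $\cP$-representative labeled poset $\cQ$, I would define $\lambda$ and $\eta$ by transferring the labels of $\cQ^\cov_{\highest}$ and $\cQ^\cov \setminus \cQ^\cov_{\highest}$ respectively, and the substantive content is verifying that $\supp(\eta) \subset \coaux_\cP(\lambda)$. For each $(i,j) \in \cQ^\cov \setminus \cQ^\cov_{\highest}$, maximality of $\cQ^\cov_{\highest}$ supplies some $(a,b) \in \cQ^\cov_{\highest}$ sharing a coordinate with $(i,j)$; when this element happens to lie on the ``wrong side'' of $(i,j)$ (so that it fails to directly witness membership in $\coadj_{[[n]]}$), a lexicographic swap argument---replacing $(a,b)$ in $\cQ^\cov_{\highest}$ with $(i,j)$ would otherwise contradict the maximality of $\ell_{\cQ^\cov_{\highest}}$---forces the existence of a second element of $\cQ^\cov_{\highest} \subset \supp(\lambda)$ that does provide a direct witness for $(i,j) \in \coadj_{[[n]]}(\lambda)$. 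Conditions (i) and (ii) for $\cQ$ then rule out $(i,j) \in \coadj_\cP(\lambda)$, yielding $(i,j) \in \coaux_\cP(\lambda)$. Once $(\lambda,\eta)$ is confirmed as a valid pair, Proposition~\ref{covers-prop} and~\eqref{labeling} give that $\cP^{(\lambda,\eta)}$ has the same covers and labels as $\cQ$, hence $\Phi(\lambda,\eta) = \cQ$.
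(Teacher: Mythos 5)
Your proposal follows essentially the same route as the paper: treat the first map via Theorem~\ref{co-normal-reps}, then construct the inverse to the second map by reading off $\lambda$ from the highest cover set of the poset and $\eta$ from the remaining covers, using the lexicographic maximality of $\cQ^\cov_\highest$ to verify that $\supp(\eta)\subset\coaux_\cP(\lambda)$. In fact you are somewhat more explicit than the paper in two places: you prove the key identification $\supp(\lambda)=\(\cP^{(\lambda,\eta)}\)^\cov_\highest$ up front by tracing the algorithm (the paper only remarks after the proof that this follows), and you directly check that $\cP^{(\lambda,\eta)}$ satisfies conditions (i) and (ii) of $\cP$-representativity and that the second map is injective, whereas the paper folds these into the claim that the constructed map is a two-sided inverse. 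One small caution on the surjectivity step: your ``lexicographic swap'' is phrased as \emph{replacing} the ill-placed cover $(a,b)$ in $\cQ^\cov_\highest$ by $(i,j)$, and concluding from the failure of independence that there is a second cover of $\cQ^\cov_\highest$ witnessing $(i,j)\in\coadj_{[[n]]}(\lambda)$. As stated this does not quite close, because the second cover sharing a coordinate with $(i,j)$ could also lie on the wrong side. The paper sidesteps this by assuming for contradiction that \emph{every} element of $\cQ^\cov_\highest$ in the same row or column as $(j,k)$ has strictly smaller difference, and then adjoining $(j,k)$ to $T_0=\{(j',k')\in\cQ^\cov_\highest : k'-j'\ge k-j\}$; the contrary hypothesis guarantees $T_0\cup\{(j,k)\}$ is independent and $\ell_{T_0\cup\{(j,k)\}}>\ell_{\cQ^\cov_\highest}$, a contradiction. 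This ``add rather than swap'' formulation is the only repair your argument needs; otherwise the strategy is sound and matches the paper's.
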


\begin{remark}
While our definitions make sense in greater generality, the combinatorial picture drawn by this theorem certainly may fail to hold if $U_\cP$ is not normal in $U_n$.  For example, consider the posets  $\cP_{(i)} \subset [[n]]$ of the form
\[  \cP_{(i)}\ \ =\ \ \xy<0.25cm,2.2cm> \xymatrix@R=.3cm@C=.3cm{
& n \\
& \vdots \ar @{-} [u]     \\
& i+1 \ar @{-} [u]     \\ 
1 \ar @{-} [ur] & i \ar @{-} [u]     \\
& \vdots \ar @{-} [u]     \\
 & 2\ar @{-} [u]     
}\endxy\] which Thiem and Venkateswaran study in \cite{TV09}.  For $2<i<n$, we have $\cP_{(i)} \not\vartriangleleft [[n]]$, and in this case the set of $\cP_{(i)}$-representative posets is not in bijection with the set of supercharacters of the corresponding pattern group.
\end{remark}

\begin{proof}
That the first map is a bijection is the statement of Theorem \ref{co-normal-reps}.  To prove that the second map is a bijection, let $\cQ$ be a $\cP$-representative $\FF_q$-labeled poset, and define $\lambda_\cQ,  \eta_\cQ \in \fkn_\cP^*$ by 
\[\label{lambda_Q} \lambda_\cQ = \sum_{(i,j) \in \cQ_\highest^\cov} \cQ_{ij} e_{ij}^*\qquad\text{and}\qquad \eta_\cQ = \sum_{(i,j) \in \cQ^\cov - \cQ_\highest^\cov} \cQ_{ij} e_{ij}^*.\]  We claim that the map 
$ \cQ \mapsto(\lambda_\cQ, \eta_\cQ)$ is the inverse of the second map in the theorem statement.  The difficulty here is just to show that $(\lambda_\cQ,\eta_\cQ)$ lies in the map's domain.  In this direction, we first note that since $\cQ^\cov_\highest$ is an independent subset of $\cQ^\cov$, $\supp(\lambda_\cQ)$ has at most one position in each row and column, so $\lambda_\cQ \in \sP_\cP^*$ is a labeled set partition.  

We want to show that $\supp(\eta_\cQ) \subset \coaux_\cP(\lambda_\cQ)$.  For this, fix some $(j,k) \in \supp(\eta_\cQ) = \cQ^\cov - \cQ^\cov_\highest$.  Since $\supp(\lambda_\cQ) = \cQ^\cov_\highest$, there is either some $(i,k) \in \supp(\lambda_\cQ)$ or some $(j,l) \in \supp(\lambda_\cQ)$, and we want to show that in the first case $i<j$ and in the second case $k<l$.  To this end, suppose the contrary: that any element of $\supp(\lambda_\cQ)$ in the same row or column as $(j,k)$ lies strictly below or to the left of $(j,k)$.  We derive a contradiction as follows.  Let $T_0$ be the set of positions $(j',k') \in \cQ^\cov_\highest$ with $k'-j' \geq k-j$, and observe that $T = T_0 \cup \{(j,k)\}$ remains an independent set by our contrary assumption.  Writing $S = \cQ^\cov_\highest$, we then have the contradiction  $\ell_T > \ell_S$, since if $|T_0| = r$, then by construction $\ell_{T,i} = \ell_{S,i}$ for $i\leq r$ and $\ell_{T,r+1} = k-j > \ell_{S,r+1}$.  

This proves that there is either some $(i,k) \in \supp(\lambda_\cQ)$ with $i<j$ or some $(j,l) \in \supp(\lambda_\cQ)$ with $k<l$.  Hence by (\ref{coadj}) we have $(j,k) \in \coadj_{[[n]]}(\lambda_\cQ)$.  Furthermore, by the definition of $\cP$-representative and (\ref{coadj}), in the first case we have $(i,j)\notin \cP$ so $(j,k)\notin \coadj_{\cP}^{\mathrm L}(\lambda_\cQ)$ and in the second case we have $(k,l)\notin \cP$ so $(j,k)\notin \coadj_{\cP}^{\mathrm R}(\lambda_\cQ)$.  Therefore $(j,k) \notin \coadj_\cP(\lambda_\cQ)$, so $(j,k) \in \coaux_\cP(\lambda_\cQ)$ and more generally $\supp(\eta_\cQ)\subset \coaux_\cP(\lambda_\cQ)$.  

Thus, $(\lambda_\cQ,\eta_\cQ)$ lies is the domain of the second map in the theorem, and clearly its image under this map is $\cP^{(\lambda_\cQ,\eta_\cQ)} = \cQ$.  Hence  $\cQ \mapsto (\lambda_\cQ,\eta_\cQ)$ gives a well-defined inverse to the second map, so this map is   a bijection.
\end{proof}

The preceding proof shows that if $\lambda \in \sP_\cP^*$ and $\eta\in \fkn_\cP^*$ has $\supp(\eta)\subset \coaux_\cP(\lambda)$, then $\supp(\lambda)$ is equal to the highest cover set of $\cP^{(\lambda,\eta)}$.  
Thus, if $\chi_\cQ$ is the supercharacter corresponding to a $\cP$-representative poset $\cQ$, then there is a unique  $\lambda \in \sP_\cP^*$ with $\supp(\lambda) = \cQ^\cov_\highest$ and
\[ \chi_\cQ(1) = q^{|\coadj_\cP^{\mathrm{L}}(\lambda)|} = q^{|\coadj_\cP^{\mathrm{R}}(\lambda)|}\qquad\text{and}\qquad \left\langle \chi_\cQ, \chi_\cQ \right \rangle_{U_\cP} = q^{|\coadj_\cP^{\mathrm L}(\lambda) \cap \coadj_\cP^{\mathrm R}(\lambda)|}.\] In particular,
 we can restate the irreducibility criterion given by Corollary \ref{irred-criterion} as a condition on the $\cP$-representative poset corresponding to a given supercharacter.

\begin{corollary}\label{irred-criterion2}
Let $\cP \vartriangleleft [[n]]$.  Choose a supercharacter $\chi$ of $U_\cP$ and suppose $\chi$ corresponds to the $\cP$-representative $\FF_q$-labeled poset $\cQ$ under the map (\ref{2bijections}).  Then $\chi$ is irreducible if and only if $(i,k),(j,l) \in \cQ^\cov_\highest$ implies $ \{ (i,j) ,(j,k),(k,l)\} \not\subset\cP$ whenever $i<j<k<l$.
\end{corollary}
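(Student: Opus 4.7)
The plan is to deduce this corollary directly from the earlier irreducibility criterion, Corollary \ref{irred-criterion}, by translating the condition ``$(i,k),(j,l) \in \supp(\lambda)$'' into the geometric language of the associated labeled poset $\cQ$. Essentially no new work is required, provided one identifies the correct $\lambda$.

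First I would invoke Theorem \ref{bijections}: the bijection between supercharacters of $U_\cP$ and $\cP$-representative $\FF_q$-labeled posets factors through the set of pairs $(\lambda,\eta)$ with $\lambda \in \sP_\cP^*$ and $\supp(\eta) \subset \coaux_\cP(\lambda)$. Hence there is a unique such pair with $\chi = \chi_\cP^{(\lambda,\eta)}$ and $\cQ = \cP^{(\lambda,\eta)}$. The key identification is the remark made just after the proof of Theorem \ref{bijections}, which states that $\supp(\lambda) = \cQ^\cov_\highest$. (If needed, one can re-derive this by noting that under the inverse map $\cQ \mapsto (\lambda_\cQ,\eta_\cQ)$ constructed in that proof, $\lambda_\cQ$ is supported precisely on $\cQ^\cov_\highest$.)

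Given this, the corollary follows by pure substitution: Corollary \ref{irred-criterion} says $\chi_\cP^{(\lambda,\eta)}$ is irreducible if and only if there do not exist $i<j<k<l$ with $(i,k),(j,l) \in \supp(\lambda)$ and $\{(i,j),(j,k),(k,l)\} \subset \cP$. Replacing $\supp(\lambda)$ by $\cQ^\cov_\highest$ yields exactly the condition stated in Corollary \ref{irred-criterion2}.

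There is essentially no obstacle; this is a restatement rather than a new theorem. The only point that warrants care is confirming the equality $\supp(\lambda) = \cQ^\cov_\highest$, but this was already observed explicitly in the discussion preceding the corollary, so the proof can safely consist of a single sentence that invokes Theorem \ref{bijections}, recalls this identification, and applies Corollary \ref{irred-criterion}.
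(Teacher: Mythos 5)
Your proposal is correct and follows exactly the route the paper intends: the paper gives no separate proof of this corollary precisely because it is a substitution of $\supp(\lambda) = \cQ^\cov_\highest$ (established in the paragraph immediately preceding the corollary) into Corollary \ref{irred-criterion}, via the bijection of Theorem \ref{bijections}. Nothing is missing.
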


As an additional corollary, we classify all degree one supercharacters of $U_\cP$ in terms of the corresponding $\cP$-representative posets.

\begin{corollary} Fix a poset $\cP \vartriangleleft[[n]]$.  
\begin{enumerate}
\item[(a)] The poset $\cP$, with any $\FF_q$-labeling, is itself $\cP$-representative and corresponds to a supercharacter of degree one under (\ref{2bijections}).  

\item[(b)] More generally, a supercharacter of $U_\cP$ has degree one if and only if it corresponds under (\ref{2bijections}) to a $\cP$-representative $\FF_q$-labeled poset $\cQ$ satisfying $\cQ^\cov \subset\cP^\cov$. 
\end{enumerate}
\end{corollary}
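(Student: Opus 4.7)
The plan is to reduce both parts to the degree formula of Theorem \ref{co-normal-reps}, which gives $\chi_\cP^{(\lambda,\eta)}(1)=q^{|\coadj_\cP^{\mathrm L}(\lambda)|}$. Since $\coadj_\cP^{\mathrm L}(\lambda)=\{(j,k)\in\cP: \exists (i,k)\in\supp(\lambda),\ (i,j)\in\cP\}$, the degree equals one precisely when no position in $\supp(\lambda)$ has a ``witness'' $j$ making $(i,j),(j,k)\in\cP$. In other words, the supercharacter $\chi_\cP^{(\lambda,\eta)}$ has degree one if and only if $\supp(\lambda)\subset\cP^\cov$.

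For part (a), I would first verify that $\cP$ itself (with any labeling) is $\cP$-representative. The containment $\cP\subset\cP$ is trivial, and both representability conditions hold for free: if $(i,j),(i,k)\in\cP^\cov$ with $j<k$, then $(j,k)\in\cP$ would combine with $(i,j)\in\cP$ to contradict $(i,k)\in\cP^\cov$, so condition (i) holds (and (ii) is symmetric). These arguments do not even use $\cP^\cov_{\highest}$ or normality. To identify the supercharacter corresponding to $\cP$ under (\ref{2bijections}), I invoke the inverse map from the proof of Theorem \ref{bijections}, which yields $(\lambda_\cP,\eta_\cP)$ with $\supp(\lambda_\cP)=\cP^\cov_{\highest}\subset\cP^\cov$. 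By the criterion in the first paragraph, the associated supercharacter has degree one.

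For part (b), one direction is immediate: if $\cQ^\cov\subset\cP^\cov$, then in particular $\supp(\lambda_\cQ)=\cQ^\cov_{\highest}\subset\cP^\cov$, and the degree criterion gives degree one. For the converse, assume $\chi_\cP^{(\lambda,\eta)}$ has degree one, so $\supp(\lambda)\subset\cP^\cov$ and $\coadj_\cP(\lambda)=\varnothing$. Since $\cQ^\cov=\supp(\lambda)\sqcup\supp(\eta)$, the remaining task—the main obstacle—is to show $\supp(\eta)\subset\cP^\cov$. Here I will use the normality of $\cP$ in $[[n]]$ essentially, which did not appear in the first direction.

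To handle the obstacle, fix $(j,k)\in\supp(\eta)\subset\coaux_\cP(\lambda)=\coadj_{[[n]]}(\lambda)$ (the second equality uses $\coadj_\cP(\lambda)=\varnothing$). By definition of $\coadj_{[[n]]}$, there exists either $(i,k)\in\supp(\lambda)$ with $i<j$ or $(j,l)\in\supp(\lambda)$ with $l>k$; by symmetry assume the first case. Now suppose for contradiction that $(j,k)\notin\cP^\cov$, so some $j<j'<k$ satisfies $(j,j'),(j',k)\in\cP$. Applying Lemma \ref{normal-in-U_n} (or the normality condition (\ref{normal-def})) to $(j,j')\in\cP$ with $i\le j<j'$, I get $(i,j')\in\cP$. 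Combined with $(j',k)\in\cP$, this contradicts $(i,k)\in\supp(\lambda)\subset\cP^\cov$. Hence $(j,k)\in\cP^\cov$, proving $\supp(\eta)\subset\cP^\cov$ and therefore $\cQ^\cov\subset\cP^\cov$, as required.
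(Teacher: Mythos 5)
Your proof is correct and follows essentially the same route as the paper: reduce to the degree formula of Theorem~\ref{co-normal-reps} to get the criterion $\supp(\lambda)\subset\cP^\cov$, and then in the converse direction of (b) use normality (Lemma~\ref{normal-in-U_n}) to promote $\supp(\eta)\subset\coadj_{[[n]]}(\lambda)$ to $\supp(\eta)\subset\cP^\cov$. Your explicit verification that $\cP$ itself is $\cP$-representative and your disjoint-union observation $\cQ^\cov=\supp(\lambda)\sqcup\supp(\eta)$ are small clarifications that the paper leaves implicit, but the underlying argument is the same.
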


\begin{proof}
Part (a) follows since, by the definition of a cover, $(i,k),(i,j) \in \cP^\cov \Rightarrow (j,k)\notin \cP$ and $(i,k),(j,k) \in \cP^\cov\Rightarrow (i,j)\notin\cP$.  

To prove part (b), we note that a supercharacter $\chi_\cP^{(\lambda,\eta)}$, where $\lambda \in \sP_\cP^*$ and $\supp(\eta) \subset \coaux_\cP(\lambda)$, has degree one if and only if \[\coadj_\cP^{\mathrm L}(\lambda) = \coadj_\cP^{\mathrm R}(\lambda) = \coadj_\cP(\lambda) = \varnothing.\]  By definition this occurs if and only if $\supp(\lambda) \subset \cP^\cov$.   We claim that in this case $\coaux_\cP(\lambda) \subset \cP^\cov$ as well.  To show this, observe that in this setup \[\coaux_\cP(\lambda) = \coadj_{[[n]]}(\lambda) = \coadj^{\mathrm L}_{[[n]]}(\lambda) \cup \coadj^{\mathrm R}_{[[n]]}(\lambda).\]  If $(j,k) \in \coadj_{[[n]]}^{\mathrm L}(\lambda)$ then there exists $(i,k) \in\supp(\lambda)$ with $i<j$.  If $(j,k)$ is not a cover of $\cP$, so that $(j,j'),(j',k) \in \cP$ for some $j'$, then by Lemma \ref{normal-in-U_n} we have $(i,j'),(j',k) \in \cP$, contradicting $(i,k) \in \cP^\cov$.  If $(j,k) \in \coadj_{[[n]]}^{\mathrm R}(\lambda)$ then it follows that $(j,k) \in \cP^\cov$ by a similar argument.  Hence $\cP^{(\lambda,\eta)} = \supp(\lambda+\eta) \subset \supp(\lambda)\cup \coaux_\cP(\lambda) \subset \cP^\cov$ if and only if $\chi_\cP^{(\lambda,\eta)}$ has degree one, which is the statement of (b).
\end{proof}

We conclude with two examples illustrating applications of Theorem \ref{bijections} to specific pattern groups.

\begin{example}
Suppose $\cP\vartriangleleft [[5]]$ is the poset given by 
 \[\cP\ =\ \xy<0.25cm,0.8cm> \xymatrix@R=.3cm@C=.3cm{
  & 5 \\
 4  & 3   \ar @{-} [u]  \\
2\ar @{-} [u]  \ar @{-} [uur]   & 1 \ar @{-} [ul]  \ar @{-} [u]
}\endxy\qquad\text{so that}\qquad U_\cP = \left\{
\(\barr{ccccc}
1 & 0 & a & b & c \\ & 1 & 0 & d& e \\ & & 1 & 0 & f \\ & & &  1 & 0 \\ & & & & 1 \earr\) : a,b,c,d,e,f \in \FF_q \right\}\]
in which case $U_\cP$ is the commutator subgroup of $U_{5}$.  Then the $\cP$-representative $\FF_q$-labeled posets are those of the following fifteen forms
\[\barr{c} 
\barr{ccccccccccccccc} 
\boxed{\xy<0.25cm,0.8cm> \xymatrix@R=.3cm@C=.3cm{
  & 5 \\
 2  & 4 \\
1  & 3
}\endxy} 
&
&\boxed{\xy<0.25cm,0.8cm> \xymatrix@R=.3cm@C=.3cm{
  & 5 \\
 3  & 4\\
1  \ar @{-} [u] & 2
}\endxy} 
&
& \boxed{\xy<0.25cm,0.8cm> \xymatrix@R=.3cm@C=.3cm{
  & 5 \\
 4  & 3\\
2  \ar @{-} [u] & 1
}\endxy} 
&
&\boxed{\xy<0.25cm,0.8cm> \xymatrix@R=.3cm@C=.3cm{
  & 4 \\
 5  & 2\\
3  \ar @{-} [u] & 1
}\endxy} 
&
&\boxed{\xy<0.25cm,0.8cm> \xymatrix@R=.3cm@C=.3cm{
  & 5 \\
 3  & 4\\
1  \ar @{-} [u] & 2  \ar @{-} [u]
}\endxy} 
&
&\boxed{\xy<0.25cm,0.8cm> \xymatrix@R=.3cm@C=.3cm{
  & 1 \\
 4  & 5\\
2  \ar @{-} [u] & 3  \ar @{-} [u]
}\endxy} 
&
&\boxed{\xy<0.25cm,0.8cm> \xymatrix@R=.3cm@C=.3cm{
  & 5 \\
 4  & 3\ar @{-} [u] \\
2   & 1  \ar @{-} [u]
}\endxy} 
&
&\boxed{\xy<0.25cm,0.8cm> \xymatrix@R=.3cm@C=.3cm{
  & 5 \\
 4  & 3\ar @{-} [u] \\
2\ar @{-} [u]   & 1  \ar @{-} [u]
}\endxy} \earr
\\ \\
\barr{ccccccccccccc}
\boxed{\xy<0.25cm,0.8cm> \xymatrix@R=.3cm@C=.3cm{
  & 5 \\
 3  & 4  \\
1\ar @{.} [u]  \ar @{-} [ur]   & 2  \ar @{.} [u]
}\endxy} 
&
&
\boxed{\xy<0.25cm,0.8cm> \xymatrix@R=.3cm@C=.3cm{
  & 3 \\
 4  & 5  \\
1\ar @{.} [u]  \ar @{-} [ur]   & 2  \ar @{.} [u]
}\endxy} 
&
&
\boxed{\xy<0.25cm,0.8cm> \xymatrix@R=.3cm@C=.3cm{
  & 1 \\
 4  & 5  \\
2\ar @{.} [u]  \ar @{-} [ur]   & 3  \ar @{.} [u]
}\endxy} 
&
&
\boxed{\xy<0.25cm,0.8cm> \xymatrix@R=.3cm@C=.3cm{
 & 5   \\
 4 & 3\ar @{-} [u]     \\
2  \ar @{.} [u] & 1\ar @{.} [u]  \ar @{-} [ul]   
}\endxy} 
&
&
\boxed{\xy<0.25cm,0.8cm> \xymatrix@R=.3cm@C=.3cm{
  &  5 \\
 4&  3\ar @{.} [u]    \\
2   \ar @{.} [u]  \ar @{-} [uur] & 1\ar @{-} [u]     
}\endxy} 
&
&
\boxed{\xy<0.25cm,0.8cm> \xymatrix@R=.3cm@C=.3cm{
  & 3 \\
 4  & 5  \\
1\ar @{.} [u]  \ar @{-} [ur]   & 2 \ar @{-} [ul]  \ar @{.} [u]
}\endxy} 
&
&
\boxed{\xy<0.25cm,0.8cm> \xymatrix@R=.3cm@C=.3cm{
  & 5 \\
 4  & 3   \ar @{.} [u]  \\
2\ar @{.} [u]  \ar @{-} [uur]   & 1 \ar @{-} [ul]  \ar @{.} [u]
}\endxy} \earr
\earr\]
where both solid and dashed lines are labeled by elements of $\FF_q^\times$, but dashed lines are optional and may be omitted.  (Alternatively, we can think of dashed lines as being labeled by elements of $\FF_q$ rather than $\FF_q^\times$.)  In each of these diagrams, the solid lines indicate the elements of the given poset's highest cover set.
Counting the number of such labelings, it follows by Theorem \ref{bijections} that the number of supercharacters and superclasses of $U_\cP$ is 
\[ 1 + 3(q-1) + 3(q-1)^2 + (q-1)^3 + 3 q^2 (q-1) + 3q^2(q-1)^2 + q^3(q-1)^2\] which simplifies to $q^3(q^2 + 1)(q-1)$.  All of these representative posets satisfy the condition in Corollary \ref{irred-criterion2}, so every supercharacter of $U_\cP$ is irreducible and $q^3(q^2 + 1)(q-1)$ is the number of irreducible characters and conjugacy classes of the group.
\end{example}

As our second example, we index the supercharacters of a family of pattern groups.

\begin{example}
Given nonnegative integers $m,n$, let $\cT = \cT(m,n) \vartriangleleft[[m+n]]$ denote the poset given by 
 \[\cT(m,n)\ =\ \xy<0.25cm,1.2cm> \xymatrix@R=.3cm@C=.3cm{
  m+1 & m+2 & \cdots & m+n \\
  m  \ar @{-} [u]  \ar @{-} [ur] \ar @{-} [urrr] &  &  &  \\
 \vdots \ar @{-} [u]    \\
1\ar @{-} [u]   
}\endxy
\quad\text{so that}\quad U_\cT = \left\{
\(\barr{cc}
u & a \\ 0 & I_n\earr\) : u \in U_m,\ a \in \FF_q^{m\times n} \right\}.
\] This pattern group is isomorphic to the semidirect product of $U_m$ and the additive group $\FF_q^{m\times n}$ of $m\times n$ matrices over $\FF_q$, with respect to the natural action of $U_m$ on $\FF_q^{m\times n}$ by left multiplication.  

Using Theorems \ref{co-normal-reps} and \ref{bijections}, it follows that a subposet $\cP$ is $\cT$-representative if and only if $\cP$ is of the form 
\[\cP \ = \ {\xy<0.25cm,1.6cm> \xymatrix@R=.3cm@C=.3cm{
  \ell_1^{(1)} & \ell_2^{(1)} & \cdots & \ell_{k_1}^{(1)} \\
  v_{j_1}^{(1)}  \ar @{-} [u]  \ar @{-} [ur] \ar @{-} [urrr] &  &  &  \\
 \vdots \ar @{-} [u]    \\
 v_1^{(1)}\ar @{-} [u]   
}\endxy\quad 
\xy<0.25cm,1.6cm> \xymatrix@R=.3cm@C=.3cm{
  \ell_1^{(2)} & \ell_2^{(2)} & \cdots & \ell_{k_2}^{(2)} \\
  v_{j_2}^{(2)}  \ar @{-} [u]  \ar @{-} [ur] \ar @{-} [urrr] &  &  &  \\
 \vdots \ar @{-} [u]    \\
 v_1^{(2)}\ar @{-} [u]   
}\endxy
\quad \cdots\quad \quad 
\xy<0.25cm,1.6cm> \xymatrix@R=.3cm@C=.3cm{
  \ell_1^{(r)} & \ell_2^{(r)} & \cdots & \ell_{k_r}^{(r)} \\
  v_{j_r}^{(r)}  \ar @{-} [u]  \ar @{-} [ur] \ar @{-} [urrr] &  &  &  \\
 \vdots \ar @{-} [u]    \\
 v_1^{(r)}\ar @{-} [u]   
}\endxy}
\] for some nonnegative integers $r$, $j_1,\dots,j_r$, $k_1,\dots,k_r$, where the following conditions hold:
\begin{enumerate}
\item[(a)] $v_j^{(i)} \in \{1,\dots, m\}$ and $\ell_{j}^{(i)} \in \{m+1,\dots,m+n\}$ for each $i,j$.
\item[(b)] The vertices $v_j^{(i)}$ are distinct for all $i,j$.
\item[(c)]  $\ell_{j}^{(i)} \neq \max\left\{ \ell_{1}^{(i')}, \dots, \ell_{k_{i'}}^{(i')} \right\}$ if $i\neq i'$ and $v_{j_i}^{(i)} > v_{j_{i'}}^{(i')}$.
\end{enumerate}  Notably, the vertices $\ell_j^{(i)}$ in the Hasse diagram of $\cP$ may coincide, subject to condition (c).  For example, if $m=5$ and $n=3$ and we have two posets   
\[ \cP_1 \ =  \ \xy<0.25cm,.75cm> \xymatrix@R=.3cm@C=.3cm{
  8& 7 & 6 \\
  3  \ar @{-} [u]  \ar @{-} [ur]  &4\ar @{-} [u] \ar @{-} [ur] &    \\
 1\ar @{-} [u] & 2 \ar @{-} [u]  & 5
}\endxy\qquad \text{and}\qquad
\cP_2 \ =  \ \xy<0.25cm,.75cm> \xymatrix@R=.3cm@C=.3cm{
  8& 7 & 6 \\
  4  \ar @{-} [u]  \ar @{-} [ur]  &3\ar @{-} [u] \ar @{-} [ur] &    \\
 1\ar @{-} [u] & 2 \ar @{-} [u]  & 5
}\endxy\]  then $\cP_1$ is $\cT$-representative while $\cP_2$ is not $\cT$-representative. 
\end{example}

\end{document}